\newtheorem{MainTheo}{Theorem}
\newtheorem{theorem}{Theorem}[section]
\newtheorem{lemma}[theorem]{Lemma}
\theoremstyle{definition}
\newtheorem{definition}[theorem]{Definition}
\newtheorem{example}[theorem]{Example}
\theoremstyle{remark}
\newtheorem{remark}[theorem]{Remark}
\numberwithin{equation}{section}
\newenvironment{customthm}[1]
  {\innercustomthm}
  {\endinnercustomthm}
\newcommand{\bfun}{o^*}
\newcommand{\se}[2]{\leftindex[I]^{#2}#1}
\newcommand{\odh}{\mathcal D}
\newcommand{\ignore}[1]{{\color{gray}[Skipped text]}}
\newcommand{\pref}[1]{(\ref{#1})}
\newcommand{\ns}{\oplus}
\newcommand{\col}{:}
\newcommand{\ofun}{O}
\newcommand{\mco}[1]{#1^*}
\newcommand{\us}{_}
\newcommand{\baseB}[1]{{\rm base}_{ B}(#1)}
\newcommand{\basep}[2]{{\rm base}_{#1}(#2)}
\newcommand{\Base}{{\rm Base}}
\newcommand{\coeffs}[2]{{\rm Digit}_{#1}(#2)}
\newcommand{\chgbases}[2]{ 
\left ( \begin{smallmatrix}
  #2\\
  #1
\end{smallmatrix}
\right )
}
\newcommand{\Chgbases}[2]{ 
\langle \begin{smallmatrix}
  #2\\
  #1
\end{smallmatrix}
\rangle
}
\newcommand{\upgrade}[1]{{\uparrow}_{#1}}
\newcommand{\ug}{{\uparrow}}
\newtheorem{prop}[theorem]{Proposition}
\newtheorem{claim}[theorem]{Claim}
\newtheorem{proposition}[theorem]{Proposition}
\newtheorem{convention}[theorem]{Convention}
\DeclareFontFamily{U}{mathx}{\hyphenchar\font45}
\DeclareFontShape{U}{mathx}{m}{n}{
      <5> <6> <7> <8> <9> <10>
      <10.95> <12> <14.4> <17.28> <20.74> <24.88>
      mathx10
      }{}
\DeclareSymbolFont{mathx}{U}{mathx}{m}{n}
\DeclareMathAccent{\widecheck}{0}{mathx}{"71}
\DeclareMathAccent{\wideparen}{0}{mathx}{"75}
\newcommand{\fs}[2]{ #1 [ #2]}
\newcommand{\fsi}[2]{ {#1}\llbracket{#2}\rrbracket }
\newcommand{\goodp}[3]{{\G}^{#3}_{#2}(#1)}
\newcommand{\G}{\mathbb G}
\newlength{\eparindent}
\newlist{CasesA}{enumerate}{1}
\setlist[CasesA,1]{label={\sc Case \Alph*},wide, labelwidth=!, labelindent=0pt,listparindent=\eparindent}
\newlist{Cases}{enumerate}{9}
\setlist[Cases,1]{label={\sc Case {\rm \arabic*}},wide, labelwidth=!, labelindent=0pt,listparindent=\eparindent}
\setlist[Cases,2]{label*= {\rm .\arabic*},wide, labelwidth=!, labelindent=0pt,listparindent=\eparindent}
\setlist[Cases,3]{label*= {\rm .\arabic*},wide, labelwidth=!, labelindent=0pt,listparindent=\eparindent}
\setlist[Cases,4]{label*= {\rm .\arabic*},wide, labelwidth=!, labelindent=0pt,listparindent=\eparindent}
\setlist[Cases,5]{label*= {\rm .\arabic*},wide, labelwidth=!, labelindent=0pt,listparindent=\eparindent}
\setlist[Cases,6]{label*= {\rm .\arabic*},wide, labelwidth=!, labelindent=0pt,listparindent=\eparindent}
\setlist[Cases,7]{label*= {\rm .\arabic*},wide, labelwidth=!, labelindent=0pt,listparindent=\eparindent}
\setlist[Cases,8]{label*= {\rm .\arabic*},wide, labelwidth=!, labelindent=0pt,listparindent=\eparindent}
\setlist[Cases,9]{label*= {\rm .\arabic*},wide, labelwidth=!, labelindent=0pt,listparindent=\eparindent}
\newcommand{\begincases}{\begin{enumerate}[label*={\sc Case \arabic*},wide, labelwidth=!, labelindent=0pt]}
\newcommand{\begincasesast}{\begin{enumerate*}[label*={\sc Case \arabic*},labelwidth=!, labelindent=0pt]}
\newcommand{\beginclaims}{\begin{enumerate}[label*={\sc Claim },wide, labelwidth=!, labelindent=0pt]}
\newcommand{\begincasesa}{\begin{enumerate}[label={\sc Case ({\rm \roman*})},wide, labelwidth=!, labelindent=0pt]}
\newcommand{\beginsubcases}{\begin{enumerate}[label*= {\rm .\arabic*},wide, labelwidth=!, labelindent=0pt]}
\newcommand{\beginsubcasesast}{\begin{enumerate*}[label*= {\rm .\arabic*},wide, labelwidth=!, labelindent=0pt]}
\newcommand{\putaway}[1]{}
\newcommand{\ve}{\varepsilon}
\renewcommand{\phi}{{\overline{\varphi}}}
\newcommand{\Om}{{\Omega}}
\newcommand{\al}{{\alpha}}
\newcommand{\be}{{\beta}}
\newcommand{\ga}{\gamma}
\newcommand{\de}{\delta}
\newcommand{\la}{\lambda}
\newcommand{\om}{\omega}
\begin{document}

\title{The Fractal Goodstein Principle}

%    Information for first author
\author{David Fernández-Duque}
%    Address of record for the research reported here
\address{Department of Philosophy, University of Barcelona, Spain}
%    Current address
%\curraddr{Department of Mathematics WE16,
%Case Western Reserve University, Cleveland, Ohio 43403}
\email{fernandez-duque@ub.edu}
%    \thanks will become a 1st page footnote.
\thanks{The first author was supported by the Spanish Ministry of Science and Innovation grant PID2023-149556NB-I00.}

%    Information for second author
\author{Andreas Weiermann}
\address{Department of Mathematics WE16, University of Ghent, Belgium}
\email{andreas.weiermann@ugent.be}
%\thanks{Support information for the second author.}

%    General info
\subjclass[2020]{Primary 03F40, 03D20, 03D60}

%\date{January 1, 2001 and, in revised form, June 22, 2001.}

%\dedicatory{This paper is dedicated to our advisors.}

\keywords{Goodstein's theorem, G\"odel incompleteness, Howard-Bachmann ordinal, fast-growing functions}

\begin{abstract}
 The original Goodstein process is based on writing numbers in hereditary $b$-exponential normal form: each number $n$ is written in some base $b\geq 2$ as $n=b^ea+r$, with $e$ and $r$ iteratively being written in hereditary $b$-exponential normal form.
We define a new process which generalises the original by writing expressions in terms of a hierarchy of bases $B$, instead of a single base $b$.
In particular, the `digit' $a$ may itself be written with respect to a smaller base $b'$.
We show that this new process always terminates, but termination is independent of Kripke-Platek set theory, or other theories of Bachmann-Howard strength.
\end{abstract}

\maketitle

\section{Introduction}

Goodstein's principle~\cite{Goodsteinb} is a classic example of a `purely number-theoretic' result independent of Peano arithmetic ($\sf PA$)~\cite{Kirby}.
While the statement is relatively simple, the proof uses the well-foundedness of $\varepsilon_0$ \cite{Goodstein1944}, the proof-theoretic ordinal of $\sf PA$.
The principle is based on {\em hereditary exponential normal forms,} where\-by each natural number is written in some base $b\geq 2$ in the standard way, but then each exponent appearing in the expansion is also written in base $b$.
For example, we would write $6$ in base $2$ as $6=_2 2^2+2$.
The Goodstein process then proceeds by increasing the base by one, then subtracting one. If the first element in the sequence is $6$, we first increase its base by one, obtaining $30=_33^3+3$, so the second element in the sequence is $29$.
Despite initially growing rather quickly, this process eventually reaches zero, a fact not provable in $\sf PA$.

Several recent results have extended Goodstein's theorem in various ways \cite{FSGoodstein,FernandezWCiE,FernandezWWalk}, including variants independent of Kripke-Platek set theory ($\sf KP$) or other theories of Bachmann-Howard strength 
\cite{AraiWW,FernandezFastWalks}.
The latter are themselves based on functions not provably total in $\sf PA$, with the general philosophy being that they are fair game, as long as they are provably total in $\sf KP$.
But this begs the question: are such functions really necessary for Goodstein processes of high proof-theoretic strength?
May it be possible to squeeze some more power out of Goodstein's original construction?
A partial answer is `no', because the authors~\cite{FernandezWWalk} have shown that no way of writing numbers using elementary functions will lead to termination times larger than those given by Goodstein's original progression.
However, there is a way to circumvent this restriction.

Our first observation is that there is no need to always increase the base by one; the next base may be chosen to be much larger than the previous one.
However, simply replacing the base $b$ by some uniform $f(b)$ will not yield a principle of higher proof-theoretic strength, since the well-foundedness of $\varepsilon_0$ may still be applied.
This leads to our second observation: the new base for each $n \in \mathbb N$ could depend on $n$.
We may consider, say, replacing the base of $n$ by $n$ itself.
Such an approach will not lead to a uniform base being used at the next step, since following this rule would lead us to replacing the base of $n+1$ by $ n+1$, and thus every natural number would be regarded as a possible base at each stage.
This na\"ive idea would not quite work out, but we can follow an intermediate approach and create new bases only at `critical' numbers.
This will lead to a hierarchy of bases at each stage, rather than a single `global' base.
One can even imagine that for a number written as $b^ea+r$ in base $b$, the numbers $b,e,a,r$ may all be written in distinct bases, with {\em their} digits written in terms of even smaller bases, each performing its own Goodstein process, like a self-similar fractal made up of smaller copies of itself.
As we will see, this can be implemented in a   manner that yields a terminating sequence; in this way, we obtain a Goodstein principle very close to the classic version, but of much higher proof-theoretic strength.

Let us give a very brief description of $\sf KP$; see e.g.~\cite{Barwise} for more details.
The theory $\sf KP$ (with infinity) is axiomatised by all axioms of $\sf ZFC$ except for powerset, but with separation restricted to $\Delta_0$ formulas and replacement restricted to $\Delta_0$-collection.
Roughly, new sets may only be defined in terms of the elements of previously defined sets.
Other theories of the same proof-theoretic strength as $\sf KP$ are the theory ${\sf ID}_1$ of non-iterated inductive definitions and ${\Pi}^1_1$-${\sf CA}^-_0$ of parameter-free ${\Pi}^1_1$ comprehension.
They share the proof-theoretic ordinal $\vartheta(\ve_{\Om+1})$, where $\Om$ is the first uncountable cardinal and $\vartheta \colon \ve_{\Om+1} \to \Om$ `collapses' possibly uncountable ordinals into countable ones~\cite{RathjenFragments}.
This theory is far more powerful than $\sf PA$ and certainly proves Goodstein's theorem, but not its fractal extension.

\section{Fractal Goodstein Processes}

Let us recall some notions from the classic Goodstein process.
Letting $b\in\mathbb N\setminus \{0,1\}$ and $0<n\in \mathbb N$, we define the {\em $b$-decomposition} of $n$ to be $b^ea+r $, where $a,e,r$ are uniquely chosen so that $0<a<b$ and $r<b^e$, in which case we write $n=_b b^ea+r $.
Technically, we think of the $b$-decomposition of $n$ as the quadruple $(b,e,a,r)\in\mathbb N^4$, but one could iteratively write  $e$ and $r$ using their own $b$-decompositions, leading to Goodstein's hereditary exponential normal forms.
Note that $=_b$ is not symmetric, i.e.~$\tilde b^{\tilde e} \tilde a +\tilde r =_B b^ea+r$ does not imply $b^ea+r  =_b  \tilde b^{\tilde e} \tilde a +\tilde r$; only the right-hand side needs to be a $b$-decomposition.
The number $a$ is regarded as a `digit,' since it is less than the base.
Formally, for $b > 0$, we say that $c$ is a {\em $b$-digit} of $n$ if   $n=_b b^ea+r$, and either $c=a$, or $c$ is a $b$-digit of $e$ or $r$. Zero is defined to have no $b$-digits, and   if $0<n<b$,  $n=_b b^0n+0$, so $n$ is the only $b$-digit of $n$.
The set of $b $-digits of $n$ is denoted $\coeffs bn$.

The classic Goodstein process is based on the base-change operator, where we replace $b$ in the $ b$-decomposition of $n$ by some other base $c \geq b$.
This is defined recursively as $\chgbases b{c}0 = 0$ and $\chgbases b{c}n = c^{\chgbases b{c}e}a+ \chgbases b{c}r$ if $n >0$ and $n=_b b^ea+r$.
The base change has the following basic properties, which may be verified directly, but see e.g.~\cite{FernandezWWalk}. 

\begin{lemma}\label{lemmBCProps}
Let $n,m,b,c,d\in\mathbb N$ with $2\leq b\leq c$.

\begin{enumerate}

\item If $n<m$, then $\chgbases b{c}n <\chgbases b{c}m$.

\item If $c\leq d$, then $\chgbases b{c}n \leq \chgbases b{d}n$.
The inequality is strict if $n\geq b$ and $d>c$.

\item\label{itBCPropsThree} If $n=_b b^ea+r$, then $\chgbases b{c}n =_c c^{\chgbases b{c}e}a+ \chgbases b{c}r$.
\end{enumerate}

 \end{lemma}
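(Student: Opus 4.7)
The plan is to prove all three items by a shared induction on $n$, with part (1) being an induction on $\max(n,m)$. The three claims interlock: part (3) is essentially a restatement of the recursive definition but requires the bound $\chgbases{b}{c}r < c^{\chgbases{b}{c}e}$, which is really an instance of part (1). So the main obstacle is organising the induction so that (1) is available at strictly smaller inputs when proving (3) at $n$. The base case is $n < b$ (and $m < b$ for (1)), where the base change acts as the identity: (1) reduces to the natural order on $\mathbb N$, with the sub-case $n < b \leq m$ handled by noting that $\chgbases{b}{c}m \geq c \geq b > n$; (2) becomes equality; and (3) is vacuous.

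For the inductive step, fix $n \geq b$ with $b$-decomposition $n =_b b^e a + r$. I first observe that $e, a, r < n$: clearly $a < b \leq n$; moreover $n \geq b \geq 2$ forces $e \geq 1$, since otherwise $r < b^0 = 1$ would give $n = a < b$; hence $e < b^e \leq n$ and $r < b^e \leq n$. Thus the inductive hypothesis applies to each of these components.

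I would handle (3) first. By the recursive definition, $\chgbases{b}{c}n = c^{\chgbases{b}{c}e} a + \chgbases{b}{c}r$; to certify this is the $c$-decomposition I check $0 < a < c$ (immediate from $1 \leq a < b \leq c$) and $\chgbases{b}{c}r < c^{\chgbases{b}{c}e}$. For the latter, $\chgbases{b}{c}{b^e} = c^{\chgbases{b}{c}e}$ follows either directly from the definition if $b^e = n$ (the case $a = 1$, $r = 0$) or from the inductive hypothesis of (3) at $b^e < n$, and then the inductive hypothesis of (1) applied to $r < b^e$ yields the bound.

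For (1) with $n < m$ and both $\geq b$, I compare the $c$-decompositions provided by (3): $n < m$ translates into a lexicographic relation on the triples $(e_n, a_n, r_n)$ and $(e_m, a_m, r_m)$, and in each of the three sub-cases the argument combines the bound $\chgbases{b}{c}r < c^{\chgbases{b}{c}e}$ from (3) (which prevents any `carry' from upsetting the inequality) with the inductive hypothesis of (1) for the strictly smaller component. For (2), the $c$- and $d$-decompositions share the digit $a$, and the inductive hypothesis gives $\chgbases{b}{c}e \leq \chgbases{b}{d}e$ and $\chgbases{b}{c}r \leq \chgbases{b}{d}r$; together with $c \leq d$ this yields $\chgbases{b}{c}n \leq \chgbases{b}{d}n$. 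For strictness when $n \geq b$ and $d > c$, the observation $e \geq 1$ forces $\chgbases{b}{c}e \geq 1$, so $d^{\chgbases{b}{d}e} > c^{\chgbases{b}{c}e}$, and multiplying by $a \geq 1$ upgrades the inequality to strict.
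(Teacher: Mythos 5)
The paper gives no proof of this lemma; it remarks that the properties ``may be verified directly'' and cites an external reference for details. Your mutual-induction argument is the natural direct verification and is correct in substance. Two small precision points are worth flagging. First, for $0 < n < b$ item (3) is trivially true rather than vacuous: such $n$ has the $b$-decomposition $e=0$, $a=n$, $r=0$, and the claim reduces to $\chgbases{b}{c}n = n =_c c^0 n + 0$, which holds since $0 < n < b \leq c$; only $n=0$, having no $b$-decomposition, is genuinely vacuous. Second, when you prove (3) at $n \geq b$ and invoke the inductive hypothesis of (1) at the pair $(r, b^e)$, this is legitimate only when $r > 0$, which forces $b^e < n$ and hence $\max(r,b^e) = b^e < n$; in the remaining case $b^e = n$ one necessarily has $a=1$ and $r=0$, where $\chgbases{b}{c}0 = 0 < c^{\chgbases{b}{c}e}$ needs no appeal to (1). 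You note the dichotomy $b^e = n$ versus $b^e < n$ when establishing $\chgbases{b}{c}{b^e} = c^{\chgbases{b}{c}e}$, but the same split governs the subsequent use of (1), and making it explicit there is what actually certifies that the mutual induction is not circular. With those points tightened the argument is sound.
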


Then, the classic Goodstein sequence starting on $n$ is the sequence $(n_i)_{i< I}$, where $n_0=n$, $n_{i+1} = \chgbases{i+2}{i+3}n_i-1$ if $n_i>0$, and either $I$ is finite and $n_{I-1} =0$, or else $I=\infty$ and all elements of the sequence are positive.
Goodstein's theorem then states that $I$ will always be finite~\cite{Goodstein1944}.

We generalize this to a multi-base setting, using {\em base hierarchies.}
Let $B\subseteq \mathbb N \setminus\{0,1\}$.
For $n\in\mathbb N \cup \{\infty\}$, let $S_B(n) $ be the least $b\in B$ such that $b>n $ and let $S_B(n) = \infty $ if there is no such $b$.
We regard every positive integer to divide $\infty$.
 
\begin{definition}
A set $B\subseteq \mathbb N\setminus \{0,1\}$ is a {\em base hierarchy} if $B\neq\varnothing$ and for every $b\in B$, $b\mid S_B(b)$.

If $B$ is a base hierarchy, elements of $B$ are {\em bases.}
\end{definition}

Each $n\in\mathbb N$ is assigned a base from $B$.
The general rule is that the base of $n$ should be the largest base below $n$.
For example, if $B = \{5,10\}$, the number $7$ would be written in base $5$, and (say) $14$ in base $10$.
However, it is unclear if we wish to write $10$ in base $5$ or in base $10$.
Either choice is reasonable, and both will be useful, but writing $10$ in base $5$ will carry some essential information.
We thus assign two bases to $10$: $\baseB {10} = 5$ and $\Base_B(10) = 10$.
On the other hand, $5$, being the least base, will only be assigned the base $5$.

\begin{definition}\label{defBase}
Given a base hierarchy $B$ and $n\in \mathbb N$, we define
\begin{enumerate}

\item the {\em lower base} of $n$ by
\[\baseB n = \max \{b\in B: b \leq \max \{n-1,\min B\}\};\]

\item the {\em upper base} of $n$ by
\[\Base_B (n) = \max \{b\in B: b \leq \max \{n,\min B\}\}.\]

\end{enumerate}
\end{definition}

It is not hard to check that $\baseB n = \Base_B(n)$, except when $n \in B \setminus \{\min B\}$.
For the most part, $\Base_B(n)$ can be regarded as the `official' base of $n$, but $\baseB n$ will be useful in Section~\ref{secTerm} for proving that our Goodstein process is terminating.

\begin{example}\label{exSing}
Every singleton $\{b\}$ with $b\geq 2$ is a base hierarchy, since in this case we have decreed $b\mid \infty =S_B(b)$.
\end{example}

Our Goodstein principle is based on the {\em upgrade operator} $\ug = \ug_B^C$, which takes the role of $\chgbases bc  $ in Goodstein's original theorem.
The purpose of this operator is to transform a number $n$ represented with respect to a base hierarchy $B$ to a new number $\ug n$ represented with respect to another base hierarchy $C$.
Intuitively, given $n\in\mathbb N$ with $b = \Base_B(n)$, in order to perform the upgrade, we first recursively upgrade all the $b$-digits of $n$.
We then search for the least base $c$ for which changing the base $b$ to $c$ will yield a `reasonable' result: namely, we need $\ug n$ to be larger than $\ug(n-1)$ to make $\ug$ monotone, but we also want the result to have $c$ as its base, so the end result should be smaller than $S_C(c)$.
This process of upgrading the digits first and then changing the base $b$ of $n$ to some $c\in C$ is the {\em deep base change,} and the result is denoted by  $\Chgbases  bc n$.
Let us make these operations precise.

\begin{definition}\label{defChg}
Let $B,C$ be base hierarchies with $ \min B\leq \min C $ and $n\in\mathbb N$.
We define $\ug n = \upgrade B^C n\in \mathbb N \cup\{\infty\} $ recursively on $n$.
If $n<\min B$, then $\ug  n = n$.

Otherwise, $n\geq \min B$.
Let $b =\Base_B(n)$ and assume inductively that $\ug m$ is defined for all $m<n$.
We first define the {\em deep base-change operator} $\Chgbases bc = \Chgbases bc^C_B $ on $m\in\mathbb N$:
if $m < b$, then $\Chgbases bc m = \ug m $.
If $m\geq b$, write $m=_b b^ea+r$, and set
\[\Chgbases bc m = c^{\Chgbases bc e } \ug a +\Chgbases bc r .\]
This has the effect of applying the upgrade operator to all the $b$-digits of $m$, then changing the base to $c$.

Then, define  $ \ug n = \Chgbases  bc n $, where $c $ is the least element of $C$ such that
\[\ug(n-1) < \Chgbases  bc n < S_C (c).\]
If no such $c$ exists, $\ug n = \infty$.
\end{definition}

When $n$ is a base in $B$, note that $b=n$ and $\Chgbases bcn=c$.
So, in this case, we basically jump to the next available base in $C$; for this to be a finite value, $C$ should have `enough available bases.'
In order for the upgrade to be well-behaved, we also do not want this value to be `much larger' than $\ug (n-1)$.
The precise conditions required from $C$ are given by Definition~\ref{defGoodSucc} below.

\begin{convention}
If in a result or proof we state that transformations are from $B$ to $C$, this indicates that $\ug$ is to be understood as $\ug^C_B$ and $\Chgbases\cdot\cdot$ as $\Chgbases\cdot\cdot^C_B$.
Similarly, we may state e.g.~that $\ug$ is from $B$ to $C$ to indicate that $\ug=\ug^C_B$.
More often than not, these indices can be suppressed, but there will be occasions when we do consider more than two base hierarchies in the same context.
\end{convention}

\begin{convention}\label{convOpers}
The transformations $\ug $, $\chgbases bc$ and $\Chgbases bc$ are performed before addition but after multiplication and exponentiation.
Thus for example, $\chgbases bc b^ea+r$ should be understood as $(\chgbases bc (b^ea))+r $.
\end{convention}

\begin{example}\label{exUg}
Consider transformations between $B =\{ 2,6  \}$ and $C=\{ 3,30, 180 \} $ and let us compute some values of the upgrade operator.
We have that $\ug 0=0$ and $\ug 1=1$.
Then, $\Base_B(2) = 2$ and $\Chgbases 2 3 2 = 3 <30=S_C(3)$, so $\ug 2= 3$.
More generally, when $B$ and $C$ are base hierarchies with $\min C \geq \min B$, $\ug ^C_B \min B = \min C$.

Now, let $n = 6^{6}+6^{2^2+1}$ and let us compute $\upgrade  n$.
We first perform the upgrade on $2^2+1$.
In this case, $\ug (2^2+1) = 3^3+1 <30 $, exactly as we would expect from the original Goodstein process.

However, when we attempt to replace $6$ by $30$ in $n$ and upgrade the digit, we obtain $30^{30}+30^{3^3+1}$, which is larger than the next base of $C$.
Thus, we instead use the base $180$, and
\[\ug n = 180^{180}+180^{3^3+1}.\]
\end{example}

\begin{definition}\label{defGoodSucc}
If $B,C\subseteq \mathbb N$ and $\ug=\ug_B^C$, we say that $C$ is a {\em good successor} for $B$ if both $B$ and $C$ are base hierarchies and
\begin{enumerate}

\item $\min B \leq \min C$,

\item $\ug n<\infty$ for all $n\in\mathbb N$, and

\item\label{itGoodSuccThree} if $\min B< n+1 \in B$, there are no multiples of $ \Base_ C(\ug n)$ in the interval 
$ ( \ug n , S_C(\ug n))$.

\end{enumerate}
\end{definition}

In other words, a good successor is one for which the upgrade operator is well-defined and bases always jump to the least available multiple of the previous base.
Our main results will involve Goodstein processes defined on sequences of good successors, so it is a good idea to check that these actually exist.
Below, we will define a rather general construction to build good successors of arbitrary bases.
The intuition is that good successors are typically constructed iteratively, in tandem with $\ug$.
That is, if $B$ is a base hierarchy, we build a good successor as an increasing union of finite sets $C = \bigcup_{n\in\mathbb N} C_n$, in such a way that $\ug_B^C x = \ug_B^{C_n} x$ whenever $x\leq n$.
If $n+1\in B$, we add a new base to $C_{n+1}$ to ensure that the good successor condition holds up to $n+1$.
We may also add new bases in cases where $n+1\notin B$ if we desire, but in order for the upgrade to behave nicely, it is convenient to only do so when $n+1$ is a multiple of its own base.
Such a number is {\em $B$-critical,} and the set of $B$-critical numbers is denoted $\partial B$.
The picture is then that we choose some $X$ with $B\subseteq X\subseteq\partial B$, and add a new base to $C_n$ only when $n\in X$.
Let us make this precise.

\begin{definition}\label{defCanon}
Let $B$ be a base hierarchy.
Say that $n>0$ is {\em $B$-critical} if $\Base_B (n) \mid n$.
Let $\partial B$ denote the set of $B$-critical numbers.

Given $X$ with $B\subseteq X\subseteq \partial B$, we define a new base hierarchy $B^X = \bigcup_{n <\infty} B^X_n  $.
Let $n\in\mathbb N$ and assume inductively that $B^X_{m} $ has been defined and is finite for all $m<n$.
Then, we define $B^X_n$ according to the following cases.\smallskip
\begin{Cases}

\item {\rm ($n\leq \min B$).} $B^X_n  = \{ \min B  + 1\}$.
\smallskip

\item {\rm ($\min B< n\notin  X $).} $B ^X_{n } =   B^X_{n-1} $.\smallskip

\item {\rm ($\min B<   n\in X $).}
Let $c=\max  B^X_{n-1}  $ and transformations be from $B$ to $B^X_{n-1}$.
Let $a$ be least so that $ \ug (n-1) < ca$. Then, set\footnote{As a failsafe, we may set $B ^X_{n } =   B^X_{n-1} $ if $\ug (n-1) = \infty$; however, as we will see, this will never be the case.}
$B^X_{n } =   B^X_{n-1} \cup \big \{ ca\} $.

\end{Cases}

\medskip

If $X=B$, we define $B'_n=B^X_n$ and $B'=B^X$.
This is the {\em minimalistic successor} of $B$.
Likewise, if $X=\partial B$, we define $B^+_n=B^X_n$ and $B^+=B^X$ and call this the {\em greedy successor} of $B$.
\end{definition}

The new base hierarchy $B^X$ is always a good successor for $B$.
We defer the proof until Proposition~\ref{propMonC}, as this requires some machinery.
Note that the map $X\mapsto  B^X $ is not monotone, so $B'$ is not minimal under set inclusion.
It is, however, the `leftmost' good successor of the form $B^X$, if we identify bases with binary reals $(r_i)_{i\in\mathbb N}$, with $r_i=1$ if $i\in B$, otherwise, $r_i=0$.
Similarly, $B^+$ is the rightmost successor that is of the form $B^X$.

\begin{remark} 
Good successors   of the form $B^X$ are by no means exhaustive: for example, we can let $\min C>\min B+1$ but otherwise follow Definition~\ref{defCanon} to obtain a good successor to the left of $B'$.
Likewise, we can modify the case for $n\in X$ to add any finite number of bases $ca,2ca,4ca,\ldots$ and obtain a good successor which is to the right of $B^+$.
However, good successors of the form $B^X$ (or even just $B^+$) will suffice for establishing our main results, so we will not explore other constructions further in this text.
\end{remark}

\begin{example}\label{exSucc}
Let $B=\{2,6\}$, and let us compute $  B'$.
First, we see that $  B'_n = \{\min B+1\} = \{3\}$ for $n\leq 5$.
From here, we see that $\ug_B^{ B'_5} 5 = 3^3+1 = 28$ and $\Base_ { B'_5} (28) = 3$, so $ B'_6$ is obtained by adding the next multiple of $3$, namely, $30$.
We thus arrive at $ B' = \{ 3,30\}$, and it is not hard to check that $ B'$ is indeed a good successor for $B$.
\end{example}

\begin{example}\label{exPlusOne}
Let $B=\{2\}$ and let us compute $B^+_6$.
Similarly to Example~\ref{exSucc}, $B^+_3  = \{3\}$, but now, to find  $B^+_4$, we first compute $\ug_B^{ B^+_3} 3 = \ug_B^{ B^+_3} (2+1) = 4$.
Since $\Base_{B^+_3}(4) = 3 $, we must add the next multiple of $3$ after $4$, that is, $6$.
We thus arrive at $B^+_4 = B^+_3 \cup \{6\} = \{3,6\}$.
Since $2\nmid 5\notin B$,  $B^+_5  = B^+_4  $.
On the other hand, we do add a new element to $B^+_6$.
Compute $\ug_B^{ B^+_5} 5 = \ug_B^{ B^+_5}(2^2+1) = 6^6+1 $.
The next multiple of $6$ is $6^6+6$, so $B^+_6  = B^+_5  \cup \{6^6+6 \} = \{3,6,6^6+6\} $.

In general, we will add one new base for every even number, and thus $B^+$ is infinite, even though $B$ has only one element.
\end{example}

For our Goodstein processes, each $i\in\mathbb N$ will be endowed with a base hierarchy $\mathcal B_i$.
The collection of these hierarchies is a {\em dynamical hierarchy.}

\begin{definition}
A {\em dynamical hierarchy} is a sequence $\mathcal B=(\mathcal B_i)_{i\in\mathbb N}$, where for each $i\in\mathbb N$, $\mathcal B_i$ is a base hierarchy and $\mathcal B_{i+1}$ is a good successor for $\mathcal B_i$.
\end{definition}

\begin{example}\label{exClassic}
If $ B_i=\{i+2\}$, then $\mathcal B = ( B_i)_{i\in\mathbb N}$ is a dynamical hierarchy.
As we will see, this dynamical hierarchy is related to Goodstein's original theorem.
\end{example}

\begin{example}
Let $ B_0$ be any base hierarchy and define inductively $B_{i+1} = (B')_i $, as in Example~\ref{exSucc}.
Then, $(B_i)_{i\in\mathbb N} $ is a dynamical hierarchy.
If $B_0=\{2\}$, we obtain the dynamical hierarchy of Example~\ref{exClassic} as a special case.
Note moreover that if $B_0$ is finite, then $B_i$ is finite for all $i$.
\end{example}

Our independence results will be based on a specific dynamical hierarchy built using greedy successors.

\begin{definition}\label{defODH}
Let $\odh=(\odh_i)_{i\in\mathbb N}$ be given by $\odh_0 =\{2\} $ and $\odh_{i+1}= \odh^+_{i}$; this is the {\em greedy dynamical hierarchy.}
\end{definition}

If a dynamical hierarchy $\mathcal B$ is given and clear from context, we may replace $\mathcal B_i$ by $i$ in our terminology and notation, e.g.~we may write $=_i$ instead of $=_{\mathcal B_i}$,  $\upgrade i$ instead of $\upgrade{\mathcal B_i}^{\mathcal B_{i+1}}$, etc.
With this, we are ready to define our fractal Goodstein processes.

\begin{definition}
Given a dynamical hierarchy $\mathcal B$ and $n\in\mathbb N$, we recursively define $\goodp ni{\mathcal B} $ by letting $\goodp n0{\mathcal B} =n $ and, if $\goodp ni{\mathcal B}  $ is defined and positive, then $ \goodp n{i+1} {\mathcal B} = \upgrade i \goodp n {i} {\mathcal B} -1 $; otherwise, $ \goodp n{i+1} {\mathcal B}   $ is undefined.
\end{definition}
 
\begin{example}\label{exClassicG}
If $\mathcal B $ is the dynamical hierarchy of Example~\ref{exClassic}, then the sequence $\goodp ni{\mathcal B} $ is the classical Goodstein sequence for $n$.
\end{example}

With this, we may state our main results, proven throughout the rest of the text.

\begin{MainTheo}\label{theoTerm}
If $\mathcal B$ is any dynamical hierarchy and $m\in\mathbb N$, then there is $i\in \mathbb N$ such that $\G^{\mathcal B}_i(m) = 0$.
\end{MainTheo}

\begin{MainTheo}\label{theoGoodInd}
Let $\odh$ be the greedy dynamical hierarchy of Definition~\ref{defODH}.
Then, $\sf KP$ does not prove that for every $n\in\mathbb N$, there exists $i\in\mathbb N$ with $\goodp ni{\odh} = 0$.
\end{MainTheo}
  
Theorem~\ref{theoGoodInd} should be understood as follows.
Say that an arithmetical formula $\delta$ is {\em bounded} if all quantifiers occurring in $\delta$ are of the form $\forall u < t(\vec v)$ or $\exists u < t(\vec v)$, where $t$ is a polynomial with coefficients in $\mathbb N$, and $\varphi$ is a $\Sigma^0_1$-formula if it is of the form $\exists z \in\mathbb N \ \delta(z)$, where $\delta(z)$ is bounded.
Arithmetical formulas, and $\Sigma^0_1$-formulas in particular, can be encoded in the language of $\sf KP$ in a standard way~\cite{Barwise}.
Since $\goodp xy{\odh}$ is computable~\cite{smorynski1991logical}, there is a $\Sigma^0_1$-formula $\varphi(x,y)$ such that $\varphi(x,y)$ is true on $\mathbb N$ if and only if $\goodp xy{\odh} = 0$.
Then, Theorem~\ref{theoGoodInd} states that $\sf KP$ does not prove $\forall x\in\mathbb N \ \exists y\in\mathbb N  \ \varphi(x,y)$.

\section{Properties of the Upgrade Operator}

The proofs of our main results rely heavily on various regularity properties of the upgrade operator and the deep base change, which we establish in this section.
We begin with what is perhaps their most fundamental property: monotonicity.

\begin{lemma}\label{lemmMonUg}
Let all transformations be from a base hierarchy $B$ to a good successor $ C$ and $m,n,b,c,d\in\mathbb N$ with $b,c,d\geq 2$.

\begin{enumerate}

\item\label{itMonUgOne} If $m<n$, then $\ug m < \ug n$.

\item\label{itMonUgTwo} If $m<n$ and $ \ug b\leq c$, then $\Chgbases bc m \leq  \Chgbases bc n$.

\item\label{itMonUgThree} If $   c < d$, then $\Chgbases bc n \leq \Chgbases bd n$, and the inequality is strict if $ n\geq b$.

\item\label{itMonUgFour} If  $ b<n$, then $ c < \Chgbases bc  n$, and $ c = \Chgbases bc  b$.

\end{enumerate}
\end{lemma}

\begin{proof}
\pref{itMonUgOne} Note that we may assume without loss of generality that $m=n-1$, since otherwise we can see by induction on $n$ that $\ug m \leq \ug(n-1) < \ug n$.
The claim is then immediate, as $\ug n$ is explicitly selected larger than $\ug(n-1)$ whenever $\ug n<\infty$.
\medskip

\noindent\pref{itMonUgTwo} As above, assume that $m=n-1$ and proceed by induction on  $ n $.
If $n < b$, we can apply the first claim to obtain $\ug m<\ug n$, and thus
\[\Chgbases bc m = \Chgbases bd m = \ug m <\ug n = \Chgbases bd n.\]
Otherwise, write $n=_b b^ea+r$ and consider the following cases.
\medskip

\begin{Cases}

\item ($r>0$). We use the induction hypothesis on $r$ to see that
\[\Chgbases  bc m= c^{\Chgbases bc e}\ug a +\Chgbases bc (r-1)  < c^{\Chgbases bc e}\ug a +\Chgbases bc  r =\Chgbases  bc n. \]
\medskip

\item ($r=0$). This case splits in two sub-cases.
\begin{Cases}
\medskip

\item ($a>1$). We have that
\begin{align*}
\Chgbases  bc m & = c^{\Chgbases bc e}\ug (a-1) + \Chgbases bc (b^e-1) < c^{\Chgbases bc e}(\ug a-1) +\Chgbases bc  b^e = \Chgbases  bc n, 
\end{align*}
where from $\ug(a-1)<\ug a$ we inferred $\ug(a-1) \leq \ug a-1$ and we applied the induction hypothesis to $b^e$.
\medskip

\item ($a=1$).
In this case, the induction hypothesis applied to $b$, $e$, and $b^{e-1}$, and the assumption that $  \ug b \leq c$ yields
\begin{align*} 
 \Chgbases  bc m  &= c^{\Chgbases bc (e-1)}\ug (b-1) + \Chgbases bc (b^{e-1}-1) \\
&< c^{\Chgbases bc  e-1 }(\ug  b-1)  + \Chgbases bc  b^{  e-1}  \\
&\leq c^{\Chgbases bc  e-1 }(c-1)  + c^{ \Chgbases bc   e-1}  = c^{\Chgbases bc  e } = \Chgbases  bc n.
\end{align*}
\end{Cases} 
\end{Cases}

\noindent \pref{itMonUgThree} If $n<b$ then $\Chgbases bc n =\ug n = \Chgbases bd n$.
Otherwise, write $n=b^ea+r$.
Induction on $e,r$ yields
\[\Chgbases bc n =  c^{\Chgbases bc  e  }\ug a + \Chgbases bc r \leq c^{\Chgbases bd  e  }\ug a + \Chgbases bd r < d^{\Chgbases bd  e  }\ug a + \Chgbases bd r = \Chgbases bd n, \]
where the first inequality is by the induction hypothesis and the second uses $\ug a, \Chgbases bd  e >0$, as can be derived from \pref{itMonUgOne} and \pref{itMonUgTwo}, given that $a,e>0$.\medskip

\noindent\pref{itMonUgFour} As above, we have that $\Chgbases bc n =  c^{\Chgbases bc  e  }\ug a + \Chgbases bc r$.
If $n=b$, then $e=a=1$ and $r=0$, so $\Chgbases bc b =  c$ follows.
If $n>b$, then $e,a\geq 1$ and either $r>0$ or $\max\{e,a\}>1$, hence  $c<\Chgbases bc n $.
\end{proof}

\begin{remark}
Lemma~\ref{lemmMonUg} does not strictly require $C$ to be a good successor of $B$.
In fact, it may even hold in some cases where $B$ and $C$ are not base hierarchies!
However, the added generality will not be useful to us and it simpler to have this as a `blanket' assumption throughout this section.
For instance, in the above proof, this assumption allows us to assume that $\ug^C_B$ is total. 
Only Lemma~\ref{lemmBaseCalc} fully uses the assumption that $C$ is a good successor, but that lemma is indispensable in our proof of termination.
\end{remark}
 
Next, we establish how the upgrade operator interacts with bases.
Bellow, $\Base_B $ is as given by Definition~\ref{defBase}.

\begin{lemma}\label{lemmBasePreserve}
Consider transformations between a base hierarchy $B$ and a good successor $ C$.
Let $n\geq \min B$, $b = \Base_B(n)$, and   $d$ be such that $\ug n =\Chgbases bd n $.

\begin{enumerate}

\item\label{itBasePreserveA} $n\in B$ if and only if $\ug  n\in C$.

\item\label{itBasePreserveD} $  d = \Base_ C({\ug n}) $.

\item\label{itBasePreserveCompare} If $\min B\leq m<n$, $b'=\Base_B(m)$, and $\ug m =\Chgbases {b'}c m$, then $c\leq d$.

\end{enumerate}
\end{lemma}

\begin{proof}
\noindent \pref{itBasePreserveA}{,}\pref{itBasePreserveD}
If $n\in B$, then $n=b$ and $\Chgbases bd n = \Chgbases bd b = d\in C$.
Moreover, $\Base_C(d) =d$, establishing both claims.
If $n\notin B$, then $b<n$.
Lemma~\ref{lemmMonUg}\pref{itMonUgFour} yields
$d <\Chgbases bd n <S_C(d) $.
This means that $\ug n\notin C$, since $\ug n$ is strictly between an element of $C$ and its successor, and moreover $\Base_C(\ug n)=d$ by definition.
\medskip

\noindent\pref{itBasePreserveCompare} 
Towards a contradiction, assume $d < c$.
Then, $S_C(d)\leq c$, which from the definition of the upgrade operator yields
\[\ug n <S_C (n) \leq c \leq \ug m, \]
where the last inequality follows from Lemma~\ref{lemmMonUg}\pref{itMonUgFour}, since $c   \leq\Chgbases {b'}c m = \ug m $.
This contradicts Lemma~\ref{lemmMonUg}\pref{itMonUgOne}.
\end{proof}

The next lemma provides an equivalent alternative to Definition~\ref{defChg}, which will be more convenient on occasion.
When finding suitable $c$ for computing $\ug n= \Chgbases bc n$ for $n\notin B$, we do not need to verify the inequality $\ug (n-1) < \Chgbases bc n$ directly; it suffices to ensure that $\ug b\leq c $.
If instead $n\in B$, we may simply take the successor of $\ug (n-1)$.
Let us make this precise.

\begin{lemma}\label{lemmOtherDef}
Consider transformations between a base hierarchy $B$ and a good successor $ C$.
Let $n\geq \min B$, $b=\Base_B(n)$, and $c$ be such that $\ug n=\Chgbases bc n$, as given by Definition~\ref{defChg}.

\begin{enumerate}

\item\label{itOtherDefBase} If $n\in B$, then $c = \ug n = S_C(\ug(n-1))$.

\item\label{itBasePreserveOtherDef} If $n\notin B$, then $c$ is the least element of $C$ such that $\ug b\leq c$ and $\Chgbases bcn<S_C(c)$.
\end{enumerate}
\end{lemma}

\begin{proof}
If $n\in B$, then $n=b$ and $c$ is the least element of $ C$ such that $\ug(b-1)< \Chgbases bcb <S_C(c)$.
But, given that $\Chgbases bcb=c$, $c = S_C(\ug (b-1))$ satisfies these properties, while no smaller base can satisfy them.
We conclude that $ c=S_C(\ug (b-1))$ and $\ug n=c$.

Otherwise, since $n\notin B$, we have that $\Base_B(n)\leq n-1$, from which it follows that $\Base_B(n-1)=b$. We may assume inductively that $\ug(n-1) = \Chgbases b{d}(n-1)$ for the least ${d}\in C$ such that $\ug b \leq {d} $ and $\Chgbases b{d} (n-1) < S_C( {d})$ (note that this holds even if $n-1=b$).

Let $c'\geq\ug b $ be the least element of $C$ such that  $\Chgbases b{c'}n<S_C(c') $; we must check that $c'=c$.
By Lemma~\ref{lemmBasePreserve}\pref{itBasePreserveCompare} and the induction hypothesis, $\ug b\leq d\leq c$, so we must indeed have $\ug b\leq c$.
Moreover, $\Chgbases bcn<S_C(c)$, so $c'\leq c$ by minimality of $c'$.

In view of Lemma~\ref{lemmMonUg}\pref{itMonUgTwo},
$ \Chgbases b{c'}(n-1)<\Chgbases b{c'}n<S_C(c') $, so $d\leq c'$ by minimality of $d$.
By Lemma~\ref{lemmMonUg}\pref{itMonUgThree}, $ \Chgbases b{d}(n-1)\leq \Chgbases b{c'}(n-1)$, and hence $\ug(n-1) = \Chgbases b{d}(n-1) <  \Chgbases b{c'}n$.
Since also $\Chgbases b{c'}n<S_C(c') $, we conclude that $c'\geq c$ by minimality of $c$, and hence equality holds.
\end{proof}

Let us now derive some additional useful properties of the deep base-change.

\begin{lemma}\label{lemmUgProps}
Let transformations be between a base hierarchy $B$ and a good successor $ C$.
Let $ b\geq 2 $, $c \geq \ug b $, and $n=_b b^ea+r$.
Then:

\begin{enumerate}

\item\label{itUgPropsDiv} $b\mid n$ if and only if $c\mid \Chgbases bc n$.

\item  \label{itUgPropsNF}
$\Chgbases bc n = _c c^{\Chgbases bc e } \ug  a +\Chgbases bc r$.

\item \label{itUgPropsCoeffs} $\coeffs c{\Chgbases bc n } = \ug \coeffs b n$.

\item\label{itUgPropsChanges} If $d  \geq c $, then $\Chgbases b{d} n = \chgbases c{d} \Chgbases bc n$.

\end{enumerate}
\end{lemma}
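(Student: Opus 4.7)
My approach is to prove all four items simultaneously by induction on $n$, with item~(2) serving as the main engine that drives the others. In the base case $n < b$ the $b$-decomposition is trivial ($e = 0$, $a = n$, $r = 0$) and $\Chgbases bc n = \ug n$ by Definition~\ref{defChg}. Strict monotonicity of $\ug$ from Lemma~\ref{lemmMonUg} together with the standing hypothesis $c \geq \ug b$ gives $0 < \ug n < c$, from which all four items are immediate: divisibility fails on both sides, the normal form is trivial, the coefficient set is $\{\ug n\}$, and $\chgbases cd \ug n = \ug n = \Chgbases bd n$ since $\ug n < c \leq d$.

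For the inductive step $n \geq b$, write $n =_b b^e a + r$. The heart of the proof is item~(2): by Definition~\ref{defChg}, $\Chgbases bc n = c^{\Chgbases bc e} \ug a + \Chgbases bc r$, and to certify this as a $c$-decomposition I need $0 < \ug a < c$ and $\Chgbases bc r < c^{\Chgbases bc e}$. The first follows from $0 < a < b$ and strict monotonicity of $\ug$, combined with $\ug b \leq c$. For the second, a direct computation from the definition gives $\Chgbases bc b^e = c^{\Chgbases bc e}$, and the second part of Lemma~\ref{lemmMonUg} (whose hypothesis $\ug b \leq c$ is exactly our standing assumption) promotes $r < b^e$ to $\Chgbases bc r < \Chgbases bc b^e$.

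With~(2) in hand, the remaining items follow cleanly. For item~(1), the hypothesis $n \geq b$ forces $e \geq 1$, so $b \mid n$ reduces to $b \mid r$; likewise $\Chgbases bc e \geq 1$ reduces $c \mid \Chgbases bc n$ to $c \mid \Chgbases bc r$, and the inductive hypothesis on $r$ closes the loop. For item~(3), the recursive definition of $c$-digits applied to the normal form from~(2) yields $\coeffs c{\Chgbases bc n} = \{\ug a\} \cup \coeffs c{\Chgbases bc e} \cup \coeffs c{\Chgbases bc r}$, which the inductive hypothesis on $e$ and $r$ rewrites as $\ug(\{a\} \cup \coeffs b e \cup \coeffs b r) = \ug \coeffs b n$. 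For item~(4), applying the classical recursion for $\chgbases cd$ to the normal form from~(2) and invoking the inductive hypothesis on $e$ and $r$ reassembles $\Chgbases bd n$.

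The only genuinely subtle point, and thus the main obstacle, is the bound $\Chgbases bc r < c^{\Chgbases bc e}$ in~(2); everything else is essentially bookkeeping, but it is precisely here that the hypothesis $c \geq \ug b$ is indispensable for invoking Lemma~\ref{lemmMonUg}.
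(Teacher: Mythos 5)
Your proof is correct and follows essentially the same route as the paper: item~(2) is established directly from Lemma~\ref{lemmMonUg} (the inequality $0 < \ug a < c$ from monotonicity and the hypothesis $c \geq \ug b$, and $\Chgbases bc r < \Chgbases bc b^e = c^{\Chgbases bc e}$ from Lemma~\ref{lemmMonUg}.\ref{itMonUgC}), and items (3) and (4) are read off the resulting $c$-decomposition by induction on $n$. The only cosmetic difference is that you fold item~(1) into the same simultaneous induction via the $c$-decomposition from item~(2), whereas the paper treats the two directions of item~(1) by a self-contained induction (the reverse direction using Lemma~\ref{lemmMonUg}.\ref{itMonUgOne} to bound the digits by $c$); the underlying argument is the same.
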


\begin{proof}
The left-to-right direction of the first item is proven by induction on $n$.
When $n < b$, the claim follows from $\Chgbases bc n = \ug n <\ug b\leq c $.
Otherwise, write $n=_b b^ea+r$, and note that if $b\mid n$, then  $e\geq 1$ and $b\mid r$.
The induction hypothesis yields $c\mid \Chgbases bc r $ and clearly $c\mid c^{\Chgbases bc e }\Chgbases bc a = \Chgbases bc b^ea $, so\footnote{As per Convention~\ref{convOpers}, $\Chgbases bc b^ea = \Chgbases bc (b^ea)$.}
\[c\mid\big ( \Chgbases bc b^ea + \Chgbases bc r\big ) = \Chgbases bc n.\]
The right-to-left version is similar, but uses Lemma~\ref{lemmMonUg}\pref{itMonUgOne} to see that every $c$-digit of $\Chgbases bc n$ is strictly bounded by $c $.

The second item uses Lemma~\ref{lemmMonUg} to check that the order conditions on $c$-decompositions are satisfied: \ref{lemmMonUg}\pref{itMonUgOne} yields $0=\ug 0<\ug a <\ug b\leq c$ and \ref{lemmMonUg}\pref{itMonUgTwo} yields $\Chgbases bc r < \Chgbases bc b^e = c^{\Chgbases bc  e} $.
The third claim is a corollary of the second by an easy induction.

The fourth item also follows from the second, using the fact that $\Chgbases bc n$ yields a $c$-decomposition and comparing the definitions of the left and right sides.
\end{proof}

Aside from the `official' $b$-decomposition $n =b^ex+y$, it is sometimes convenient to write $n=ba+r $ with $r<b$.
As per Convention~\ref{convOpers}, expressions such as $\Chgbases bc ba+r$ should be understood as $(\Chgbases bc (ba))+r$.
The transformations we consider are well behaved with respect to this alternative decomposition.

\begin{lemma}\label{lemmMonUgRem}
Consider transformations between a base hierarchy $B$ and a good successor $ C$.
Let $n \in\mathbb N$ and $b \geq 2$.
Write $n=ba+r$ with $r<b$ (not necessarily in $b$-decomposition), and let $c \in C$.
Then,
\begin{enumerate}

\item\label{itMonUgRemChg} $\Chgbases bc n = \Chgbases bc ab +\ug r$.

\item\label{itMonUgRemUg} $ \ug n =\ug ba+\ug r$.

\item\label{itMonUgRemSucc} If $\min B \nmid (n+1)$, then $\ug (n+1) = \ug n + 1$.

\item\label{itMonUgRemBase} $\Base_C ( \ug n ) = \Base_ C( \ug ba)$.

\end{enumerate}

\end{lemma}

\begin{proof}
\pref{itMonUgRemChg} Proceed by induction on $a$.
The case for $a=0$ is clear.
Otherwise, write $n=_b b^ex+y$ and note that there is $a'$ such that $y=ba'+r$ and thus $ba =_b b^ex +ba'$.
Then,
\begin{align*}
\Chgbases bc n & = \Chgbases bc b^ex + \Chgbases bc ( ba'+r )\stackrel{\text{\sc ih}}  = \Chgbases bc b^ex + \Chgbases bc   ba'+ \ug r \\
& = \Chgbases bc  (b^ex +    ba')+\ug r = \Chgbases bc ba+ \ug r.
\end{align*}

\noindent \pref{itMonUgRemUg}
If $a=0$ or $r=0$, there is nothing to prove, so we assume otherwise.
Definition~\ref{defChg} yields $d,d'\in C$ such that $\ug ba= \Chgbases bd ba $ and $\ug n = \Chgbases b{d'} n $.
The claim will follow from showing that $d=d'$.
Note by Lemma~\ref{lemmBasePreserve}\pref{itBasePreserveCompare} that $d\leq d'$, so it suffices to check that $d$ satisfies the inequalities required according to Lemma~\ref{lemmOtherDef}\pref{itBasePreserveOtherDef} for $n$, namely, $\ug b\leq d$ and $ \Chgbases bd n <S\us C( d) $.

By Lemmas~\ref{lemmOtherDef} and~\ref{lemmMonUg}\pref{itMonUgOne}, $d\geq \ug b>\ug r$.
By the definition of a base hierarchy, every element of $C$ divides it successor, so $d\mid S_ C( \ug ba)$.
By Lemma~\ref{lemmUgProps}\pref{itUgPropsDiv}, $d \mid \Chgbases bd ba$, hence there are no multiples of $d$ in $(\Chgbases bd ba,\Chgbases bd ba +d)$.
It follows from \pref{itMonUgRemChg} that
\[\Chgbases bd n = \Chgbases bd ba +\ug r < \Chgbases bd ba +d \leq S\us C( d).\]
Thus the desired inequalities hold and we conclude that  $\ug n = \Chgbases bd n = \ug ba +\ug r $.
\medskip

\noindent\pref{itMonUgRemSucc} Let $b = \min B$ and write $n+1=ba+r$ as above, noting that $r>0$.
Using \pref{itMonUgRemUg}, $\ug(n+1) = \ug ba+ \ug r= \ug ba+   r $, where the last equality uses $r<\min B$.
By the same token, $\ug n  =   \ug ba+  r -1 $, and the claim follows.
\medskip

\noindent\pref{itMonUgRemBase} We have already seen that $\ug n= \Chgbases bd n$ for $d=\Base_ C( \ug ba)$, so Lemma~\ref{lemmBasePreserve}\pref{itBasePreserveD} yields $\Base_C ( \ug n ) =d = \Base_ C( \ug ba)$.
\end{proof}

The next lemma shows how to compute $\ug n$ when $n$ is a base and $C$ is a good successor.
Note that both the upper and lower base are involved, as given by Definition~\ref{defBase}.

\begin{lemma}\label{lemmBaseCalc}
Let transformations be from $B$ to a good successor $C$ and let $b\in B\setminus\{\min B\}$.
Let $b'$ be the predecessor of $b$.
Then, $\ug b = \ug(b-b' )+d$, where
\[d = \Base_C(\ug(b-1 ) ) = \basep C{ \ug b  }.\]
\end{lemma}

\begin{proof}
Since $b'$ is the predecessor of $b$, $b'=\Base_B(b-1)$.
Writing $b-1= b-b' +r$ with $r = b'-1$ and letting $d = \Base_C(\ug(b-1 ) )$, we have by Lemma~\ref{lemmMonUg}\pref{itMonUgOne} that $\ug r<\ug b'\leq d$, and by Lemma~\ref{lemmBasePreserve}\pref{itBasePreserveD}, $\ug(b-1) =\Chgbases {b'} d (b-1)$.
Since $b'\mid ( b-b')$, Lemma~\ref{lemmUgProps}\pref{itUgPropsDiv} yields $d\mid \ug(b-b')$, Lemma~\ref{lemmMonUgRem}\pref{itMonUgRemUg} yields $\ug(b-1) = \ug(b-b')+ \ug r < \ug(b-b')+ d$, and Lemma~\ref{lemmMonUgRem}\pref{itMonUgRemBase} yields $d=\Base_C(\ug(b-b'))$.
By Lemma~\ref{lemmOtherDef}\pref{itOtherDefBase}, $\ug b  = S_C( \ug(b-1) )$, and since $C$ is a good successor for $B$, this must be the least multiple of $d$ above $\ug(b-1)$, i.e., $\ug b' = \ug(b-b')+ d$.
Since $b-1$ is not a base, neither is $\ug(b-1)$ by Lemma~\ref{lemmBasePreserve}\pref{itBasePreserveA}, hence there are no bases in $[\ug(b-1),\ug b)$, and thus $\basep C{\ug b} = d$.
\end{proof}

Finally, we note that the upgrade operator is `continuous' in the sense that the computation of $\ug_B^C  n$ only depends on bases below $\ug_B^C  n$ itself.
This can be made precise by restricting $C$ to such values.
For $n\in\mathbb N$ and a base hierarchy $B$, let $B{\upharpoonright_n} = B\cap [0, n]$.

 \begin{lemma}\label{lemmRestrict}
Let $B$, $B'$, $C$, $C'$ be base hierarchies with $\min B   \leq \min C $.
Suppose that $n\in\mathbb N$ is such that $n\geq \min B$, $B{\upharpoonright_{n}} = B'{\upharpoonright_{n}}$, $\ug_B^C n<\infty$, and $m\geq \ug_B^C n$ is such that $C{\upharpoonright_{m}} = C'{\upharpoonright_{m}}$.
Then:
\begin{enumerate}

\item\label{itRestrictTwo} For all $x \leq n$, $\ug^C_B x = \ug_{B'}^{C'} x $.

\item\label{itRestrictThree} For all $x \in\mathbb N$, $b\leq n $, and $c\geq    \ug b $, $\Chgbases bc^C_B x = \Chgbases bc _{B'}^{C'} x <\infty $.

\end{enumerate}
\end{lemma}

The proof follows a straightforward induction and is left to the reader.
Note that in the second item, there is no restriction on $x$; this is due to the fact that $\Chgbases bc$ only applies the upgrade operator to numbers strictly below $b$.

\section{Termination}\label{secTerm}
 
Our proof of termination follows the same basic pattern as that of Goodstein's original proof: namely, each fractal Goodstein progression is assigned a decreasing sequence of ordinals.
Since any decreasing sequence of ordinals is finite, so is the Goodstein process.

In the case of the classic Goodstein process, one exploits the fact that base-$b$ decompositions work in much the same way when $b$ is transfinite.
To make this precise, let us review some ordinal arithmetic.
We assume some familiarity with ordinal addition, multiplication, and exponentiation (see e.g.~\cite{Pohlers:2009:PTBook} for details) and denote the first infinite ordinal by $\om$.

\begin{definition}\label{demOmNF}
Given an ordinal $\kappa$ and $\xi>0$, there exist unique ordinals $\al,\be,\ga$ with $\be<\kappa$ such that $ \xi=\kappa^\al\be+\ga$ and $\ga<\kappa^\al$; this is the {\em $\kappa$-decomposition of $\xi$,} also known as the {\em Cantor decomposition} when $\kappa=\om$.
We write $\xi=_\kappa \kappa^\al\be+\ga$ to indicate that the right-hand side is the $\kappa$-decomposition of $\xi$.
\end{definition}

The ordinal $\ve(\kappa)$ is defined as the least $\ve>\kappa$ such that $\ve=\om^\ve$.
To see that such an ordinal exists, define $\se \kappa 0  = 1$ and $\se \kappa {i+1}  = \kappa^{\se \kappa i }$.
Then, assuming without loss of generality that $\kappa\geq\om$, we see that $\ve(\kappa) = \sup_{n<\om}\se \kappa i$.
If $\xi=_\kappa \kappa^\al\be+\ga < \ve(\kappa)$, then we may also deduce that $\al<\xi$.

Goodstein's theorem is then proven using the well-foundedness of $\ve(\om)$, more commonly denoted $\ve_0$.
For this, we observe that  $\chgbases bc  n$ is well-defined even when $c$ is infinite, as it relies on operations available for arbitrary ordinals.
In particular, we may take $c=\om$.
The analogue of Lemma~\ref{lemmBCProps} is then seen to hold, and, specifically, the $\om$-base-change has the following crucial properties:
\begin{itemize}

\item {\sc Monotonicity.} If $m<n$ and $b\geq 2$, then $\chgbases b\om m < \chgbases b\om n$.

\item {\sc Preservation.} If $2\leq b < c $, then $\chgbases b\om n = \chgbases c\om \chgbases bc n$.

\end{itemize}
These properties are not terribly hard to check, but see e.g.~\cite{FernandezWWalk} for a modern presentation.
Then, if $\mathcal B$ is the dynamical hierarchy of Example~\ref{exClassic} (which produces the standard Goodstein process), we see that if $\goodp n i{\mathcal B} >0$, 
\begin{align*}
\chgbases{i+2}\om \goodp n i{\mathcal B} & = \chgbases{i+3}\om\chgbases{i+2}{i+3} \goodp n i{\mathcal B} &\text{by preservation,}\\
& > \chgbases{i+3}\om (\chgbases{i+2}{i+3} \goodp n i{\mathcal B} - 1) &\text{by monotonicity,}\\
& = \chgbases{i+3}\om   \goodp n {i+1}{\mathcal B}.
\end{align*}
It follows that the sequence $(\chgbases{i+2}\om \goodp n i{\mathcal B})_{i<I}$ is a decreasing sequence of ordinals and hence $I$ is finite, establishing Goodstein's theorem.

We likewise wish to assign countable ordinals to the elements in a fractal Goodstein process.
The basic principle is the same, but things get somewhat more involved.
Given a base hierarchy $B$, we will assign to each $n\in \mathbb N$ an ordinal $o_B(n)$ satisfying the analogues of preservation and monotonicity.
Let us focus on the latter: here, monotonicity means that $m<n$ implies $o_B(m)<o_B(n)$.
As an example, consider $\mathcal B=\{2,6\}$ as in Example~\ref{exUg} and let $n= 6+2^2$.
If, as before, we replace each base by $\om$, we would obtain
\[o_B(n) = \om+\om^\om = \om^\om = o_B(2^2),\]
which clearly violates monotonicity.
Instead, we will assign to the base $6$ an ordinal which is large enough so that the first summand does not cancel.
If the only information available about $o_B(2^2)$ is that it is countable, one `safe' choice is to replace $6$ by $\Om$, the first uncountable ordinal.
We thus arrive at $O_B(n) = \Om +o_B(2^2)$; note the capital $O$ indicating that this ordinal need not be countable.

To obtain $o_B(n)$, in a second phase, we `collapse' $O_B(n)$ to a countable ordinal.
For this, we use the $\vartheta$ function, introduced by Rathjen~\cite{RathjenFragments,RathjenKruskal}.
This function has the feature that it maps $\ve(\Om)$ (more commonly denoted $\ve_{\Om+1}$) to $\Om$.
It is sensitive to the coefficients of an ordinal and, particularly, to their {\em maximal coefficient,} defined by $\mco 0 = 0$ and $\mco{\xi} = \max  \{\al,\mco\eta,\mco\ga\} $ if $\xi=_\Om \Om^\eta\al+\ga$.

\begin{definition}
The function  $\vartheta \colon\ve _{\Om+1} \to \Om$ is defined recursively by
\[\vartheta(\xi) = \min \left \{  \om^\theta>\mco \xi \col \forall \zeta<\xi (\mco\zeta <\om^\theta \Rightarrow  \vartheta(\zeta) <\om^\theta )\right \}.\]
\end{definition}

It is a non-trivial fact that this function is well-defined and injective and its values are additively indecomposable~\cite{BuchholzOrd}.
It allows us to represent ordinals much larger than $\ve_0$; indeed, any ordinal below $\vartheta(\ve_{\Om+1}) := \sup_{k<\om}\vartheta(\se\Om k)$, known as the {\em Bachmann-Howard ordinal,} is a sum of values of the $\vartheta$ function.

With this, our second attempt at an ordinal assignment is to set $o_B(n)=\vartheta(O_B(n))$.
However, we encounter a new problem.
Using our current assignment, we obtain
\[o_B(2^2) = \vartheta(\Om^\Om) > \vartheta(\Om+o_B(1)) = o_B(6+1).\]
To see how we may remedy this, let us review the key order property of $\vartheta$.

\begin{proposition}[\cite{BuchholzOrd}]\label{propThetaOrder}
If $\xi<\zeta<\ve_{\Om+1}$, then $\vartheta(\xi)<\vartheta(\zeta)$ if and only if $\mco\xi<\vartheta(\zeta)$.
\end{proposition}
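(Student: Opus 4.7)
The plan is to unfold the definition of $\vartheta$ directly; no transfinite induction is required. From the defining formula, $\vartheta(\xi)$ is of the form $\om^{\theta_\xi}$ and witnesses two properties: (a) $\om^{\theta_\xi}>\mco\xi$, and (b) for every $\eta<\xi$ with $\mco\eta<\om^{\theta_\xi}$, one has $\vartheta(\eta)<\om^{\theta_\xi}$. Both directions of the biconditional fall out of these clauses, so the proof reduces to carefully naming the right witnesses.

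For the forward direction, assume $\vartheta(\xi)<\vartheta(\zeta)$. Clause (a) applied at $\xi$ gives $\mco\xi<\vartheta(\xi)$, and chaining with the hypothesis yields $\mco\xi<\vartheta(\zeta)$, as required.

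For the backward direction, assume $\xi<\zeta$ and $\mco\xi<\vartheta(\zeta)$. Since $\vartheta$-values are $\om$-powers, write $\vartheta(\zeta)=\om^{\theta_\zeta}$. Then $\xi<\zeta$ and $\mco\xi<\om^{\theta_\zeta}$ verify exactly the hypothesis of clause (b) in the definition of $\vartheta(\zeta)$, which hands us $\vartheta(\xi)<\om^{\theta_\zeta}=\vartheta(\zeta)$.

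The statement is essentially a tautology once the definition is parsed correctly, so there is no genuine obstacle; the only care needed is to make sure one does not conflate the bound variable inside the definition with the external $\zeta$ of the proposition. The real work in developing $\vartheta$ — showing that the set in the defining formula is nonempty for every $\xi<\ve_{\Om+1}$ (so that $\vartheta$ is well-defined), that $\vartheta$ is injective, and that its values are additively indecomposable — lies elsewhere and is cited from \cite{BuchholzOrd} rather than derived here.
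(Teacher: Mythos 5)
Your proof is correct, and it is indeed the standard short derivation: both directions follow by unfolding the recursive definition of $\vartheta$, using clause (a) ($\vartheta(\xi)>\mco\xi$) for the forward implication and clause (b) (the closure property defining membership in the minimized set, applied at $\zeta$ with $\xi$ as the witness) for the converse. The paper does not re-prove this fact and simply cites Buchholz, so there is no competing argument to compare against; you are also right that the nontrivial content lies in the well-definedness of $\vartheta$, which is precisely what the citation covers.
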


Since $\zeta^*<\vartheta(\zeta)$, in particular we have that if $\xi<\zeta$ and $\mco\xi \leq \zeta^*$, then $\vartheta(\xi)<\vartheta(\zeta)$.
The flip side is that the value of $\vartheta(\xi)$ can be very large even if $\xi$ is relatively small, provided that $\xi^*$ is large.

Thus, we may ensure that $o_B(4)<o_B(7)$ by `sneaking' a large coefficient into $O_B(7)$.
For this, we first review the natural sum of ordinals, given by $\al \ns 0 = 0 \ns \al = \al $ and
\[
(\om^\al+\be) \ns (\om^\ga+\de)=
\begin{cases}
 \om^\al+(\be \ns (\om^\ga+\de)) &\text{if $\ga \leq \al$,}\\
\om^\ga+ ((\om^\al+\be) \ns  \de) &\text{otherwise,}
\end{cases}
\]
where $\om^\al+\be$, $\om^\ga+\de$ are Cantor decompositions.
The key property of the natural sum is that it is strictly increasing on both components, i.e.~if $\al\leq\al'$, $\be\leq \be'$,  and  at least one of the two inequalities is strict, then $\al\ns\be < \al'\ns\be'$.

Then, $o_B(7)=\vartheta(O_B(7)\ns \bfun_B(7))$, where $ \bfun_B(7)$ is a countable ordinal large enough to ensure monotonicity of the ordinal assignment, by making the maximal coefficient of $O_B(n)$ be large enough.\footnote{For readers familiar with Buchholz's fundamental sequences for $\vartheta$~\cite{BuchholzOrd}, $o^*_B$ plays a role similar to that of $\vartheta^*$.}
In this particular case, we will have $\bfun_B(7) = o_B(6)$, and, in general, $\bfun_B(n)$ will always be either zero or of the form $o_B(b^*)$ for some $b^*\in B$.
For this to work smoothly, $O_B(b)$ will have a slightly different definition when $b\in B$ (see Definition~\ref{defLittleO} below).

In order to make all of this precise, let us first describe a general operation which will allow us to compute $O_B$.

\begin{definition}\label{defOf}
Given $2\leq b $ and a partial function $f\colon \mathbb N \rightharpoonup \Om$ with $[0,b)\subseteq{\rm dom}(f)$, we define $O = O^b_f\colon \mathbb N\to \ve_{\Om+1}$ by writing $x =_b b^ea+r $ and setting
\[O (x) =
\begin{cases}
f(x) &\text{if $x<b$,}\\
\Om^{O (x) }f(a) +O(r) &\text{if $x\geq b $.}
\end{cases}
\]
\end{definition}

The following property will be useful and is easy to check from the definition.
Below, we say that an ordinal $\al$ is a multiple of $\be$ if there is $\ga$ such that $\al=\be\ga$.

\begin{lemma}\label{lemmMultOm}
Let $b\geq 2$ and $f\colon \mathbb N \rightharpoonup \Om$ a partial function with $[0,b)\subseteq{\rm dom}(f)$ and such that $f(n) = 0$ if and only if $n=0$.
Then, for $n\in\mathbb N$, $ O_f^b (n) $ is a multiple of $\Om$ if and only if $n$ is a multiple of $b$.
\end{lemma}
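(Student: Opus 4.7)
I plan to proceed by strong induction on $n$, following the recursive structure of $O := O^b_f$. The base case $n < b$ gives $O(n) = f(n)$, a countable ordinal; such an ordinal is a multiple of $\Om$ precisely when it equals $0$, while $n < b$ is a multiple of $b$ precisely when $n = 0$. The biconditional thus reduces to $f(n) = 0 \Leftrightarrow n = 0$ on $[0,b)$, which I would treat as a standing convention on $f$ implicit in the applications (and indeed necessary for the statement to hold verbatim for arbitrary positive values $f(n) < \Om$).

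For the inductive step $n \geq b$, I would write $n =_b b^e a + r$ with $0 < a < b$, $r < b^e$, and, because $n \geq b$, $e \geq 1$. On the integer side, $b \mid b^e a$, so $b \mid n$ iff $b \mid r$. On the ordinal side, $O(n) = \Om^{O(e)} f(a) + O(r)$, and provided $O(e) \geq 1$, the first summand is a multiple of $\Om$; hence $\Om \mid O(n)$ iff $\Om \mid O(r)$. The inductive hypothesis applied to $r < n$ then links the two biconditionals and closes the step.

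The only real work lies in verifying $O(e) \geq 1$ whenever $e \geq 1$, which I expect to be the main (mild) obstacle, since without it the summand $\Om^{O(e)}f(a)$ collapses to $f(a) < \Om$ and we lose divisibility by $\Om$. I would handle this by running a parallel induction showing $O(k) > 0$ for all $k > 0$: for $k < b$, $O(k) = f(k) > 0$ by the convention on $f$; for $k \geq b$, the recursive expansion $O(k) = \Om^{O(e')} f(a') + O(r')$ has $0 < a' < b$, so $f(a') > 0$, forcing $\Om^{O(e')} f(a') \geq 1$ and hence $O(k) \geq 1$. With this positivity in hand, both inductions close together and the lemma drops out from a direct comparison of the two recursive definitions.
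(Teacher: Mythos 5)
Your induction is the natural (and essentially only) argument, and the paper does not supply its own proof — it simply asserts the lemma is "easy to check from the definition" — so there is no alternative route to compare against. Your proof is correct given the convention you state, and the central step is handled cleanly: once one sees that for $n\geq b$ with $n=_b b^ea+r$ we always have $e\geq 1$, and that $O(e)\geq 1$ forces $\Om^{O(e)}f(a)$ to be a (possibly zero) multiple of $\Om$ lying above $O(r)$ in the $\Om$-decomposition, the biconditional for $O(n)$ reduces to the one for $O(r)$, and descent on $r$ finishes. (Your auxiliary positivity induction is actually unnecessary as a separate induction: for $k\geq b$ the leading summand $\Om^{O(e')}f(a')$ is already $\geq f(a')\geq 1$ directly from $0<a'<b$, without appealing to the inductive hypothesis.)

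What is most valuable in your write-up is the observation that the lemma, as stated, is false for arbitrary $f$, and that the hypothesis $f(m)=0\Leftrightarrow m=0$ on $[0,b)$ is genuinely required — not only for the trivial case $n<b$ but also whenever the descent bottoms out at remainder $r=0$ with $n\geq b$ (then $O(n)=\Om^{O(e)}f(a)+f(0)$, and $f(0)\neq 0$ would break the forward implication). This is not a defect of your proof but of the interface between Lemma~\ref{lemmMultOm} and Definition~\ref{defLittleO}: taken literally, the latter sets $f=o_B\upharpoonright[0,b)$ with $o_B(0)=\vartheta(0)=1\neq 0$, which would make $O^b_B(0)=1$ and falsify the lemma even in the cases where the paper invokes it (e.g.\ in Lemma~\ref{lemmDiffOs}). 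The worked values in Example~\ref{exOFun}, such as $O_B(2)=\Om$ and $O_B(4)=\Om^\Om$, are only consistent with $f(0)=0$ and $f(1)=1$, i.e.\ with $f$ acting as the identity on $[0,\min B)$. So your "standing convention" is precisely what the authors must have intended, and flagging it explicitly is the right call.
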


The operation $O^b_f$ already enjoys a preliminary version of what will be the monotonicity of our ordinal assignment.

\begin{lemma}\label{lemmAcuteO}
Let $b\geq 2$ and $f\colon \mathbb N \rightharpoonup \Om$ a partial function with $[0,b)\subseteq{\rm dom}(f)$.
Then, if $f$ is increasing on $[0,b)$, it follows that $O_f^b$ is increasing on $\mathbb N$.
\end{lemma}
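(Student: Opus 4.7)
The plan is to prove the lemma by strong induction on $n$, simultaneously establishing two statements: (i) $O(m) < O(n)$ for every $m < n$; and (ii) if $n \geq b$ with $b$-decomposition $n =_b b^e a + r$, then the expression $O(n) = \Om^{O(e)} f(a) + O(r)$ is a valid $\Om$-decomposition, i.e., $0 < f(a) < \Om$ and $O(r) < \Om^{O(e)}$. These two statements are entangled, with each used in the other's inductive step, so they must be carried in tandem. The base case $n < b$ reduces directly to the assumption that $f$ is strictly increasing on $[0,b)$.

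For the inductive step with $n \geq b$, I would first establish (ii). The estimate $e < b^e \leq n$ (valid since $b \geq 2$ and $e \geq 1$) ensures $O(e)$ falls under the inductive hypothesis. Next, $a \geq 1$ together with strict monotonicity of $f$ on $[0,b)$ yields $f(a) \geq f(1) > f(0) \geq 0$, so in particular $f(a) > 0$ and, by similar reasoning, $O(e) \geq 1$. The delicate point is $O(r) < \Om^{O(e)}$, which I would extract from an auxiliary induction showing: every $n'' < n$ with $n'' < b^e$ satisfies $O(n'') < \Om^{O(e)}$. For $n'' < b$ this follows from $O(e) \geq 1$; otherwise, writing $n'' =_b b^{e''} a'' + r''$ with $e'' < e$, statement (ii) at $n''$ (available because $n'' < n$) gives a valid $\Om$-decomposition for $O(n'')$, and (i) applied to $e'' < e < n$ yields $O(e'') + 1 \leq O(e)$, whence $O(n'') < \Om^{O(e'') + 1} \leq \Om^{O(e)}$.

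For (i), it suffices to verify $O(n-1) < O(n)$. I would split into cases according to how the $b$-decompositions of $n-1$ and $n$ relate: either $n = b^e$ (so $n-1 < b^e$ and the auxiliary claim above directly gives $O(n-1) < \Om^{O(e)} \leq O(n)$); the two decompositions share their exponent and leading digit (so the only difference is $r' = r - 1 < r$, and the IH gives $O(r') < O(r)$); the exponent agrees but the digit decreases ($a' < a$, exploiting $f(a') + 1 \leq f(a)$ and the auxiliary bound on $O(r')$); or the exponent itself drops ($e' < e$, closed by the auxiliary bound applied to $n - 1 < b^e$). The principal obstacle is the circular-looking dependence between (i) and (ii); this is dissolved by the joint strong induction and by checking in each step that every call of $O$ refers to an argument strictly below $n$.
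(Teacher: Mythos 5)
Your proof is correct and follows essentially the same route as the paper's: induction on $n$ establishing $O(n-1)<O(n)$, with case analysis on how the $b$-decomposition changes when passing from $n$ to $n-1$. Your formulation is somewhat more carefully scaffolded—the paper leaves the validity of the $\Om$-decompositions (your statement (ii)) and the auxiliary bound $O(r)<\Om^{O(e)}$ implicit, and even writes the shorthand $O(b)=\Om$ where only $\Om\leq O(b)$ is needed—but the underlying argument is the same one.
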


\begin{proof}
Let $O(x) = O_f^b(x)$.
We show by induction that if $n>0 $, then $ O(n-1)< O(n)$.
If $n<b$, this follows by assumption on $f$.
Similarly, if $n=b$, then $ O(n-1)<\Om =  O (n)$.

Otherwise, write $n=_b b^ea+r >b$.
If $r>0$, apply the induction hypothesis to $r$.
If $r=0$, then $n-1=_b b^e(a-1)+ b^e-1$.
By induction, $ O(b^e-1) < O(b^e)$, and $f(a-1)<f(a)$ by assumption, so
\[ \ofun(n-1) = \Om^{ O(e)}f(a-1) +  O(b^e-1) <  \Om^{ O(e)}f(a ) = O(n).\]
If $r=0$ and $a=1$, note that $e>1$, since $n>b$.
Then, $b^e-1 =_b b^{e-1}(b-1) + b^{e-1}-1$ and thus $ O(b^e-1) = \Om^{ O(e-1)}f(b-1) + O( b^{e-1}-1)$.
The induction hypothesis yields $O(e-1)< O(e)$ and $ O(b^{e-1}-1)< O(b^{e-1} )$, so since $ f(b-1)<\Om$, $   \ofun(b^e-1)< \ofun(b^e) $.
\end{proof}

With this, we can define the ordinal interpretation we will use to prove termination of the Goodstein process.
For this particular interpretation, the lower base $\baseB n$ of a number $n$ will be more suitable than the upper base (see Definition~\ref{defBase}).
The reason for this is that when $n\in B$, $n$ must `remember' previous ordinals in order to provide the right value for $\bfun_B$, so it is convenient for $n$ to share a base with $n-1$.
We recall that, for non-basic elements, the upper and lower bases coincide.

\begin{definition}\label{defLittleO}
Given a base hierarchy $B$, we define $o_B \colon\mathbb N\to \Omega$ as follows.
If $n<b$, set $o_B(n)=n$.
Otherwise,
\[ o_B(n) =\vartheta( O_B(n) \ns \bfun_B(n)),\]
where $ O_B(n)$, $\bfun_B(n)$ are defined as follows.

Let $b=\baseB n$.
For $f = o_B\upharpoonright [0,b)$, we define an auxiliary operation given by $\ofun^b_B(x) := \ofun_f^b(x)$ for all $x\in\mathbb N$.
Then, consider the following cases.
\medskip

\begin{Cases}

\item {\rm ($n\notin  B\setminus \{\min B\} $).}
Set $\ofun_B(n) = \ofun^b_B(n)$.

\medskip

\item {\rm ($ n\in  B\setminus \{\min B\}$).} Set
\[\ofun _B(n)   = \ofun^b_B(n-b) + o _B(n-b) .\]

\end{Cases}
\medskip

\noindent In either case, let $b^* <n $ be greatest element of $B$ such that $O_B(b^*)+\Om \geq O_B(n) +\Om $, if such a $b^*$ exists, and set $\bfun_B(n) = o_B(b^*)$.
If no such $b^* $ exists, $\bfun_B(n)=0$.
\end{definition}

Although not strictly necessary, is also convenient to set $O_B(n)=O^b_B(n)=n$ and $\bfun_B(n)=0$ if $n<b$, simply to avoid cluttering lemmas with the assumption that $n\geq \max B$.

\begin{example}\label{exOFun}
Let us compute some values of $o_B$ for $ B = \{2,6\}$.
First, we observe that for $n\leq 2$, $o_B$ is just the identity.
We have that $O_B(2) = \Om$ and thus $o_B(2) = \vartheta(\Om)  $.
Similarly, $o_B(4) = \vartheta(\Om^\Om)$.

Now, $6\in B\setminus\{\min B\}$ and $\baseB 6=2$.
We thus have
\[O_B(6) = O^2_B(4)+o_B(4) = \Om^\Om+ \vartheta(\Om^\Om)\]
and $\bfun_B(6) = 0$, since $O_B(2)+\Om<O_B(6)+\Om$.
It follows that $o_B(6) = \vartheta(\Om^\Om+ \vartheta(\Om^\Om) ) $.

Next, we see that $O_B(7) = O_B(6+1) = \Om+1$, and in this case, $\bfun_B(7) = o_B(6) $, so
\[o_B(7) = \vartheta((\Om+1)\ns o_B(6) ) = \vartheta(\Om+ o_B(6)+1). \]
Note that the natural sum is used here to avoid cancellation of $1$.

The next `interesting' case is $o_B(n)$ for $n=6^6 + 6$.
We have that $O_B(n) = \Om^\Om+\Om$.
Thus when computing $\bfun_B(n) $, we see that it is no longer the case that $O_B(6)+\Om \geq O_B(n)+\Om$.
It is also not the case that $O_B(2)+\Om \geq O_B(n)+\Om$.
Hence, $\bfun_B(n) = 0$, and $o_B(n) = \vartheta(\Om^\Om+\Om )$.
\end{example}

\begin{example}\label{exUgBases}
In order to illustrate the behaviour of bases under $o$, let us consider the base hierarchy $C=\{2,4,8\}$.
These bases are as close as they can be to each other, given that each base must divide its successor by definition.

Since $2 = \min C$, we have that $\basep C2=2$, i.e.~$2$ is its own lower base.
As in the previous example, $O_C(2) = \Om$ and $o_C(2) = \vartheta(\Om)$.
Let us also compute $o_C(3)$: here, we have that $\basep C 3=2$ and $3=_2 2+1$, so $O_C(3) = \Om + o_C(1) = \Om+1$.
Meanwhile, $O_C(2)+\Om = \Om\cdot 2 = O_C(3) + \Om$, so $\bfun_C(3) = o_C(2) = \vartheta(\Om)$.
It follows that
\[o_C(3) = \vartheta(O_C(3)\ns \bfun_C(3)) = \vartheta(\Om+\vartheta(\Om)+1 ).\]

Next, we compute $o_C(4)$.
Since $4\in C\setminus \{\min C\}$, the computation is a bit different from that of $o_C(2)$.
Note that $\basep C 4=2$, so $4$ is {\em not} equal to its lower base.
The definition then yields
$O_C(4) = O^2_C (4-2) + o_C(4-2) $.
Since $\basep C 2=2$, we will have that $O^2_C (2) = O_C(2)$ and thus
\[O_C(4) = O_C (2) + o_C(2) = \Om + \vartheta(\Om).\]

Moreover, $O_C(2)+\Om = \Om 2 = O_C(4) + \Om$, so $\bfun_C(4) = o_C(2) = \vartheta(\Om)$.
We thus obtain
\[o_C(4) =\vartheta(O_C(4) \ns o_C(2)) = \vartheta( \Om+\vartheta(\Om)2). \]
The factor $2$ is not an accident, as removing it yields an ordinal below $o_C(3)$.

Finally, we compute $o_C(8)$.
We have that $\basep C 8 =4$, so the definition gives
$O_C(8) = O^4_C (8-4) + o_C(8-4) $.
Unlike in the computation of $o_C(4)$, $\basep C 4\neq 4$, so $O^4_C (4) \neq O_C(4)$.
Instead, $O^4_C(4) = \Om$ and
$O_C(8) = \Om + o_C( 4)$.
Since $O_C(4) +\Om \geq O_C(8) +\Om $, $\bfun_C(8) = o_C(4)$ and
\[o_C(8) = \vartheta(O_C(8)\ns o_C(4)) = \vartheta(\Om + o_C(4) 2).\]
Observe that, as per Definition~\ref{defBase}, $\Base_C(4) = 4$, so here we are using its upper base.

The takeaway is that the base $4$, which is not $\min C$ but is half of its successor, is implicitly assigned two uncountable ordinals: the standard $O_C(4)$, defined in terms of $\basep C 4$, and $O^4_C(4)$, defined in terms of $\Base_C(4)$ and used for computing $O_C(8)$.
In a sense, both ordinals provide `reasonable' interpretations for $4$: in one, we are viewing $4$ as the last number with base $2$, and in the other, as the first with base $4$.
This gives us a bit of wiggle room, and we selected the first option when defining $o_C(4)$ since it encodes the information needed to compute $\bfun_C(n)$ for $n\geq 8$.
The same situation will occur whenever there is $c\in C\setminus \{\min C\}$ such that $S_C(c) =2c$.

Meanwhile, {\em both} of these ordinals are different from the value we would obtain if we removed $4$ from $C$: namely, $O^2_C(4) = \Om^\Om$.
\end{example}

We have already seen how sometimes it is convenient to write numbers in the form $ba+r$ with $r<b$.
Let us see how the large ordinal interpretation behaves in this case.

\begin{lemma}\label{lemmORemain}
Let $B $ be a base hierarchy and $ n = ba+r $ with $r<b$.
Then,
\[O^b_B(n) = O^b_B(ba) + o_B(r).\]
\end{lemma}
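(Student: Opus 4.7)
The plan is to proceed by induction on $a$ in the decomposition $n = ba + r$, following the template of the proof of Lemma~\ref{lemmMonUgRem}\ref{itMonUgRemChg}. For the base case $a = 0$ we have $n = r < b$, so both $O^b_B(r)$ and $O^b_B(0)$ unfold directly via Definition~\ref{defOf} to $o_B(r)$ and $o_B(0)$ respectively, and the equality reduces to $o_B(0) + o_B(r) = o_B(r)$; this holds because $o_B(r) = \vartheta(\cdot)$ is additively indecomposable and, for $r \geq 1$, at least $\omega$, so it absorbs the finite term $o_B(0) = 1$ (and for $r = 0$ the identity is trivial).

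For the inductive step $a \geq 1$ I invoke the $b$-decomposition $n =_b b^e x + y$ with $0 < x < b$ and $y < b^e$. Since $y \equiv r \pmod{b}$ and $y < b^e$, I may uniquely write $y = ba' + r$ with $0 \leq a' < a$, which gives $ba = b^e x + ba'$. If $a' \geq 1$, then $ba =_b b^e x + ba'$ is still a valid $b$-decomposition, and Definition~\ref{defOf} yields
\[O^b_B(n) = \Omega^{O^b_B(e)} o_B(x) + O^b_B(y) \quad \text{and} \quad O^b_B(ba) = \Omega^{O^b_B(e)} o_B(x) + O^b_B(ba').\]
Applying the induction hypothesis to $y = ba' + r$ gives $O^b_B(y) = O^b_B(ba') + o_B(r)$, and substituting closes the case. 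If instead $a' = 0$, then $y = r < b$ and $ba = b^e x$ with trivial remainder, so $O^b_B(ba) = \Omega^{O^b_B(e)} o_B(x) + O^b_B(0)$, and the desired equality once more follows from additive indecomposability of $o_B(r)$ absorbing the finite value $o_B(0)$.

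The main obstacle is spotting the right auxiliary decomposition $y = ba' + r$ that drives the induction: one must observe that the tail $y$ of the $b$-decomposition of $n$ shares its mod-$b$ residue with $n$ itself, so it admits a decomposition of the same shape but with a strictly smaller quotient, which is exactly what is needed to invoke the induction hypothesis. Beyond this, the only subtlety lies in the bookkeeping for the degenerate sub-cases ($a' = 0$ or $r = 0$), handled uniformly by the additive indecomposability of $\vartheta$-values absorbing the finite $o_B(0)$ contributions.
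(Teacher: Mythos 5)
Your proof follows essentially the same approach as the paper: an induction (the paper indexes by $n$, you by $a$, which amounts to the same thing), driven by the observation that the tail $y$ of the $b$-decomposition of $n$ shares $n$'s residue mod $b$ and hence decomposes as $y = ba' + r$ with $a' < a$. The inductive step and the $a' \geq 1$ / $a' = 0$ case split are handled correctly for $r \geq 1$.

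One point to flag: your remark that \emph{``for $r = 0$ the identity is trivial''} is not actually correct, and in fact it contradicts your own (correct) computation $o_B(0) = \vartheta(0) = 1$ a few words earlier. When $r = 0$ the claim reads $O^b_B(ba) = O^b_B(ba) + 1$, which is false: unwinding Definition~\ref{defOf}, $O^b_B(ba)$ always terminates in a finite summand $O^b_B(0) = 1$, so adding $1$ changes it. To be fair, this is an edge case in the lemma as stated rather than a defect specific to your argument---the paper's own proof sketch has the identical issue---and the lemma is only ever invoked with $r>0$, where your absorption argument via additive indecomposability of $o_B(r) \geq \omega$ is exactly right. But you should not have labelled the $r=0$ case trivial; the honest move is to note that the lemma implicitly assumes $r \geq 1$ (or that the $+\,o_B(0)$ term should be read as absent when $r = 0$).
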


\begin{proof}[Proof sketch]
By induction on $n$.
Write $n = _b  b^ex+y$ and observe that either $y=r$ and we can apply Definition~\ref{defOf} directly, or else $b< y <S_B(b) $, and we can apply the induction hypothesis to $y$.
\end{proof}

Following our proof plan, the next step is to prove that the ordinal interpretation $o_B$ is monotone on $n$.

\begin{proposition}\label{propMonO}
Let $B$ be a base hierarchy and $m<n$. Then,
\begin{enumerate}

\item $o_B(m)<o_B(n)$, and

\item if $\baseB m=\baseB n$, then $O_B(m)<O_B(n)$.

\end{enumerate}
\end{proposition}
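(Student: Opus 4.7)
I plan to prove both claims simultaneously by induction on $n$. By transitivity it suffices to treat $m = n - 1$; for the second claim this chaining is sound because $\baseB$ is non-decreasing, so $\baseB m = \baseB n$ forces $\baseB k$ to be constant on $[m,n]$. At the inductive step I would distinguish according to whether $\baseB(n-1) = \baseB n$; the equality fails precisely when $n - 1 \in B \setminus \{\min B\}$, in which case $n - 1 \in B$ forces $n \notin B$ and $\baseB n = n - 1$.

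For the second claim, assume $\baseB(n-1) = \baseB n = b$. If $n \notin B$ (or $n = \min B$), both $O_B(n-1)$ and $O_B(n)$ are given by sub-case~(i) of Definition~\ref{defLittleO} as $O^b_B$, and since the induction hypothesis makes $f := o_B \upharpoonright [0, b)$ strictly increasing, Lemma~\ref{lemmAcuteO} yields $O_B(n-1) < O_B(n)$. If instead $n \in B \setminus \{\min B\}$, then $b \mid n$ and $n \geq 2b$, and writing $n - 1 = b(n/b - 1) + (b - 1)$ with $b - 1 < b$, Lemma~\ref{lemmORemain} identifies $O^b_B(n-1) = O^b_B(n-b) + o_B(b-1)$; comparison with $O_B(n) = O^b_B(n-b) + o_B(n-b)$ from sub-case~(ii), together with the inductive bound $o_B(b-1) < o_B(n-b)$ (valid since $b - 1 < n - b$), completes the case.

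For the first claim I would set $X = O_B(n-1) \oplus \bfun_B(n-1)$ and $Y = O_B(n) \oplus \bfun_B(n)$ and invoke Proposition~\ref{propThetaOrder}. When $\baseB(n-1) = \baseB n$, the second claim plus strict monotonicity of the natural sum gives $X < Y$ (with a short subsidiary argument when $\bfun_B$ drops strictly, showing that the induced jump in $O_B$ absorbs the loss), while $\mco X < \vartheta(Y) = o_B(n)$ follows because every coefficient in $X$ is of the form $o_B(k)$ for some $k < n$ and is therefore strictly below $o_B(n)$ by induction. When $\baseB(n-1) < \baseB n$, so $n - 1 \in B$ and $\baseB n = n - 1$, I would first verify $\bfun_B(n) = o_B(n-1)$ by comparing $O_B(n) + \Om$ with $O_B(n-1) + \Om$ using Lemma~\ref{lemmMultOm}; this yields $\mco Y \geq o_B(n-1) = \vartheta(X)$, and Proposition~\ref{propThetaOrder} applied contrapositively (with $Y < X$ here), together with injectivity of $\vartheta$, delivers $\vartheta(Y) > \vartheta(X)$.

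The hard part will be the bookkeeping for $\bfun_B$ in the equal-bases sub-case when it drops strictly: one must quantify the resulting increase in $O_B$ using the selection rule for $b^*$ in Definition~\ref{defLittleO} and confirm that it dominates the loss in the $\bfun_B$ summand under the natural sum. The subsidiary invariant that every coefficient appearing in $O_B(n)$ is some $o_B(k)$ with $k < n$, needed to control $\mco X$, will be established alongside the main statement by a routine induction.
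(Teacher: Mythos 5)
Your overall plan---prove both claims by induction and control $\vartheta(O_B(\cdot)\ns o^*_B(\cdot))$ via Proposition~\ref{propThetaOrder}---is the same as the paper's, but the reduction to the single step $m=n-1$ with induction on $n$ alone loses exactly the induction hypotheses you end up needing. In the sub-case where the uncountable part of $O_B$ jumps (i.e.\ $O_B(n-1)+\Om < O_B(n)+\Om$, which is precisely when $\bfun_B$ can drop), your closing step is ``every coefficient in $X$ is some $o_B(k)$ with $k<n$ and is therefore strictly below $o_B(n)$ by induction.'' But those $k$ are typically strictly less than $n-1$, so what you need is the first claim for the pair $(k,n)$. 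Under induction on $n$ with only the step $(n-1,n)$, the pair $(k,n)$ is not in the hypothesis, and chaining $o_B(k)<o_B(k+1)<\cdots<o_B(n)$ to reach it requires $o_B(n-1)<o_B(n)$, the very inequality being proved. This is why the paper inducts on $m+n$: the pair $(m',n)$ with $m'<m$ then has smaller weight $m'+n<m+n$ and the hypothesis applies. You cannot bypass this by instead showing $\mco X \leq \mco Y$ either; e.g.\ with $B=\{2,6\}$, $m=11$, $n=12$ one computes $\mco X = o_B(5)\ns o_B(6) > o_B(6) = \mco Y$. So this is a genuine gap, not just bookkeeping.

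A smaller issue: in the case $n-1\in B\setminus\{\min B\}$ you assert ``$Y<X$'' and invoke Proposition~\ref{propThetaOrder} contrapositively. That inequality is false in general: for $C=\{2,4,8\}$ and $n=5$ one has $X=\Om+\vartheta(\Om)\cdot 2$ and $Y=\Om+o_C(4)+1$ with $o_C(4)=\vartheta(\Om+\vartheta(\Om)\cdot 2)>\vartheta(\Om)\cdot 2$, so $Y>X$. The inequality $Y<X$ holds only when $S_B(b')>2b'$ for $b'=\baseB(n-1)$; when $S_B(b')=2b'$ it reverses. Fortunately, you do not need to compare $X$ and $Y$ at all here: once you have $\mco Y\geq o_B(n-1)=\vartheta(X)$, the general fact $\mco\zeta<\vartheta(\zeta)$ gives $\vartheta(X)\leq \mco Y < \vartheta(Y)$ directly. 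This is also how the paper handles it (its Case~2.1 argues $o_B(m)=\bfun_B(n)\leq \mco{(O_B(n)\ns\bfun_B(n))}<o_B(n)$ without comparing the two arguments of $\vartheta$). So fix that line, but the real repair you need is switching to induction on $m+n$ and not collapsing to $m=n-1$.
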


\begin{proof}
By induction on $m + n$.
If $n \leq \min B$  this is clear, so we assume otherwise.
Let $b=\baseB n$ and consider the following cases.
\medskip

\begin{Cases}

\item ($\ofun_B(m) +\Om <  \ofun _ B(n)+\Om$).
Write $\ofun_B(m) =\Om\al+\eta$ and $\ofun _ B(n) = \Om \ga+\de$ with $\eta ,\de <\Om$.
Note that
\[\Om\al+\eta \leq  \Om\al + (\eta  \ns \bfun_B( n)) <\Om\al+\Om \leq \Om\ga,\]
so
\[\ofun _ B(m)\leq \ofun_B(m) \ns \bfun_B(m) <\ofun _ B(n) \leq \ofun _ B(n) \ns \bfun_B(n) ,\]
establishing the second claim when $\baseB m= b$.
We moreover note that $\mco{   O_B(m)}$ is of the form $o\us B(m')$ with $m'< m$.
By the induction hypothesis, $o_B(m') < o_B(m)$.
Similarly, $\bfun_B(m) = o_B(b^*)$ for some $b^* <m$ and hence $\bfun_B(m)<o_B(n)$.
Clearly, $( O_B(m) \oplus  o_B(m) )^* \leq o_B(m') \oplus \bfun_B(m)<o_B(n)$, where the last inequality uses the fact that $\vartheta (\xi)$ is always additively indecomposable. 
We conclude that $o_B(m)<  o_B(n) $ by Proposition~\ref{propThetaOrder}.
\medskip

\item ($ \ofun_B(m) +\Om \geq \ofun_B(n) +\Om$).
This case divides into the following.
\medskip

\begin{Cases}
\item ($m \leq b$).
Let $b'$ be the least element of $B$ greater or equal to $m$.
Since $m \leq b' \leq b<n$, by the induction hypothesis applied to the second claim, we see that
$O_B( m ) \leq O_B( b')$, hence $O_B( b') +\Om\geq O_B(n)+\Om $.
It follows by the definition of $\bfun_B(n)$ that $\bfun_B(n) = o_B(b^*) $ for some $b^*$ with $m\leq b'\leq b^* <n$.
Thus, the induction hypothesis yields
\[o_B(m) \leq o_B(b^*) = \bfun_ B(n) < o_ B( n).\]
\medskip

\item ($m > b$).
Note that in this case, we have that $b<m<n\leq S_B(b)$, so $\baseB m= b= \baseB n$.
With this in mind, we consider two further sub-cases.
\medskip

\begin{Cases}
\item ($ n \notin B$).
Lemma~\ref{lemmAcuteO} yields $O_B(m)<O_B(n)$, establishing the second claim.
For the first, write $m=ba+r$
and $n=bc+s$ with $r,s<b$.
If $a<c$, the second claim yields $O_B(m) < O_B(bc )$, and since the latter is a multiple of $\Om$ by Lemma~\ref{lemmMultOm},
\[O_B(m) + \Om \leq O_B(bc ) < O_B(n)+\Om.\]
This contradicts the assumption that $ O_B(m) + \Om \geq   O_B(n)+\Om$, and thus $a=c$.

Since $O_B(m) = O^b_B(m) $, we have by Lemma~\ref{lemmORemain} and the induction hypothesis for the first claim that
\[O_B(m) = O^b_B(ba) + o_B(r) < O^b_B(ba) + o_B(s)\]
and
\[O_B(m)+\Om= O^b_B(ba) + \Om = O_B(n)+\Om,\]
so $\bfun_B(m)=\bfun_B(n)$ and $O_B(m) \ns\bfun_B(m) < O_B(n) \ns \bfun_B(n) $.
Finally,
\begin{align*}
\mco{( O_B(m) \ns \bfun_B(m))} & = \max\{\mco{   O^b_B(ba) },o_B(r) \ns \bfun_B(m) \} \\
&\leq \max\{\mco{    O^b_B(ba) },o_B(s)\ns \bfun_B(n) \} = \mco{(  O_B(n) \ns \bfun_B(n))},
\end{align*}
so
\[o_B(m) = \vartheta( O_B(m) \ns \bfun_B(m) ) < \vartheta(  O_B(n)\ns \bfun_B(n) ) = o_B(n).\]
\medskip

\item ($n\in B$).
We may assume that $m = n-1$, since otherwise we can apply the induction hypothesis to $m < n-1$.
Write $m=(n-b)+r$ with $r=b-1$.
Since $n>b$ and $n$ is a multiple of $b$, we have that $r<b\leq n-b$, so the induction hypothesis yields $o_B(r)<o_B(n-b)$ and thus
\[O_B(n) = O^b_B(n-b)+o_B(n-b) > O^b_B(n-b) +o_B(r) = O_B(m),\]
establishing the second claim.
Moreover,
\[O_B(m)+\Om= O^b_B(n-b)+\Om = O_B(n)+\Om,\]
so $\bfun_B(m) = \bfun_B(n) $,
and thus
\begin{align*}
\mco{(  O_B(m) \ns \bfun_B(m))} & = \max\{\mco{   O^b_B(n-b) },o_B(r)\ns \bfun_B(m) \} \\
& \leq \max\{\mco{  O^b_B(n-b) },o_B(n-b)\ns \bfun_B(n) \}\\
& = \mco{  (O^b_B(n-b) +o_B(n-b)\ns \bfun_B(n) )}\\
& = \mco{  (O _B(n)  \ns \bfun_B(n) )},
\end{align*}
and thus
\[o_B(m) = \vartheta(  O_B(m)\ns \bfun_B(m) ) < \vartheta(  O_B(n )  \ns \bfun_B(n)) = o_B(n).\]
 \end{Cases}
\end{Cases}
\end{Cases}
\end{proof}

One intuition for why $O_B(n)$ is computed differently depending on whether or not $n\in B$ is that, since   $\ug_B^C n = S_C(\ug(n-1))$, which is not `much larger' than $\ug(n-1)$ when $C$ is a good successor of $B$, we should also have $O_B(n)$ be not too much larger than $O_B(n-1)$.
Accordingly, the recipe for the case $n\notin B$ will yield larger values than that for $n\in B$, as made precise by the following.

\begin{lemma}\label{lemmDiffOs}
Let $B$ be a base hierarchy $b\in B $ and $n=S_B(b) <\infty$.
Then,
\[O^b_B (n)\geq O_B(n)+\Om.\]
\end{lemma}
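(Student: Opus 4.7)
The plan is to reduce the inequality by adding $\Omega$ on both sides and exploiting the absorption of countable ordinals into $\Omega$. First I would observe that, since $n = S_B(b) > b \geq \min B$, we have $n \in B \setminus \{\min B\}$ and $\baseB{n} = b$; Definition~\ref{defLittleO} then gives
\[
O_B(n) = O^b_B(n - b) + o_B(n - b).
\]
Because $o_B(n - b) < \Omega$, ordinal arithmetic yields $o_B(n - b) + \Omega = \Omega$, and by associativity
\[
O_B(n) + \Omega = O^b_B(n - b) + \Omega.
\]
Thus the lemma reduces to the cleaner claim $O^b_B(n) \geq O^b_B(n - b) + \Omega$.

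For this reduced inequality I would invoke the divisibility built into a base hierarchy: $b \mid S_B(b) = n$ and $b \mid b$, hence $b \mid (n-b)$, and $n \geq 2b$ forces $n - b \geq b > 0$. By Lemma~\ref{lemmMultOm}, both $O^b_B(n)$ and $O^b_B(n - b)$ are multiples of $\Omega$. By Lemma~\ref{lemmAcuteO}, applied with $f = o_B \upharpoonright [0,b)$ (which is increasing thanks to the part of Proposition~\ref{propMonO} for arguments below $b$), we have $O^b_B(n - b) < O^b_B(n)$. Writing $O^b_B(n) = \Omega \gamma$ and $O^b_B(n - b) = \Omega \delta$, strictness forces $\delta + 1 \leq \gamma$, so
\[
O^b_B(n) = \Omega \gamma \geq \Omega (\delta + 1) = O^b_B(n - b) + \Omega,
\]
as required.

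The whole argument is only a few lines once the right framing is set up, so I do not anticipate a serious technical obstacle. The conceptual step worth highlighting is the absorption $o_B(n - b) + \Omega = \Omega$, which is what reconciles the two-clause shape of Definition~\ref{defLittleO} (on basic vs.~non-basic arguments) with the extra $\Omega$-gap that the lemma demands. Quantitatively, this records the informal intuition stated after Definition~\ref{defLittleO} that $n \in B$ should contribute ``one $\Omega$ less'' to $O_B$ than the naive recipe $O^b_B$ would, matching the rule $\ug n = S_C(\ug(n-1))$ used to promote bases under a good successor.
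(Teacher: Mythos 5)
Your proof is correct and follows essentially the same route as the paper: unfold Definition~\ref{defLittleO} at the basic argument $n$, use Lemma~\ref{lemmMultOm} to see that $O^b_B(n-b)$ and $O^b_B(n)$ are $\Omega$-multiples, and strict monotonicity (Lemma~\ref{lemmAcuteO}) to get the jump of at least one $\Omega$-step. The one place you are a bit more careful than the paper is spelling out the absorption $o_B(n-b)+\Omega=\Omega$, which the paper leaves implicit after displaying $O_B(n) < O^b_B(n-b)+\Omega$; this is a harmless and arguably helpful elaboration, not a different argument.
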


\begin{proof}
Since $n$ is a multiple of $b$ by the definition of a base hierarchy, $O^b_B(n)$ and $O^b_B(n-b)$ are multiples of $\Om$ by Lemma~\ref{lemmMultOm}.
By definition,
\[ O_B(n) = O^b_B(n-b) + o_B(n-b) < O^b_B(n-b) + \Om. \]
Write $O^b_B(n-b) = \Om\al$ and $O^b_B(n) =\Om\be$.
Since $O^b_B(n-b) <O_B^b(n)$ by Lemma~\ref{lemmAcuteO} and Proposition~\ref{propMonO}, we must have $\al<\be$, and thus $\al+1 \leq \be$.
But then,
\[ O_B(n) < O^b_B(n-b) +\Om = \Om(\al+1) \leq \Om\be = O^b_B(n).\qedhere\]
\end{proof}

\begin{example}
Recall that in Example~\ref{exUgBases}, we had $C =\{2,4,8\}$ and showed that $O^4_C(4) = \Om$, $O_C(4) = \Om + \vartheta(\Om) $, and $O^2_C(4) = \Om^\Om$.
Note that $O_C(4) +\Om < O^2_C(4)$, so the inequality in Lemma~\ref{lemmDiffOs} can be strict.
The general picture is that if $b \in C$ and $c=S_C(b)<\infty$, then
\[O^c_C(c) < O_C(c) < O_C(c) +\Om \leq O^b_C(c).\]
\end{example}

The final ingredient in the proof of termination is the preservation of the ordinal interpretation under the upgrade.
Note that if we wish to prove that $o_B(m)= o_C({\ug}  m)$, we will first need to show that $O_B(m)= O_C({\ug}  m)$.
The following technical lemma will be a first approximation to this.

\begin{lemma}\label{lemmOTech}
Let all transformations be from $B$ to a good successor $C$.
Suppose that $b \geq 2$ is such that for all $m<b$, $o_B(m) = o_C(\ug m) $.
Then, for all $n\in\mathbb N$ and all $c\geq \ug b$, $O^b_B(n) =  O^c_C(\Chgbases bc n)$.
\end{lemma}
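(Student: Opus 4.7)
The plan is to proceed by induction on $n$, following the recursive structure of Definition~\ref{defOf} and using Lemma~\ref{lemmUgProps}\ref{itUgPropsNF} to match the $c$-decomposition of $\Chgbases bc n$ against the $b$-decomposition of $n$.

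For the base case $n < b$, the definition of the deep base-change gives $\Chgbases bc n = \ug n$, and since $\ug n < \ug b \leq c$ by Lemma~\ref{lemmMonUg}\ref{itMonUgOne}, both sides reduce to coefficient values: $O^b_B(n) = o_B(n)$ and $O^c_C(\ug n) = o_C(\ug n)$. These are equal by the hypothesis on $b$.

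For the inductive step, assume $n \geq b$ and write $n =_b b^e a + r$. Lemma~\ref{lemmUgProps}\ref{itUgPropsNF} yields the $c$-decomposition $\Chgbases bc n =_c c^{\Chgbases bc e}\ug a + \Chgbases bc r$. Unfolding Definition~\ref{defOf} on both sides, we obtain
\[ O^b_B(n) = \Om^{O^b_B(e)} o_B(a) + O^b_B(r), \qquad O^c_C(\Chgbases bc n) = \Om^{O^c_C(\Chgbases bc e)} o_C(\ug a) + O^c_C(\Chgbases bc r), \]
where the second equation uses $\ug a < \ug b \leq c$ so that $g(\ug a) = o_C(\ug a)$. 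Since $e, r < n$ for $n \geq b \geq 2$, the induction hypothesis gives $O^b_B(e) = O^c_C(\Chgbases bc e)$ and $O^b_B(r) = O^c_C(\Chgbases bc r)$; and since $a < b$, the hypothesis on $b$ gives $o_B(a) = o_C(\ug a)$. Combining these, the two expressions coincide.

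There is no real obstacle here: the statement is essentially a compatibility assertion between the two recursive definitions of $O^\cdot_\cdot$, and Lemma~\ref{lemmUgProps}\ref{itUgPropsNF} has already done the work of aligning the normal forms. The only point requiring care is to check, in the base case, that $\ug n$ indeed lies in the domain where $O^c_C$ returns $g(\ug n)$, which is guaranteed by the assumption $c \geq \ug b$ together with the monotonicity of $\ug$.
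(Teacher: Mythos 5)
Your proof is correct and follows essentially the same route as the paper's: induction on $n$, unfolding Definition~\ref{defOf} on both sides and matching terms via the hypothesis on digits below $b$ and the induction hypothesis on $e$ and $r$. You are slightly more explicit than the paper (separately treating $n<b$ and invoking Lemma~\ref{lemmUgProps}\ref{itUgPropsNF} to justify that $\Chgbases bc n$ is indeed in $c$-decomposition), but the content is the same.
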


\begin{proof}
Fix $b,c$ satisfying the assumptions and proceed by induction on $n$.
The claim is trivial for $n < \min B$, so we assume otherwise.
Write $n =_b b^ea+r$.
Since $e,r<n$, the induction hypothesis yields
\[O^b_B(n) = \Om^{O^b_B(e)}o_B(a) + O^b_B(r) = \Om^{ O^c_C(\Chgbases bc e)}o_C(\ug a) + O^c_C(\Chgbases bc r) =  O^c_C(\Chgbases bc n).\qedhere \]
\end{proof}

We remark that the assumption that $C$ is a good successor for $B$ is not used explicitly in Lemma~\ref{lemmOTech}, but it ensures that $\ug$ is well defined.
This lemma can then be used in an inductive argument to show that $O_B$ is preserved by the upgrade.

\begin{lemma}\label{lemmPresBigO}
Consider transformations from $B$ to a good successor $C$.
Let $n \in\mathbb N$ and $b =\baseB n$.
Suppose that for all $m < n $, $o_B(m) = o_C(\ug m)$.
Then, $O_B(n) = O_C(\ug n)$.
\end{lemma}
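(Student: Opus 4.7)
The plan is to split on whether or not $n$ lies in $B \setminus \{\min B\}$, which by Lemma~\ref{lemmBasePreserve}\ref{itBasePreserveA} is equivalent to $\ug n$ lying in $C \setminus \{\min C\}$. In either case Definition~\ref{defLittleO} unfolds $O_B(n)$ and $O_C(\ug n)$ into sums of $O^{\cdot}_{\cdot}(\cdot)$ and $o_{\cdot}(\cdot)$ terms; the hypothesis of the lemma handles the $o$-summands, while Lemma~\ref{lemmOTech} is the workhorse for matching the $O^{\cdot}_{\cdot}$ parts.

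Suppose first that $n \notin B \setminus \{\min B\}$, and let $b = \baseB n$, so $O_B(n) = O^b_B(n)$. Since $\ug n \notin C \setminus \{\min C\}$, writing $d = \basep C{\ug n}$ we likewise have $O_C(\ug n) = O^d_C(\ug n)$. By Definition~\ref{defChg} and Lemma~\ref{lemmBasePreserve}\ref{itBasePreserveD}, $\ug n = \Chgbases bd n$, while strict monotonicity of $\ug$ together with the identity $\ug(\min B) = \min C$ forces $\ug b \leq d$. An application of Lemma~\ref{lemmOTech} with $c = d$ then yields $O^b_B(n) = O^d_C(\Chgbases bd n) = O^d_C(\ug n)$.

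Suppose instead that $n \in B \setminus \{\min B\}$, and again set $b = \baseB n$, so $O_B(n) = O^b_B(n - b) + o_B(n - b)$. Lemma~\ref{lemmBasePreserve}\ref{itBaseCalc} gives $\ug n = \ug(n - b) + d$ with $d = \basep C{\ug n}$, so $O_C(\ug n) = O^d_C(\ug(n - b)) + o_C(\ug(n - b))$. The hypothesis applied at $n - b < n$ handles $o_C(\ug(n-b)) = o_B(n-b)$. For the remaining summand, Lemma~\ref{lemmOTech} with $c = d$ (legitimate because $d \geq \ug b$, as noted in the proof of Lemma~\ref{lemmBasePreserve}\ref{itBaseCalc}) gives $O^b_B(n - b) = O^d_C(\Chgbases bd(n - b))$. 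It thus suffices to verify $\Chgbases bd(n - b) = \ug(n - b)$, which by Definition~\ref{defChg} and Lemma~\ref{lemmBasePreserve}\ref{itBasePreserveD} reduces to $\Base_C(\ug(n-b)) = d$. This follows from the fact that $\ug n = S_C(d)$ and $C$ is a base hierarchy, so $d \mid \ug n$ with quotient at least $2$; hence $d \leq \ug n / 2 \leq \ug n - d = \ug(n - b)$, and combined with $d$ being the greatest element of $C$ strictly below $\ug n$, this forces $\Base_C(\ug(n-b)) = d$.

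The main obstacle is this last identification $\Chgbases bd(n - b) = \ug(n - b)$ in the second case, which hinges on the divisibility built into the definition of a base hierarchy in order to pin down the base of $\ug(n-b)$ in $C$. Once this is in hand, the rest is near-mechanical bookkeeping using Lemma~\ref{lemmOTech} and the preceding structural lemmas.
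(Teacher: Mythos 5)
Your proof is correct and follows the same overall skeleton as the paper's: split on whether $n \in B \setminus\{\min B\}$, unfold $O_B(n)$ and $O_C(\ug n)$ via Definition~\ref{defLittleO}, and let Lemma~\ref{lemmOTech} do the matching of the $O^{\cdot}_{\cdot}$ parts. The one place where you deviate is the verification of $\Chgbases bd(n-b)=\ug(n-b)$ in the basic case. The paper obtains this identity by applying Lemma~\ref{lemmMonUgRem}\ref{itMonUgRemChg} and~\ref{itMonUgRemUg} to $n-1 = b\cdot\frac{n-b}{b} + (b-1)$: from $\ug(n-1)=\Chgbases bd(n-1)=\Chgbases bd(n-b)+\ug(b-1)=\ug(n-b)+\ug(b-1)$ one cancels $\ug(b-1)$. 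You instead reduce to pinning down $\Base_C(\ug(n-b))$ and argue from the divisibility built into base hierarchies: since $\ug n = S_C(d)$ and $d\mid \ug n$ with quotient at least $2$, one gets $d\leq \ug n - d = \ug(n-b)<\ug n$, so $d$ is the largest base not exceeding $\ug(n-b)$. This is a nice shortcut -- cleaner than invoking Lemma~\ref{lemmMonUgRem} at this point -- though since your argument still relies on Lemma~\ref{lemmBasePreserve}\ref{itBaseCalc} (whose proof uses Lemma~\ref{lemmMonUgRem}), the ultimate dependency is the same. One cosmetic point: when you invoke Lemma~\ref{lemmBasePreserve}\ref{itBasePreserveD} to translate $\Base_C(\ug(n-b))=d$ into $\Chgbases bd(n-b)=\ug(n-b)$, note that the lemma as stated reads $\basep C{\cdot}$ rather than $\Base_C(\cdot)$, although its proof in the case $n\in B$ actually establishes the $\Base_C$ version; your reading matches what the proof shows, which is what is needed when $n-b$ happens to be a base.
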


\begin{proof} 
If $n<\min B$, then $O_B(n) = n =\ug n =O_C(\ug n) $, and $O_B(\min B) = \Om = O_C(\min C)  = O_C(\ug \min B)$.
Thus we may assume $n>\min B$, so that $1< b:=\baseB n<n$.
Consider the following cases.
\medskip
 
\begin{Cases}

\item ($n\notin B$).
We have by Lemma~\ref{lemmBasePreserve}\pref{itBasePreserveD} that for $d = \basep C{\ug n}$, $\ug n = \Chgbases bd n $.
By Lemma~\ref{lemmBasePreserve}\pref{itBasePreserveA}, $\ug n\notin C$, so Lemma~\ref{lemmOTech} yields
\[O_B(n) = O^b_B(n) =  O^d_C(\Chgbases bd n) = O_C(\ug n). \]
\medskip

\item ($n \in B$).
We have that $\ug n=S_C( \ug(n-1) ) \in C$.
Write $ n-1 = ba+r$ with $r = b - 1$ and
\[\ug(n-1) = \Chgbases bd (n-1) = \Chgbases bd ba +\ug r = \ug ba + \ug r  ,\]
where the second and third equalities are by items \pref{itMonUgRemChg} and \pref{itMonUgRemUg} of Lemma~\ref{lemmMonUgRem}, so in particular, $\Chgbases bd ba  = \ug ba$.
By Lemma~\ref{lemmBaseCalc},
$\ug n=  \Chgbases bd ba + d $ and $\basep C{ \ug(n-1)} = \Base_ C ( \ug(n-1)) = d$ (where the first equality uses $n-1\notin B$, since $n\in B$).
Lemma~\ref{lemmOTech} yields
$ O^d_C (\ug ba ) = O^b_B( ba  )$, so
\begin{align*}
O_C( \ug n) & = O^d_C(\ug n -d ) +  o_C(\ug n-d ) &\text{by Definition~\ref{defLittleO},} \\
& = O^d_C( \Chgbases bd ba ) +  o_C( \ug ba ) &\text{since $\ug n-d = \Chgbases bd ba  = \ug ba$,} \\
& = O^b_B( ba  ) +o_B( ba ) &\text{by Lemma~\ref{lemmOTech} and assumption,}\\
& = O_B(n)&\text{by Definition~\ref{defLittleO}.} 
\end{align*}
\end{Cases}
\end{proof}

With this, we may prove that $o$ is preserved under the base change.
Lemma~\ref{lemmPresBigO} can be used inductively to show that $O$ is preserved, and thus the bulk of the remaining work lies in showing that $o^*$ is also preserved.

\begin{proposition}\label{propPresO}
If $C$ is a good successor for $B$, then $o_B(n) = o_C(\upgrade B^C n)$ for all $n\in\mathbb N$.
\end{proposition}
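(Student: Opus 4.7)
My plan is to proceed by strong induction on $n$. The base case $n < \min B$ is immediate, since $\ug n = n$, both large ordinal interpretations reduce to $n$, and both correction terms vanish.

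For the inductive step, the inductive hypothesis supplies $o_B(m) = o_C(\ug m)$ for all $m < n$. Since $\vartheta$ is injective, it suffices to establish $O_B(n) \ns \bfun_B(n) = O_C(\ug n) \ns \bfun_C(\ug n)$. The IH is exactly the hypothesis of Lemma~\ref{lemmPresBigO}, yielding $O_B(n) = O_C(\ug n)$ immediately. The remaining task is to prove $\bfun_B(n) = \bfun_C(\ug n)$.

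Unfolding Definition~\ref{defLittleO}, $\bfun_B(n) = o_B(b^*)$ where $b^*$ is the greatest element of $B$ strictly below $n$ satisfying $O_B(b^*) + \Om \geq O_B(n) + \Om$, or $0$ if no such $b^*$ exists; an analogous witness $c^*$ governs $\bfun_C(\ug n)$. I aim to show $c^* = \ug b^*$ whenever $b^*$ exists. That $\ug b^*$ is a valid candidate is routine: it lies in $C$ by Lemma~\ref{lemmBasePreserve}\ref{itBasePreserveA}, it is strictly below $\ug n$ by Lemma~\ref{lemmMonUg}\ref{itMonUgOne}, and the required $O$-inequality on the $C$-side follows by applying Lemma~\ref{lemmPresBigO} to both $b^*$ and $n$. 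The IH then yields $o_C(\ug b^*) = o_B(b^*)$, so the two correction terms coincide. The case with no $B$-witness is handled in parallel, arguing that no $C$-witness can exist either.

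The main obstacle is the maximality of $\ug b^*$: if some $c \in C$ satisfied $\ug b^* < c < \ug n$ together with $O_C(c) + \Om \geq O_C(\ug n) + \Om$, I would need to exhibit $b' \in B$ with $b^* < b' < n$ and $\ug b' = c$, and then transport the inequality back via Lemma~\ref{lemmPresBigO} to contradict the maximality of $b^*$. This reduces to showing that $\ug$ sends $B$ bijectively onto the elements of $C$ lying below $\ug n$. Injectivity is Lemma~\ref{lemmMonUg}\ref{itMonUgOne}, and membership is Lemma~\ref{lemmBasePreserve}\ref{itBasePreserveA}; surjectivity is precisely what the good successor axioms—particularly condition~(iii) of Definition~\ref{defGoodSucc}, which forbids spurious multiples of $\Base_C(\ug m)$ in $(\ug m, S_C(\ug m))$ whenever $m + 1 \in B$—are designed to enforce, and I expect cleanly extracting this correspondence to be the most delicate step.
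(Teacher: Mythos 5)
Your overall structure is right: induct on $n$, obtain $O_B(n) = O_C(\ug n)$ from Lemma~\ref{lemmPresBigO}, then reduce everything to $\bfun_B(n) = \bfun_C(\ug n)$. The inequality $\bfun_B(n) \leq \bfun_C(\ug n)$ goes exactly as you sketch and matches the paper. The gap is in the other direction, where you reduce the argument to the claim that $\ug$ sends $B$ bijectively onto the bases of $C$ below $\ug n$. That claim is false. A good successor $C$ can contain many bases not of the form $\ug b'$ for $b' \in B$: with $B = \{2\}$ and $C = B_{+1}$ as in Definition~\ref{defCanon} and Example~\ref{exPlusOne}, $C$ is infinite while $B$ is a singleton, so $\ug$ cannot be onto the bases of $C$ below $\ug n$ once $n \geq 4$. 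Condition~(iii) of Definition~\ref{defGoodSucc} does not secure surjectivity either -- it only limits the jump from $\ug(n-1)$ to $\ug n = S_C(\ug(n-1))$ when $n \in B$, and is silent about bases that $C$ inserts strictly between $\ug(n-1)$ and $\ug n$ for $n \notin B$, which is exactly where the extra bases of the ouroboros successor live.

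So given the witness $c^*$ for $\bfun_C(\ug n)$, you cannot expect a $b' \in B$ with $\ug b' = c^*$ to fall out of a bijectivity lemma. The paper does ultimately show that this particular $c^*$ lies in $\ug[B]$, but via an indirect $O$-value argument rather than any surjectivity: it takes $b^*$ greatest in $B$ with $\ug b^* \leq c^*$, assumes $\ug b^* < c^*$ for contradiction, sets $m = \min\{n, S_B(b^*)\}$ and a suitable neighbour $m'$ with $\baseB{m'} = b^*$, shows $\Chgbases{b^*}{c}{m'} \geq c^*$ for $c = \basep{C}{c^*}$, and then chains Lemmas~\ref{lemmOTech}, \ref{lemmAcuteO}, \ref{lemmDiffOs} to get $O_C(\ug m) \geq O_C(c^*) + \Om$, from which two case splits contradict either the maximality of $b^*$ or the defining property of $c^*$. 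That chain of $O$-estimates is the real content of the hard direction; the bijectivity shortcut you were relying on does not exist.
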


\begin{proof} 
Let all transformations be from $B$ to $C$.
For $n< \min B$, $o_B(n) = o_C(\ug n) = n$, so we may assume that $n\geq \min B$.
Proceeding by induction on $n$, Lemma~\ref{lemmPresBigO} and the induction hypothesis yield $O_B(n) = O_C(\ug n)$.
It remains to show that $\bfun_B(n) = \bfun_C(\ug n)$, from which we can conclude that
\[o_B(n) = \vartheta(O_B(n) \oplus \bfun_B(n)) =  \vartheta(O_C(\ug n) \oplus \bfun_C(\ug n)) =o_C(\ug n) ,\]
as desired.

First, we check that $\bfun_B( n ) \leq \bfun_C({\ug}n )  $.
The case where $\bfun_B( n ) =0$ is immediate, so assume $\bfun_B( n ) = o_B(b^*)$ for some $b^*\in B$.
By Lemma~\ref{lemmPresBigO},
\[ \ofun_C({\ug} b^*) +\Om =\ofun_B(b^*) +\Om  \geq \ofun_B(n)+\Om =  \ofun_C({\ug}n)+\Om.\]
Since $\ug b^*\in C$ by Lemma~\ref{lemmBasePreserve}\pref{itBasePreserveA} and $ \ug b^*< \ug n$ by Lemma~\ref{lemmMonUg}\pref{itMonUgOne}, Definition~\ref{defOf} yields $\bfun_C(\ug n) = o _C(c^*)$ for some $c^* \geq \ug b^*$.
Proposition~\ref{propMonO} and the induction hypothesis then yield $o _B(b^*) = o _C(\ug b^*) \leq  o_C(c^*) $.
We conclude that $\bfun_B(n) \leq \bfun_C(\ug n)$.
\medskip

Next, we check that $\bfun_C({\ug } n) \leq \bfun_B(n)$.
As above, we may assume that $\bfun_C( {\ug } n) =  o_C(c^*)$ for some $c^* \in C$.
Let $b^* $ be greatest so that $b^* \in B  $ and $\upgrade{}b^* \leq c^* $, which exists since $\ug  \min B  = \min C  \leq c^*$.
We claim that $\ug b^* =c^*$.

If not, since $\ug b^* \leq c^*$ by choice of $b^*$, we must have that $\upgrade{}b^* < c^*   < {\ug } n$.
By monotonicity (Lemma~\ref{lemmMonUg}\pref{itMonUgOne}), this implies that $b^*< n$.

\begin{claim}\label{claimBStar}
Let $m = \min \{ n, S_B(b^* ) \}  $ and let
\[
m'=
\begin{cases}
m-1&\text{if $m\in B$,}\\
m&\text{otherwise.}\\
\end{cases}
\]
Then, $\baseB {m'} = \baseB {m} = b^*$ and $\ug  m' \geq c^* $.
\end{claim}

\begin{proof}[Proof of Claim \ref{claimBStar}]
We prove the claim according to the following cases.
\medskip

\begin{CasesA}
\item ($m \notin B$).
We must have that $m=n$, since $S_B(b^* )\in B$.
The definition of $m'$ then yields $m'=m =n$, and thus $ c^* < \ug n = \ug m' $ by choice of $c^*$.
Moreover, $b^*<n\leq S_B(b^*) $, and the last inequality is strict since $S_B(b^*) \in B$.
Since $m'=n$, $\baseB{m'} = \baseB{m} = b^*$ follows.
\medskip

\item ($m \in B$).
Then, $m = S_B(b^*)$ and $m'=m-1\notin B$, since consecutive numbers cannot both be bases, given that every base divides its successor.
We thus have that $b^*\leq m' <m $, but $b^*\in B $, so $b^*\neq m'$ and thus $b^*<m'< m =S_B(b^*)$.
We conclude that $\baseB{m'} = b^* = \baseB m$.
Moreover, since $m\in B$ and $b^*$ was chosen to be the maximum element of $B$ with $\ug b^* \leq c^*$, we must have that $c^* < \ug m $.
By Lemma~\ref{lemmOtherDef}\pref{itOtherDefBase}, $\ug m = S_C(\ug(m-1))$, but $c^*\in C$, so $c^* < S_C(\ug(m-1)) $ can only happen if $\ug (m -1) \geq c^*$.
Thus $\ug m'\geq c^*$, as claimed.\qedhere
\end{CasesA}
\end{proof}

Let $c = \basep C{c^*}$, and note that $ c< c^* $, since $ \min C\leq \ug b^*<c^* $.
We thus have that $ c^* = S_C(c)$.
Moreover, $\ug b^*\in C$ by Lemma~\ref{lemmBasePreserve}\pref{itBasePreserveA}, so $\ug b^*<c^*$ yields $\ug b^*\leq c$.

We claim that $\Chgbases {b^*}c  m' \geq c^* $.
Otherwise, $\Chgbases {b^*}c  m' < c^* =S_C(c) $ and $\ug b^*\leq c$, and since $m'\notin B$, Lemma~\ref{lemmOtherDef}\pref{itBasePreserveOtherDef} implies that $\ug m' = \Chgbases {b^*}d  m'$ for the least $d\leq c$ with $\ug b^* \leq d $ and $\Chgbases {b^*}d  m' <S_C(d )$.
Since $c$ has these properties, $d\leq c$.
Lemma~\ref{lemmMonUg}\pref{itMonUgThree} would then yield $\Chgbases {b^*}d  m' \leq \Chgbases {b^*}c  m' <c_*$ and thus $\ug m' < c^*$, contrary to Claim~\ref{claimBStar}.
We conclude that $\Chgbases {b^*}c  m' \geq c^* $, as desired.

We then have that
\begin{align*}
O_C(\ug m ) & = O_B(m) &\text{by Lemma~\ref{lemmPresBigO},}\\
& \geq O_{B}( m ' ) &\text{by Proposition~\ref{propMonO} and Claim~\ref{claimBStar},}\\
& = O^{b^*}_B ( m'  ) &\text{since $m'\notin B$ and $b^*=\baseB{m'}$,}\\
& =  O^c_C (\Chgbases {b^*}c  m')&\text{by Lemma~\ref{lemmOTech},}\\
& \geq  O^c_C( c^* )&\text{by Lemma~\ref{lemmAcuteO} and $\Chgbases {b^*}c  m' \geq c^* $,}\\
& \geq O_C(c^*) + \Om&\text{by Lemma~\ref{lemmDiffOs} and $c^*=S_C(c)$.}
\end{align*}

By definition of $m$, either $m = S_B(b^*)<n$ or $m=n$.
If $m = S_B(b^*) < n$, we have that $\ug m <\ug n$ by Proposition~\ref{propMonO} and $\ug m \in C$ by Lemma~\ref{lemmBasePreserve}\pref{itBasePreserveA}.
Moreover,
\[O_C(\ug m ) \geq O_C(c^*) + \Om \geq O_C(\ug n), \]
where the second inequality is by choice of $c^*$.
But $O_C(\ug m ) \geq  O_C(\ug n)$ implies that $O_C(\ug m ) +\Om \geq  O_C(\ug n) +\Om$, and thus $c^* \geq \ug m $ by definition of $\bfun_C$.
But Lemma~\ref{lemmMonUg}\pref{itMonUgOne} and Claim~\ref{claimBStar} yield $\ug m>\ug(m-1)= \ug m'\geq c^*$, a contradiction.

If $ m = n$, then
\[O_C(\ug n) +\Om > O_C(\ug n) \geq O_C(c^*)+\Om ,\]
again contradicting our choice of $c^* $, since $o^*_C(n) = o_C(c^*) $ requires $O_C(\ug n) +\Om \leq O_C(c^*)+\Om $.
\medskip

In all cases, we obtain a contradiction from the assumption $\ug b^* < c^*$.
Since $\ug b^* \leq c^*$, we thus conclude that $\ug b^*= c^*$.
By Lemma~\ref{lemmPresBigO},
\[O_B(b^*) +\Om   = O_C( \ug b^*) +\Om \geq O_C(\ug n)+\Om = O_B(n)+\Om. \]
Since $b^*\in B$ and $b^*<n$, Definition~\ref{defOf} yields $\bfun_B(n) = o _B(b^+)$ for some $b^+\geq b^*$, and Proposition~\ref{propMonO} and the induction hypothesis then yield $o _B(b^+) \geq o _B(b^*) = o_C(c^*)$.
We conclude that $\bfun_B(n)\geq \bfun_C(\ug n)$, and thus equality holds.
\end{proof}

We now have all the tools to prove that fractal Goodstein processes are always terminating.

\begin{proof}[Proof of Theorem~\ref{theoTerm}]
Let $o_i$ denote $o_{\mathcal B_i}$ and $\upgrade i$ denote $\upgrade{\mathcal B_i}^{\mathcal B_{i+1}}$.
Let $I\leq \infty$ be the length of the Goodstein sequence starting on $m$.
Then, if $ \G^{\mathcal B}_i(m)>0$, we have that
\[ o_{i+1}(\G^{\mathcal B}_{i+1}(m)) = o_{i+1}(\upgrade i \G^{\mathcal B}_i(m)-1) < o_{i+1}(\upgrade i \G^{\mathcal B}_i(m) ) = o_{i }( \G^{\mathcal B}_i(m) ) , \]
where the first inequality is by Proposition~\ref{propMonO} and the second by Proposition~\ref{propPresO}.
It follows that $(o_i(\G^{\mathcal B}_i(m)))_{i<I}$ is a decreasing sequence of ordinals, hence it is finite, and we must have $o_{I-1}(\G^{\mathcal B}_{I-1}(m)) = 0$.
\end{proof}

\section{The Greedy Dynamical Hierarchy}\label{secOuro}

We have shown that fractal Goodstein processes terminate, regardless of $\mathcal B$; in view of Example~\ref{exClassic}, this can be seen as an alternative proof of Goodstein's original theorem if we set $\mathcal B_i=\{ i+2\}$.
However, for other choices of $\mathcal B$, the proof-theoretic strength of termination is much higher than that of the original theorem.
In order to prove this, we will work with the greedy dynamical hierarchy of Definition~\ref{defODH}.
For the sake of generality, we also consider good successors of the form $B^X$ as given by Definition~\ref{defCanon}, where we defined $\partial B$ to be the set of {\em $B$-critical numbers,} i.e.~the set of nonzero numbers that are multiples of their own base.

 \begin{lemma}\label{lemmEndExt}
Let $B$ be a base hierarchy and $i\in\mathbb N$.
Let $X$ be such that $B\subseteq X\subseteq\partial B$, $C = B ^X $, and for each $n\in\mathbb N$, $C_n = B^X_n $, $c_n = \max C_n$, $\ug_n = \ug_B^{C_n}$, and $\Chgbases \cdot\cdot_n =\Chgbases \cdot\cdot _B^{C_n}$.
Then, for every $n\in\mathbb N$ with $b=\Base_B(n)$:

\begin{enumerate}

\item\label{itEndExtTwo} If $c\in C_n\setminus C_{n-1}$, then $c>\max C_{n-1}$.

\item\label{itEndExtThree} If $x<n$, then $\ug_{n} x = \ug_{n-1} x < \infty$.

\item\label{itEndExtFour}  $\ug_n n = \Chgbases b{c_n}_n n<\infty$.

\item\label{itEndExtOne} If $n\geq \min B$, then $C_n$ is  a good successor for $B{\upharpoonright} _n$.

\end{enumerate}
 
\end{lemma}

\begin{proof}
Proceed by induction on $n$.
In this proof, $\ug=\ug_{n-1}$ and $\Chgbases\cdot\cdot = \Chgbases\cdot\cdot_{n-1}$; note that $\ug x$ for $x<n$ and $\Chgbases xyz$ when $2\leq x\leq n$ are finite by the induction hypothesis applied to \pref{itEndExtOne} and Lemma~\ref{lemmRestrict}.

If $n = \min B$, then \pref{itEndExtOne} is easy to check, since $C_n = \{\min B+1\} $ and $\min B+1>\min B \geq 2$, so $C_n$ is a good successor for $B{\upharpoonright}_{n}$.
The other items are either also easy to check or hold vacuously.
Similarly, if $n\notin X$, then $C_n = C_{n-1} $ and all claims follow directly from the induction hypothesis, so we may assume $n>\min B$ and $n\in X$, so that $C_n=C_{n-1}\cup \{c_n\}$, where $c_n$ is the least multiple of $c_{n-1}$ above $\ug (n-1)$.
Let $b'=\Base_B(n-1)$.
\smallskip

\noindent \pref{itEndExtTwo}
By the induction hypothesis applied to \pref{itEndExtFour} and the choice of $c_n$, $\Chgbases {b'}{c_{n-1}} (n-1) = \ug_{n-1}(n-1)   < c_n $, and since $\Chgbases {b'}{c_{n-1}}(n-1) \geq c_{n-1}$ by Lemma~\ref{lemmMonUg}\pref{itMonUgFour}, we have that $c_n > c_{n-1}$.
\smallskip

\noindent \pref{itEndExtThree}
If $x<n$, then by Lemma~\ref{lemmMonUg}\pref{itMonUgOne}, $\ug x\leq \ug(n-1) <c_n $, and since $C_{n-1} = C_n\upharpoonright_{c_n-1}$, \pref{itEndExtThree} follows from Lemma~\ref{lemmRestrict}\pref{itRestrictTwo}.\smallskip

\noindent \pref{itEndExtFour}
According to Definition~\ref{defChg}, $\ug_n n = \Chgbases b{d} n$, where $d\in C$ is least such that $\ug(n-1) < \Chgbases b{d} n <S_{C_n}(d)$, if such a ${d}$ exists.
We claim that $  d =c_n $.
If $n\in B$, this follows from Lemma~\ref{lemmOtherDef}\pref{itOtherDefBase}, given that $c_n=S_{C_n}(\ug(n-1)) $.
If $n\notin B$, since $\ug(n-1) < {c_n}  < \Chgbases b{c_n } n$ and $S_{C_n}(c_n)=\infty$, it suffices to check that no ${d} < {c_n} $ with $d\in  C_n$ is such that $\ug (n-1) < \Chgbases b{{d}} n <  S_C({d})$.
The induction hypothesis applied to~\pref{itEndExtOne} yields $\ug (n-1) = \Chgbases {b }{c_{n-1}} (n-1) <\infty$.
By Lemma~\ref{lemmBasePreserve}\pref{itBasePreserveCompare}, ${d} \geq c_{n-1}$.
Thus, ${d} \in \{c_{n-1},c_n\}$.
So it remains to rule out $d = c_{n-1}$.
However, since $n\in\partial B$, Lemma~\ref{lemmUgProps}\pref{itUgPropsDiv} yields $c_{n-1}\mid \Chgbases b{c_{n-1}} n  $.
By the way we chose $c_n$, we conclude that  $ c_n \leq \Chgbases b{c_{n-1}} n $, which rules out $d=c_{n-1}$, as this requires $\Chgbases b{c_{n-1}} n < S_{C_n}(c_{n-1}) $.
We conclude that $\ug_n n = \Chgbases b{c_n} n$.
\smallskip

\noindent \pref{itEndExtOne}
Since we chose $c_n$ to be the least multiple of  $c_{n-1}$ above $\ug(n-1)$ and $C_{n-1}$ is a good successor for $B$ (and hence a base hierarchy) by the induction hypothesis, one readily checks using the previous items that every element of $C_n$ divides its successor, $\ug^{C_n}_{B{\upharpoonright}_n}$ is total, and bases are mapped to the next available multiple of the previous base; in other words, that $C_n$ is a good successor of $B{\upharpoonright}_n$.
\end{proof}

With this, we are ready to prove that indeed $B\mapsto B^X$ produces good successors.

\begin{prop}\label{propMonC}
Let $B$ be a base hierarchy, $B\subseteq X\subseteq\partial B$, $C=B^X$, and transformations be from $B$ to $C$.
\begin{enumerate}

\item \label{itMonCOne} $C$ is a  good successor for $B$.
 
\item\label{itMonCTwo} If $n\in X \setminus \{\min B\}$, then $\Base_{C} (\ug(n-1)) < \Base_{C} (\ug n)$.
 
\end{enumerate}
In particular, the above items hold for $\partial B$ in place of $X$ and $B^+$ in place of $C$, and the greedy dynamical hierarchy $\odh$ of Definition~\ref{defODH} is a dynamical hierarchy.
\end{prop}

\begin{proof}
For $n\in\mathbb N$, let $C_n=B^X_n$, $\ug_n=\ug^{C_n}_B$, and $\Chgbases\cdot\cdot_n =\Chgbases\cdot\cdot^{C_n}_B$.
\medskip

\noindent\pref{itMonCOne} $C$ is a base hierarchy, because $\min C = \min B+1>2$, and if $c\in C$ and $S_C(c)<\infty$, then $c,S_C(c)\in C_n$ for large enough $n$, so $c\mid S_C(c)$ by Lemma~\ref{lemmEndExt}\pref{itEndExtOne}.

To see that $C$ is a good successor for $B$, let $n\in\mathbb N$ and let $m$ be large enough so that $S_C(n)\in C_m$.
We have that $C{\upharpoonright}_{ S_C(n)} = C_m {\upharpoonright}_{ S_C(n)}$ in view of Lemma~\ref{lemmEndExt}\pref{itEndExtTwo} (as no bases smaller than $S_C(n)$ are added later), so Lemma~\ref{lemmRestrict}\pref{itRestrictTwo} yields $\ug n=\ug_m n<\infty$ and $\ug n$ is always finite.
If moreover $n$ is a base, then $\ug_m n$ is the least multiple of $\Base_B(n-1)$ above $\ug_m(n-1) =\ug(n-1)$, so $C$ satisfies the required condition on bases.
We conclude that $C$ is a good successor for $B$.
\medskip

\noindent\pref{itMonCTwo} If $n\in X$, then by Lemma~\ref{lemmEndExt}\pref{itEndExtFour}, 
\[\Base_C(\ug(n-1)) = \max C_{n-1} <\max C_n = \Base_C(\ug n ) . \qedhere\]
\end{proof}

It then follows from Theorem~\ref{theoTerm} that the fractal Goodstein process for $\odh$ is always terminating; however, this fact is independent of $\sf KP$.
In order to prove this, we need to review fundamental sequences for the Bachmann-Howard ordinal.

\section{Fundamental Sequences}

In most, if not all, past studies on Goodstein processes, one ordinal interpretation is used to prove both termination and independence.
However, in our case, the ordinal interpretation $o_B$ only provides an upper bound on the order-types of numbers in a Goodstein sequence; it is not `tight' enough to provide a lower bound.
While it is possible to find a tight ordinal interpretation, it becomes rather cumbersome.
Instead, our lower bound will be based on the $\psi$-function, which grows much more slowly than $\vartheta$ (see e.g.~\cite{BuchholzSurvey}).

\begin{definition}
Define
$\psi\colon\ve \us{\Om+1} \to \Om$ inductively by $\psi(0)=1$ and, for $\xi>0$,
\[\psi(\xi) = \sup\{\psi(\zeta) +1 :\zeta<\xi\text{ and } \mco\zeta< \psi(\zeta) \}.\]
\end{definition}

From the definition, one obtains $\psi(n) = n+1$ for $n<\om$ and $\psi(\om) = \om$.
Since $\mco \om = \om=\psi(\om)$, $\psi(\om)$ does not affect the computation of $ \psi(\om+1)$, and thus we also have $\psi(\om+1) = \om$.
Similarly, we have that $\psi(\al) = \om$ whenever $\om\leq\al<\Om$, and thus $\psi(\Om) = \om$ as well.
In general, $\psi$ is non-decreasing, but not strictly increasing.
However, we obtain an increasing function if we restrict the domain to ordinals in $\psi$-normal form.

\begin{definition}\label{defPsiNF}
Given $\xi<\ve_{\Om+1}$, we say that $\psi(\xi)$ is in {\em $\psi$-normal form} if $\mco\xi<\psi(\xi)$.
\end{definition}

It will be useful to note that $\psi(\se \Om n)$ is always in $\psi$-normal form and that if $\xi+1$ is in $\psi$-normal form, then $\psi(\xi+1) = \psi(\xi)+1$, which are not hard to check from the definition.
We remark that $\psi$-normal forms as we have defined them treat $\xi$ as an ordinal, rather than a term; in the latter case, we would also require sub-terms to be in $\psi$-normal form.

Despite their different growth rates,  $\psi$ and $\vartheta$ both provide notations for the Bachmann-Howard ordinal, defined equivalently as $\sup_{n<\om}\psi(\se \Om n)$ or $\sup_{n<\om}\vartheta(\se \Om n)$.
Recall that $\vartheta$ only took additively indecomposable ordinals as values; this was helpful to ease computations, but leaves many `gaps.'
In contrast,  the range of $\psi$ is all of $\psi(\ve_{\Om+1}) $, so we can represent any relevant ordinal in terms of $0$, $\psi$, and the function $(\al,\be,\ga)\mapsto \Om^\al\be+\ga$.

In order to connect ordinal notation systems to $\Sigma^0_1$ independence, we may equip them with {\em fundamental sequences,} i.e.~families of sequences $(\xi[n])_{n\in\mathbb N}$, such that when $\xi$ is a limit, $\xi[n]\to \xi$.
Such sequences exist for countable limits, as well as some uncountable ordinals with countable cofinality.

\begin{definition}\label{defTau}
Let $\xi<\ve_{\Om+1}$.
The {\em cofinality} of $\xi$, denoted $\tau(\xi)$, is given recursively by
\begin{enumerate}

\item $\tau(\xi) = \om $ if $\xi<\Om$,

\item $\tau (\Om^\al \be+\ga) = \tau  (\ga)$ if $0<\ga<\Om^\al$,

\item $\tau (\Om^\al \be ) = \om$ if $\be<\Om$ is a limit,

\item $\tau (\Om^\al (\be+1) ) = \tau(\al)$ if $\al$ is a limit, and

\item $\tau(\Om^ {\al+1}(\be+1)) = \Om$.

\end{enumerate}
\end{definition}

Note that successors are assigned cofinality $\om$, so this is not a `true' cofinality, but this definition is convenient to uniformly define fundamental sequences.
It is convenient to also define fundamental sequences for ordinals $\xi$ of cofinality $\Om$, but in this case, $\xi[\theta]$ must be defined for all $\theta<\Om$.

\begin{definition}
Let
\[\psi[\ve_{\Om+1}] = \{\xi <\ve_{\Om+1} : \mco\xi< \psi(\ve_{\Om+1})\} .\]
Then, for $\xi \in \psi[\ve_{\Om+1}]$ and $\theta<\tau(\xi)$, we define $\xi[\theta]$ inductively by
\begin{multicols}2
\begin{enumerate}

\item $ \fs 0\theta = \fs 1 \theta = 0$,

\item $ \fs{( \Om^\alpha\be + \ga) }{\theta}  =  \Om^\alpha\be + \fs{ \ga }{\theta}  $ if $\ga>0$,

\item $ \fs{ (\Om^\alpha\be)   }{\theta}  =  \Om^\alpha(\fs\be\theta) $ if $\be$ is a limit,

\item $ \fs{ (\Om^\alpha(\be+1))}{\theta}  =  \Om^\al \be +  {\fs {\Om^\al}\theta  } $ if $\be>0$,

\item $ \fs{ \Om^{\al+1}   }{\theta}  =  \Om^\al \theta $,

\item $ \fs{ \Om^\alpha }{\theta}  =  \Om^{\fs \al\theta  } $ if $\al$ is a limit,

\item $\fs{\psi(\zeta)}{\theta}=\psi(\fs\zeta\theta) $ if $\tau(\zeta) = \om$, 

\item if $\tau(\zeta)=\Om$ and $\xi=\psi(\zeta)$, then $\fs{\xi}0 = 0$
and
$\fs{\xi}{\theta+1} = \psi(\fs\zeta{\fs{\xi}\theta })$,

\end{enumerate}
\end{multicols}
\noindent where all expressions are written in either $\Om$-decomposition or $\psi$-normal form, accordingly.
\end{definition}

As a useful example, it is readily checked that $\om[i] = i$ for all $i$, using the fact that $\om=\psi(\Om)$.
These fundamental sequences converge and preserve $\psi$-normal forms (see e.g.~\cite{WeiermannInvestigations}).

\begin{lemma}\label{lemmFS}
If $\la \in \psi[\ve_{\Om+1}]$ is a limit, we have that $(\la[\iota])_{\iota<\tau(\la)}$ is an increasing sequence which converges to $\la$.

Moreover, if $\psi(\zeta)$ is in $\psi$-normal form and $i<\om$, then:
\begin{enumerate}

\item\label{itFSSucc} $\psi(\zeta)$ is a successor if and only if $\zeta$ is a successor, in which case $\psi(\zeta) =  \psi(\zeta[i])+1$ and $\zeta = \zeta[i]+1 $.

\item\label{itFSCount} If $\tau(\zeta) =\om$, then  $ \psi(\fs\zeta i) $ is in $\psi$-normal form.

\item\label{itFSUn} If $\tau(\zeta)=\Om$ and $\xi=\psi(\zeta)$, then $\psi(\fs\zeta{\fs{\xi}i })$ is in $\psi$-normal form.

\end{enumerate}
\end{lemma}

We also note that fundamental sequences `almost always' map uncountable ordinals to uncountable ones.

\begin{lemma}\label{lemmZetaIota}
Let $\zeta \in \psi[\ve_{\Om+1}] $ and $\iota<\tau(\zeta)$.
\begin{enumerate}

\item \label{itZetaIiotaUnc}

If $\zeta >\Om$, then either $ \zeta[\iota]\in \{0,1 \} $ and $\iota=0$, or else $\zeta[\iota]$ is uncountable.

\item \label{itZetaIiotaC} If $\zeta$ is a limit and $\iota=\om\la+\ell$ with $\ell<\om$, then  $\zeta[\iota]$ is of the form $\om \rho + r$ with $r\leq \max\{2,\ell \}$.

\end{enumerate}
\end{lemma}

\begin{proof}
\pref{itZetaIiotaUnc} Write $\zeta=\Om^\eta\al+\ga$ and proceed by induction on $\zeta$.
Since $\zeta > \Om$, $\zeta[\iota]$ is countable only if $\ga=0$.
If $\al $ is a limit, then  $\zeta[\iota]=\Om^\eta(\al[\iota])$, which is either uncountable or possibly zero with $\iota=0$.
If $\al$ is a successor, then $\zeta[\iota]$ is uncountable.

Otherwise, $\al=1$.
If $\eta$ is a limit, then $ \zeta[\iota ]$ can only be countable when $\eta[\iota ] = 0$, whence $\iota=0$ and $\zeta[\iota] =  \Om^0  = 1 $.
Otherwise, $\eta =\delta+1$ is a successor and $\delta>0$ since $\zeta>\Om$, so $\zeta[\iota]$ is uncountable or zero when $\iota=0$.
\medskip

\noindent\pref{itZetaIiotaC}
Proceed by induction on $(\iota,\zeta^*,\zeta)$, ordered lexicographically (i.e., a primary induction on $\iota$, with a secondary induction on $\zeta^*$, and a tertiary induction on $\zeta$).
\smallskip
\begin{Cases}
\item ($\zeta<\Om$). Write $\zeta=\psi(\xi)$ in $\psi$-normal form, so that $\xi^*<\zeta = \zeta^*$.
In this case, $\tau(\zeta)=\om$, so $\iota $ is finite and thus we must have $\iota=\ell$.
Since $\psi(\xi)$ is a limit in $\psi$-normal form, $\xi$ is uncountable; for, if $\xi$ is finite, then  $\psi(\xi) = \xi+1$, which is not a limit, and if $\xi$ is countably infinite, then $\psi(\xi) = \om\leq \xi$, so $\psi(\xi)$ cannot be in $\psi$-normal form.
Consider the following sub-cases.
\medskip

\begin{Cases}
\item ($\xi = \Om$). 
We directly compute $\zeta[\iota] = \iota  $, which is of the desired form since $\iota=\ell$.
\medskip

\item ($\xi > \Om$).
We have that $\zeta[\iota] = \psi(\xi[\iota'])$, where $\iota'$ depends on the cofinality of $\xi$.
By \pref{itZetaIiotaUnc}, either $\xi[\iota']\leq 1$ or $\xi[\iota']$ is uncountable.
In the first case, $\zeta[\iota] \leq \psi(1) = 2$, so we may assume that $\xi[\iota']$ is uncountable.
We consider two further sub-cases, according to the cofinality of $\xi$.
\medskip

\begin{Cases}

\item ($\tau(\xi) =\om$). 
In this case, $\iota'=\iota$.
Since $\xi^*<\zeta=\zeta^*$, we may apply the induction hypothesis to obtain $\xi[\iota] =    \om \rho+r $ with $r\leq\max \{2,\ell\} $ and $\rho>0$ (as we are assuming that $\xi[\iota]$ is uncountable).
Then, using Lemma~\ref{lemmFS}\pref{itFSSucc}, we see that $\zeta[\iota] = \psi(\xi[\iota]) = \psi( \om \lambda+r)  = \psi( \om \lambda )+r$, which is of the desired form since $\psi( \om \lambda )$ is a limit.
\medskip

\item ($\tau(\xi) =\Om$). Either $\iota=0$ and $\zeta[\iota] = 0$, which is of the desired form, or $\zeta[\iota] = \psi(\xi[\iota']) $, where $\iota'=0$ if $\iota=0$ and otherwise $\iota'= \zeta[\iota-1]$.
If $\iota'=0$, we may set $\rho'=r'=0$ and note that $\iota'= \om\rho'+r'$ with  $r' < \max\{2, \ell \}$.
If $\iota'= \zeta[\iota-1]$ the induction hypothesis is available since $\iota-1<\iota$ to obtain $\zeta[\iota-1] = \om\rho'+r'$ with $r'\leq \max\{2,\iota-1\} = \max\{2,\ell -1\}$.
In either case, $\iota' = \om \rho'+r'$ with $r' \leq \max\{2, \ell \}$.
Applying the induction hypothesis once again, this time by $\xi^*<\zeta^*$,
$\xi[\iota'] = \xi[\om\rho'+r']  = \om\rho +r $
for some $r\leq \max\{2,r'\} \leq \max\{2,\ell\}$.
Since we are assuming that $\xi[\iota'] $ is uncountable, $\rho>0$.
Using Lemma~\ref{lemmFS}\pref{itFSSucc}, we see that $\zeta[\iota]=\psi(\xi[\iota'])=\psi(\om\rho +r) = \psi(\om\rho) +r $, which is of the desired form.

\end{Cases}
\end{Cases}
\medskip

\item ($\zeta\geq \Om$). Write $\zeta=\Om^\eta\al+\ga$.
If $\ga>0$, the induction hypothesis applies readily to $\ga$.
Otherwise, $\zeta=\Om^\eta\al$.
We cannot have $\zeta[\iota] $ be a successor if $\al$ is a limit, so we write $\al=\de+1$ and see that $\zeta[\iota] =  \Om^\eta\de + \Om^\eta[\iota]$, which is a successor if and only if $\Om^\eta[\iota]$ is a successor.
If $\eta = \rho+1$, then $\Om^\eta[\iota] = \Om^\rho \iota$, which can only be a successor if $\rho=0$ and $\iota <\om$, in which case we obtain $\zeta[\iota] =  \Om^\eta\de + \iota $, which is of the required form.
If $\eta$ is a limit, $\Om^\eta[\iota] = \Om^{\eta[\iota]}$, which can only be a successor if $\iota=0$ and $\eta[\iota] = 0$, in which case $\zeta[\iota] =  \Om^\eta\de + 1$, again of the desired form.\qedhere
\end{Cases}
\end{proof}

%\begin{lemma}\label{lemmZetaIota}
%Let $\zeta \in \psi[\ve_{\Om+1}] $ and $\iota<\tau(\zeta)$.

%\begin{enumerate}
%\item\label{itZetaIotaUnc} If $\zeta$ is uncountable then either $ \zeta[\iota]\in \{0,1,\iota\} $, or else $\xi[\iota]$ is uncountable.

%\item\label{itZetaIotaC}
%If $ \zeta = \psi(\xi)$ where $\xi$ has uncountable cofinality, then either $\zeta [\iota] \leq \iota + 1$, or else $\psi(\zeta)[\iota]$ is infinite.

%\end{enumerate}
%\end{lemma}

%\begin{proof}
%For the first claim, write $\zeta=\Om^\eta\al+\ga$ and proceed by induction on $\zeta$.
%Since $\zeta\geq \Om$, $\zeta[\iota]$ is countable only if $\ga=0$.
%If $\al $ is a limit then  $\zeta[\iota]=\Om^\eta(\al[\iota])$, which is either uncountable or zero.
%If $\al$ is a successor, then $\zeta[\iota]$ is uncountable.

%Otherwise, $\al=1$.
%If $\eta$ is a limit, then $ \zeta[\iota ]$ can only be countable when $\eta[\iota ] = 0$, whence $\zeta[\iota] =  \Om^0  = 1 $.
%Otherwise, $\eta =\delta+1$ is a successor and $\zeta[\iota]$ is uncountable unless $\delta=0$ or $\iota=0$.
%In either case, we see that $\zeta[\iota]=\iota$.
 
%For the second, $\iota<\om$.
%If $\iota=0$, then $\zeta[0] = 0 $.
%Otherwise, by the induction hypothesis, either $\zeta[\iota]$ is infinite or $\zeta[\iota] \leq \iota+1$.
%Since fundamental sequences are increasing, if $\zeta[\iota]$ is infinite, then so is $\zeta[\iota+1]$.
%Otherwise, by the first item, either $\xi[\iota] $ is uncountable, whence $ \zeta[\iota]$ is infinite, or else $\xi[\iota] \leq \iota $, and since $\iota$ is finite, $\psi(\xi[\iota]) = \xi[\iota]+1 \leq \iota+2$.
%\end{proof}

These fundamental sequences can be used to define very fast-growing functions, such as the function $F$ below.

\begin{definition}\label{defFFun}
For $i<\om$ and $\al<\psi(\ve_{\Om+1})$, define $\fsi\al i$ recursively by $\fsi\al 0 = \al $ and $\fsi\al {i+1} = \fs {\fsi \al i}{i+1} $.
Define $F(n)$ to be the least $\ell$ such that
$\fsi{\psi(\se \Om n)} \ell = 0$.
\end{definition}

The sequence $(\fsi\xi i )_{i<I}$ `steps down the fundamental sequences.'
The function $F$ is total since $ \fsi{\psi(\se \Om n)} {i+1} < \fsi{\psi(\se \Om n)} i$ whenever the right-hand side is not zero, but totality is not provable in $\sf KP$.
More generally, $F$ majorizes all provably total functions of $\sf KP$~\cite{buchholz1988proof,Eguchi}.

\begin{theorem}\label{theoKPInc}
Let $f$ be a computable function and suppose that ${\sf KP}\vdash \forall x\exists y (y=f(x)) $.
Then, $\exists m \ \forall n>m \ \big ( f (n) < F(n) \big)$.
\end{theorem}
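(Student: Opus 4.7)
The plan is to identify $F$ with a standard descent-counting hierarchy along the Bachmann--Howard ordinal $\psi(\ve_{\Om+1})$ and then to invoke the general bounding theorem of Cichon, Buchholz, and Weiermann (CBW), which guarantees that any computable function provably total in a theory $T$ is eventually dominated by any standard hierarchy indexed at the proof-theoretic ordinal $|T|$ along a correct assignment of fundamental sequences. Since $|\sf KP| = \psi(\ve_{\Om+1})$ by Rathjen's analysis \cite{RathjenFragments}, and $F$ as defined counts the length of the canonical descent from $\psi(\se \Om n)$, the theorem will follow once the abstract CBW bounding principle is made to apply.

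First I would verify that the descent $i\mapsto \fsi{\psi(\se\Om n)}{i}$ is a strictly decreasing sequence of ordinals in $\psi$-normal form that reaches $0$ in finitely many steps. Strict decrease and $\psi$-normal form preservation come from Lemma~\ref{lemmFS}, and well-foundedness of $\psi(\ve_{\Om+1})$ guarantees termination, so $F$ is a total computable function. At the same time one checks, using Lemma~\ref{lemmZetaIota} together with Definition~\ref{defTau} and the clauses of $\xi[\theta]$ at $\Om$-cofinal nodes, that the assignment coincides (up to the usual bookkeeping) with Buchholz's canonical fundamental sequences for the $\psi$-function in \cite{BuchholzSurvey}.

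Second I would cite the CBW bounding theorem: for a correct system of fundamental sequences for an ordinal $\alpha$, any computable function $f$ with ${\sf KP}\vdash \forall x\exists y\,(y=f(x))$ is eventually dominated by the Hardy--/fast-growing hierarchy at $\alpha$ whenever $\alpha \geq |\sf KP|$. Specialising to $\alpha=\psi(\ve_{\Om+1})$ and to the descent-length function $n\mapsto F(n)$, which is provably equivalent to the relevant Hardy function by unfolding Definition~\ref{defFFun}, yields an $m$ such that $f(n) < F(n)$ for all $n>m$, as required.

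The main obstacle is not a single calculation but the bureaucratic task of verifying the match between the fundamental sequences defined here and those in Buchholz's presentation, in particular at the collapse clause $\fs{\xi}{\theta+1} = \psi(\fs{\zeta}{\fs{\xi}{\theta}})$ where $\tau(\zeta)=\Om$. Once one confirms that this clause produces the same collapsing fundamental sequence used in \cite{BuchholzSurvey} (i.e.~the relativised $\psi$-collapse of the successor of the previous countable approximant), the CBW principle applies verbatim and the theorem follows.
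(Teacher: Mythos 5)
Your proposal matches the paper's approach exactly: the paper also treats Theorem~\ref{theoKPInc} as a citation result, stating that it is a special case of the general bounding principle of Cichon et al.~(the \texttt{BCW} reference, i.e.~Buchholz--Cichon--Weiermann), applicable to $\psi$ via Buchholz's results~\cite{BuchholzSurvey}. The only difference is that you spell out the verification that the paper's fundamental sequences agree with Buchholz's canonical ones, which the paper leaves implicit.
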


Here, the expression $y=f(x)$ should be represented according to the conventions of Theorem~\ref{theoGoodInd}.

\begin{remark}
Standard proof-theoretic analyses are based on variants of $\psi$ which take only additively indecomposable values (or even strongly critical ones), so there is some verification involved in applying Theorem~\ref{theoKPInc} to our `slow'  $\psi$.
Using the notation of Buchholz~\cite{BuchholzSurvey}, let $\psi^\mathbb H$ denote the collapsing function which takes additively indecomposable values.
According to Theorem 1 of Cichon et al.~\cite{BCW}, we only require that $\psi(\ve_{\Om+1}) = \psi^ \mathbb H(\ve_{\Om+1})$ and that our fundamental sequences satisfy certain regularity properties.
The desired equality can be verified using the identity $\psi^ \mathbb H(\alpha) = \psi(\Omega\alpha)$ and the proofs of regularity in e.g.~\cite{WeiermannInvestigations} specialize to our more restrictive $\psi$.
Lemma~\ref{lemmFS} is an example of this specialization.
\end{remark}

\section{Independence}

Given Theorem~\ref{theoKPInc}, in order to to prove independence of Theorem~\ref{theoTerm}, it suffices to compare our Goodstein process to the process of stepping down the fundamental sequences.
As pointed out above, the ordinal interpretation $o_B$ is not suitable here.
Instead, we will introduce a second ordinal interpretation, $u_B$, based on $\psi$.
The issue here is that $\psi$ does not have some of the key properties of $\vartheta$ which allowed us to define a monotone ordinal assignment, and thus $u_B$ will not be monotone.
This makes $u_B$ unsuitable for proving termination of the fractal Goodstein process yet, surprisingly, it will not be a problem for proving independence.

To illustrate this, let us turn to the proof of independence of the original Goodstein principle.
In this setting, one typically shows that, for $b\geq 2$,
\[\chgbases b\om (n-1) \geq  \chgbases b\om  n  [b-1] \]
(where $ \chgbases b\om  n [x]$ should be read as $(\chgbases b\om  n) [x]$).
Using properties of the fundamental sequences and the ordinal assignment, including monotonicity, this suffices for concluding that the Goodstein process takes longer than the process of stepping down the fundamental sequences below $\ve_0$, which is not provably terminating in $\sf PA$.
But there is a different argument which avoids the use of monotonicity of the ordinal assignment.

Suppose that we show that for every $n$ and $b\geq 2$, there is $n'<\chgbases{b}{b+1}n$ such that $\chgbases {b+1}\om n' =  \chgbases b\om n  [b-1]$.
For example, if $b=3$ and $n=3^3$, we could take $n'=4^2$, and observe that
\[\chgbases 4\om n' = \om^2 = \om^\om[2] = \chgbases 3\om n[2]. \]
It is not too hard to check that such an $n'$ can always be found.

Then, reason as follows.
Let $m\in\mathbb N$ and let $\xi=\chgbases 2\om m$.
Inductively define a sequence $(n_i)$ with $n_0=m$, $n_{i+1}<\chgbases{i+2}{i+3}n_i$, and such that $\chgbases{i+3}\om n_{i+1} = \chgbases{i+2}\om n_i[i+1]$.
By induction on $i$, $\chgbases{i+2}\om n_i = \fsi \xi i $.
Moreover, letting $m_i=\goodp mi{\mathcal B}$, we inductively see that
\[  m_{i+1} =  \chgbases{i+2}{i+3}m_i -1 \geq \chgbases{i+2}{i+3}n_i - 1 \geq n_{i+1};\]
note that the first inequality requires monotonicity of the {\em finitary} base change, but not the {\em in}finitary one.
Thus, $m_i>0$ as long as $\fsi \xi i>0$, and we have avoided using the monotonicity of the ordinal assignment.
We only need for the finitary base change to be monotone, but this corresponds to the monotonicity of the upgrade operator, which will still be available.

Thus our function $u_B$ will be such that, letting $u_i=u_{\odh_i}$ and $\ug^{i+j}_i=\ug_{\odh_i}^{\odh_{i+j}}$, for every $n\in\mathbb N$, there is $n'<\ug^{i+j}_i n$ such that $u_{i+j}(n') = u_i(n)[j]$.
We can then replicate a variant of the above argument to show that the Goodstein process majorizes the function $F$ of Definition~\ref{defFFun}.
The function $u_B$ follows a similar pattern to $o_B$, where a `large' ordinal assignment $U_B$ is first defined, and then `collapsed' to $u_B$.

\begin{definition}
Given a base hierarchy $B$ and $n\in\mathbb N$, define $u_B(n) <\psi(\ve_{\Om+1})$ inductively on $ n$.
If $n<\min B$, then $u_B(n) = n$.

Otherwise, let $b=\Base_B(n)$.
In this case, we also define $U_B(n) \in \psi[\ve_{\Om+1}]$.
Let $f = u_B\upharpoonright [0,b)$ and define $U^b_B(x) = O_f^b(x) $, where $O_f^b$ is as in Definition~\ref{defOf}, and set $U_B(n)  = U_B^b(n)$.
Then, set $u_B(n)  = \psi(U_B(n))$.
\end{definition}

We can deduce some basic properties of $n$ from $U_B(n)$.

\begin{lemma}\label{lemmULimit}
Let $B$ be a base hierarchy and $n \geq \min B$.
\begin{enumerate}

\item\label{itULimitMin} The following are equivalent: $u_B(n)$ is a limit, $U_B(n)$ is a limit, and\linebreak $\min B\mid n$.

\item\label{itULimitBase} If $\tau(U^b_B(n))=\Om$, then $\Base_B(n)\mid n$.

\end{enumerate}
\end{lemma} 

\begin{proof}
\pref{itULimitMin} The equivalence of $u_B(n)$ and $U_B(n)$ being limits is a consequence of Lemma~\ref{lemmFS}\pref{itFSSucc}, so we check that $U_B(n)$ is a limit if and only if $\min B\mid n$.
Let $b=\Base_B(n)$ and $U=U^b_B$ and write $n=_b b^ea+r$, so that $U_B(n)=\Om^{U(e)}u_B(a)+U(r)$.
Since $\min B\mid b$, we have that if $r=0$, then $\min B\mid n$ and $U_B(n)$ is a limit.
If $0<r<\min B$, then $\min B\nmid n$ and $U(r) =  r$, so $U_B(n)$ is not a limit.
Otherwise, by induction, we have that $U(r)$ is a limit if and only if $\min B\mid r$, hence $U_B(r)$ is a limit if and only if $\min B\mid n$.
\medskip

\noindent\pref{itULimitBase} If $\tau(U_B(n)) = \Om$, then $\Om\mid U_B(n)$, so by Lemma~\ref{lemmMultOm}, $\Base_B(n)\mid n$.\qedhere

\end{proof}

\begin{remark}\label{remOmCof}
Note that \pref{itULimitBase} is not an equivalence, since for example if $b= \Base_B(n)$ and $n=_b b^1a$ with $u_B(a) $ a limit, then $\tau(U_B(n)) = \om$, yet $b \mid n$.
However, we will not need to precisely identify those $n$ with $\tau(U_B(n)) = \Om$, and the condition $\Base_B(n)\mid n$ is a reasonable approximation.
\end{remark}

Despite being simpler to work with in many ways, the main drawback of the $\psi$ function over $\vartheta$ is the need to check whether ordinals are in normal form before performing computations with them.
Likewise, the $u_B$ and $U_B$ functions come with normal forms that allow them to work as intended, despite the lack of monotonicity.

\begin{definition}
Let $B$ be a base hierarchy and $b\in B$.
Given $n\in\mathbb N$ and $b\geq 2$, we simultaneously define $n $ to be in {\em $u_B$-normal form,} {\em $U_B$-normal form,} and {\em $U^b_B$-normal form} inductively on $\max \{n,b\}$ as follows.

\begin{enumerate}

\item If $n<\min B$, then $n$ is in $u_B$-normal form, $U^b_B$-normal form, and $U_B$-normal form.

\item If $n=_b b^ea+r$, then $n$ is in $U^b_B$-normal form if and only if $a$ is in $u_B$-normal form, $e$, $r$ are in $U^b_B$-normal form, and
\[\Om^{U^b_B(e)}u_B(a) +U^b_B(r)\]
is in $\Om$-decomposition.

\item If $b=\Base_B(n)$, $n$ is in $U_B$-normal form if and only if $n$ is in $U^b_B$-normal form, and $n$ is in $u_B$-normal form if and only if $n$ is in $U_B$-normal form and for all $c\in\coeffs b n$, $u_B(c) < u_B(n)$.

\end{enumerate}

\end{definition}

The notion of $u_B$-normal forms is closely related to that of $\psi$-normal forms.

\begin{lemma}\label{lemmMC}
Let $B$ be a base hierarchy, $n\geq \min B$, and $b=\Base_B(n)$.
Then, $n$ is in $u_B$-normal form if and only if $n$ is in $U_B$-normal form and $\psi(  U_B(n))$ is in  $\psi$-normal form.
\end{lemma}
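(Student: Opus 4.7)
The plan is to reduce the lemma to establishing the identity
\[\mco{U_B(n)} = \max\{u_B(c) \col c \in \coeffs{b}{n}\},\]
valid whenever $n$ is in $U_B$-normal form (with the convention that the maximum of $\varnothing$ is $0$). Once this identity is available, the lemma follows directly: by definition, $\psi(U_B(n))$ is in $\psi$-normal form exactly when $\mco{U_B(n)} < \psi(U_B(n)) = u_B(n)$, and the displayed identity shows that this inequality is equivalent to $u_B(c) < u_B(n)$ for every $c \in \coeffs{b}{n}$. That last condition is precisely the extra clause that distinguishes $u_B$-normality from $U_B$-normality.

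To prove the identity, I would argue by induction on $x$ that whenever $x$ is in $U^b_B$-normal form, $\mco{U^b_B(x)} = \max\{u_B(c) \col c\in\coeffs{b}{x}\}$. The base case $x<b$ is immediate: $U^b_B(x) = u_B(x) < \Om$, so $\mco{U^b_B(x)} = u_B(x)$, while $\coeffs{b}{x} = \{x\}$. For the inductive step, write $x =_b b^e a + r$; the $U^b_B$-normality assumption means that
\[U^b_B(x) = \Om^{U^b_B(e)}u_B(a) + U^b_B(r)\]
is an $\Om$-decomposition, whence $\mco{U^b_B(x)} = \max\{u_B(a),\,\mco{U^b_B(e)},\,\mco{U^b_B(r)}\}$. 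Since $a<b$ gives $\coeffs{b}{a} = \{a\}$, the recursive definition of $b$-digits yields $\coeffs{b}{x} = \{a\} \cup \coeffs{b}{e} \cup \coeffs{b}{r}$, so applying the induction hypothesis to $e$ and $r$ identifies the maximum.

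The one delicate point is making sure the induction hypothesis is actually available for $e$ and $r$: the standing assumption only provides their \emph{$U_B$-normal} form, a notion tied to $\Base_B(e)$ and $\Base_B(r)$, whereas the induction calls for $U^b_B$-normality with respect to the outer base $b$. I would reconcile this by observing that $n<S_B(b)$ together with $n\geq b^e$ forces $b^e < S_B(b)$, which rules out any base of $B$ in the interval $(b,e]$ or $(b,r]$; hence $\Base_B(e),\Base_B(r)\leq b$. If the inequality is strict on one side, then that argument actually lies below $b$ and falls into the trivial base case of the induction; if equality holds, then $U_B$-normality on that argument literally coincides with $U^b_B$-normality and the hypothesis applies verbatim. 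This calibration between bases is the main obstacle; once it is settled, the identity collapses to an unfolding of definitions. The edge case $n<\min B$ is handled separately, since there $n$ is declared by fiat to be in every relevant normal form.
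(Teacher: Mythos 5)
Your proof is correct and takes essentially the same approach as the paper: both rest on the identity $\mco{U_B(n)} = \max u_B[\coeffs b n]$ for $n$ in $U_B$-normal form, from which the equivalence is immediate via the definition of $\psi$-normal form. The paper dismisses the identity with ``not hard to check,'' whereas you supply the induction on $U^b_B$-normal forms together with the base-reconciliation for $e$ and $r$, which is exactly the content being elided.
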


\begin{proof}
Let $b=\Base_B(n)$.
The $U_B$-normal form condition is unchanged from the original definition and is not hard to check that $ \mco{(U_B(n))} = \max u_B[\coeffs bn]$.
Thus, $ \mco{(U_B(n))} < \psi(U_B(n)) $ if and only if for every $c\in \coeffs bn$, $u_B(c) <u_B(n)$.
\end{proof}

\begin{example}\label{exUFun}
Let $B=\{2,6\}$ as in Example~\ref{exOFun} and let us compute some values of $u_B$.
We have that $u_B(0) =  0$  and $u_B(1) =   1$, since $u_B$ is the identity below $\min B$.
We then note that $U_B(2) = \Om$, so $u_B(2) = \psi(\Om) = \om$.
Similarly, $U_B(4) = \Om^\Om$ and $u_B(4) = \psi(\Om^\Om)$.

Next, we see that $U_B(6) = U_B^6(6) = \Om $, so $u_B(6)=\psi(\Om )  =u_B(2)$.
This already shows that $u_B$ is not an injective function.
In fact it is not even non-decreasing, as $u_B(4) >  u_B(6)$.
\end{example}

Although the map $u_B$ is not monotone, we do need it to be preserved under the upgrade operation.
The following lemma will be useful in establishing this.

\begin{lemma}\label{lemmPresU}
Let all transformations be from $B$ to a good successor $C$.
Suppose that $b\in B$ is such that for all $a < b$, $u_B( a ) = u_C(\ug a ) $, and if $a$ is in $u_B$-normal form, then $\ug a$ is in $u_C$-normal form.
Then, for all $n\in\mathbb N$ and $c\geq \ug b$, $U^b_B(n)= U^c_C(\Chgbases bc n)$.
Moreover, if $n$ is in $U^b_B$-normal form, then $\Chgbases bc n$ is in $U^c_C$-normal form.
\end{lemma}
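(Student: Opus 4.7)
The plan is to proceed by induction on $n$, simultaneously establishing the equality $U^b_B(n) = U^c_C(\Chgbases bc n)$ and the normal form preservation. For the base case $n<b$, Definitions~\ref{defOf} and~\ref{defChg} give $U^b_B(n)=u_B(n)$ and $\Chgbases bc n = \ug n$; by Lemma~\ref{lemmMonUg}\ref{itMonUgOne}, $\ug n < \ug b \leq c$, so $U^c_C(\ug n) = u_C(\ug n)$. The standing hypothesis then delivers both $u_B(n)=u_C(\ug n)$ and the fact that $\ug n$ is in $u_C$-normal form whenever $n$ is in $u_B$-normal form, which is exactly what $U^b_B$- and $U^c_C$-normal forms amount to below $b$ and $c$ respectively.

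For the inductive step, write $n =_b b^e a + r$ and apply Lemma~\ref{lemmUgProps}\ref{itUgPropsNF} to obtain the $c$-decomposition $\Chgbases bc n =_c c^{\Chgbases bc e}\ug a + \Chgbases bc r$. Unfolding the two recursive definitions gives
\[U^b_B(n)=\Om^{U^b_B(e)}u_B(a)+U^b_B(r)\quad\text{and}\quad U^c_C(\Chgbases bc n)=\Om^{U^c_C(\Chgbases bc e)}u_C(\ug a)+U^c_C(\Chgbases bc r),\]
and the inductive hypothesis applied to $e,r<n$, together with the hypothesis applied to $a<b$, equates the two expressions term by term. The $\Om$-decomposition condition of $U^c_C$-normal form transfers thanks to this equality, and the hypothesis also yields that $\ug a$ is in $u_C$-normal form.

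The most delicate step is verifying that $\Chgbases bc e$ and $\Chgbases bc r$ are in $U_C$-normal form (and not merely in $U^c_C$-normal form), since $U^b_B$-normal form of $n$ demands that $e$ and $r$ be in $U_B$-normal form, i.e., in $U^{\Base_B(e)}_B$- and $U^{\Base_B(r)}_B$-normal form, and these bases can differ from $b$. I would split on whether $e<b$ or $e\geq b$: in the first case $\Chgbases bc e = \ug e < c$ and $U_C$-normal form of $\ug e$ collapses to $u_C$-normal form, which follows from the hypothesis applied to $e$; in the second case $\Base_B(e)\geq b$ and $\Chgbases bc e\geq c$, and one re-invokes the inductive argument at the pair $(\Base_B(e),\Base_C(\Chgbases bc e))$ in place of $(b,c)$, and symmetrically for $r$. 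The main obstacle is precisely this bookkeeping between $U^b_B$-normal form at the top and $U_B$-normal form at recursive subparts: the bases need not coincide, so the inductive statement must be strong enough to cover arbitrary compatible base pairs, with the identity $u_B(x)=u_C(\ug x)$ for $x<\Base_B(e)$ being extracted from the equalities already proved by the induction at smaller values, rather than from the original hypothesis (which only assumes it for $a<b$).
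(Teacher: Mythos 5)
Your argument for the value equality $U^b_B(n) = U^c_C(\Chgbases bc n)$ matches the paper's: induction on $n$, Lemma~\ref{lemmUgProps}\ref{itUgPropsNF} to obtain the $c$-decomposition of $\Chgbases bc n$, the hypothesis applied to the digit $a<b$, the induction hypothesis to $e,r<n$. For the normal-form clause the paper's own proof simply asserts that $e$, $r$ are in $U^b_B$-normal form and reapplies the induction hypothesis at the fixed pair $(b,c)$, handling the $c$-digits of $\Chgbases bc n$ via Lemma~\ref{lemmUgProps}\ref{itUgPropsCoeffs} and the hypothesis. You are right that this assertion deserves a second look, since the definition formally only gives that $e,r$ are in $U_B$-normal form; the two notions agree when $e\geq b$ (then $\Base_B(e)=b$, because $b\leq e<n$), and when $e<b$ the $b$-decomposition of $e$ is $b^0\cdot e+0$, so $U^b_B$-normal form of $e$ amounts to $u_B$-normal form of $e$, which is exactly what makes the lemma's hypothesis applicable to $\ug e=\Chgbases bc e$. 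This is the case split you describe.

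The final sentence of your proposal, however, contains an error. You claim that the identity $u_B(x)=u_C(\ug x)$ for $x<\Base_B(e)$ cannot come from the lemma's stated hypothesis (which assumes it only for $a<b$) and must instead be extracted from equalities produced earlier in the induction. But $e<n$ forces $\Base_B(e)\leq\Base_B(n)=b$, so every $x<\Base_B(e)$ already satisfies $x<b$ and is covered by the hypothesis as given. Consequently there is no need for a strengthened inductive statement ranging over "arbitrary compatible base pairs"; the paper's single induction on $n$ at the fixed pair $(b,c)$ suffices, once the $e<b$ subcase is read as above.
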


\begin{proof}
Proceed by induction on $n$.
The case $n=0$ is trivial.
Otherwise, write $n =_b b^ea+r$.
Then, $ \Chgbases bc n = _c  c^{\Chgbases bc e} \ug a+{\Chgbases bc r} $.
We have by assumption that $u_C(\ug a) = u_B(a)$.
The induction hypothesis then yields  
\[U^b_B(n) = \Om ^{U^b_B(e)}u_B( a )+ U^b_B(r) = \Om ^{U^c_C(\Chgbases bc e)}u_C(\ug a) + U^c_C(\Chgbases bc r) = U^c_C(\Chgbases bc n). \]

If moreover $n$ is in $U^b_B$-normal form and $y\in \coeffs c{ \Chgbases bc n}$, then by Lemma~\ref{lemmUgProps}\pref{itUgPropsCoeffs}, $y =\ug x$ for some $x\in \coeffs bn$, which by assumption implies that $y$ is in $u_C$-normal form.
Moreover, $e$ and $r$ are in $U^b_B$-normal form, which by the induction hypothesis implies that $ \Chgbases bc  e$ and $\Chgbases bc r$ are in $U^c_C$-normal form.
Finally, $\Om^{U^c_C(\Chgbases bc e) } u_C(\ug a) +U^c_C(\Chgbases bc r)$ is in $\Om$-decomposition since $\Om^{ U^b_C( e )} u_B(  a) +U^b_B(  r)$ is in $\Om$-decomposition and the respective ordinals are equal.
\end{proof}

With this, we show that the upgrade preserves not only the values of $u$, but also preserves $u$-normal forms.

\begin{proposition}\label{propPresU}
With transformations from $B$ to a good successor $ C$, for all $n\in\mathbb N$, $u_B(n)= u_C(\ug n)$ and $U_B(n)= U_C(\ug n)$.
Moreover, if $ n $ is in $u_B$-normal form, then $ \ug n$ is in $u_C$-normal form, and if $ n $ is in $U_B$-normal form, then $ \ug n$ is in $U_C$-normal form.
\end{proposition}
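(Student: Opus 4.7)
My plan is to proceed by simultaneous induction on $n$, establishing all four claims — the value equalities $u_B(n)=u_C(\ug n)$ and $U_B(n)=U_C(\ug n)$ together with the two normal-form preservations — at once. The joint induction is necessary because Lemma~\ref{lemmPresU} takes as a hypothesis preservation of $u_B$-normal form below $b$, not merely the numerical identity $u_B(a)=u_C(\ug a)$.

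For $n<\min B$ everything is immediate, since $\ug n=n<\min B\leq \min C$ and all definitions reduce to the identity. For $n\geq \min B$ let $b=\Base_B(n)$; the induction hypothesis applied to all $a<b$ supplies precisely the premises of Lemma~\ref{lemmPresU}. I then split into two cases mirroring Lemma~\ref{lemmPresBigO}. If $n\in B$ then $b=n$ and, by Lemma~\ref{lemmBasePreserve}\ref{itBasePreserveA}, $\ug n\in C$ with $\Base_C(\ug n)=\ug n$; a direct unwinding of Definitions~\ref{defLittleO} and~\ref{defOf} reduces both $U_B(n)=U^n_B(n)$ and $U_C(\ug n)=U^{\ug n}_C(\ug n)$ to $\Omega$, using $u_B(1)=u_C(1)=1$, and the $U$-normal-form conditions are trivially satisfied. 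If $n\notin B$, Lemma~\ref{lemmBasePreserve}\ref{itBasePreserveD} identifies $d:=\basep C{\ug n}$ with $\ug n=\Chgbases bd n$, and Lemma~\ref{lemmPresU} directly yields $U_B(n)=U^b_B(n)=U^d_C(\ug n)=U_C(\ug n)$ together with the transfer of $U_B$-normal form to $U_C$-normal form. Applying $\psi$ gives the corresponding equation for $u$.

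To strengthen $U_C$-normal form of $\ug n$ into $u_C$-normal form, by definition it remains to check that $u_C(y)<u_C(\ug n)$ for every $y\in \coeffs {\Base_C(\ug n)}{\ug n}$. Lemma~\ref{lemmUgProps}\ref{itUgPropsCoeffs} identifies each such $y$ with $\ug x$ for some $x\in\coeffs b n$, and the induction hypothesis combined with the assumed $u_B$-normal form of $n$ gives $u_C(\ug x)=u_B(x)<u_B(n)=u_C(\ug n)$, as required.

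The main obstacle I anticipate is the bookkeeping in the case $n\in B$: the base shifts abruptly from $\Base_B(n)=n$ to $\Base_C(\ug n)=\ug n$, so one must verify that both uncountable ordinals collapse to $\Omega$. This reduces to an elementary calculation once one observes that $\min B,\min C\geq 2$ forces $u_B(1)=u_C(1)=1$. The remainder of the argument is structurally parallel to Lemma~\ref{lemmPresBigO}, but is notably simpler because no analogue of the auxiliary $\bfun$ operator enters the picture.
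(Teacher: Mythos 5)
Your proof is correct and follows the same route as the paper: induction on $n$, with Lemma~\ref{lemmPresU} handling the transfer of $U$-values and $U$-normal forms, and Lemma~\ref{lemmUgProps}\ref{itUgPropsCoeffs} handling the digit condition for $u$-normal form. The only superfluous step is your case split between $n\in B$ and $n\notin B$: unlike $O_B$, which in Definition~\ref{defLittleO} has a separate clause when $n\in B\setminus\{\min B\}$, the definition of $U_B$ is uniform, so Lemma~\ref{lemmBasePreserve}\ref{itBasePreserveD} gives $\ug n=\Chgbases bc n$ with $c=\Base_C(\ug n)$ in both cases, and Lemma~\ref{lemmPresU} then applies directly without the explicit $\Omega$-computation you carry out for $n\in B$.
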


\begin{proof}
Proceed to prove all claims by induction on $n$.
We first prove that $U_B(n)= U_C(\ug n)$.
Let $b=\Base_B(n)$ and $c=\Base_C(\ug n)$.
By Lemma~\ref{lemmBasePreserve}\pref{itBasePreserveD}, 
$\ug n = \Chgbases bc n $.
The induction hypothesis yields that for $x\in\coeffs bn$, $u_B(x) = u_C(\ug x)$, and if $x$ is in $u_B$-normal form, then $\ug x$ is in $u_C$-normal form.
Lemma~\ref{lemmPresU} then yields $U_B(n) = U^b_B(n) = U^c_C(\ug n) =  U_C(\ug n) $.
From the condition $U_B(n) =    U_C(\ug n) $, we then obtain
\[ u_B(n) = \psi( U_B(n)) =  \psi(  U_C(\ug n)) = u_C(\ug n). \]

Now, assume that $n$ is in $u_B$-normal form.
Then, $n$ is in $U^b_B$-normal form, so by Lemma~\ref{lemmPresU}, $\ug n = \Chgbases bc n$ is in $U^c_C $-normal form, which since $c=\Base_C(\ug n)$ implies that $\ug n$ is in $U_C$-normal form.
Let $y \in \coeffs c{\ug n} $.
By Lemma~\ref{lemmUgProps}\pref{itUgPropsCoeffs}, $y=\ug x $ for some $x\in \coeffs bn$.
Since $n$ is in $u_B$-normal form, $x$ is in $u_B$-normal form and $u_B(x)<u_B(n)$.
By the induction hypothesis, $y = \ug x$ is in $u_C$-normal form and
\[u_C(y ) = u_B(x)<u_B(n)=u_C(\ug n),\]
where the first equality is by the induction hypothesis and we have already established the second.
\end{proof}

Next, our goal is to match up the fundamental sequences with values of the new ordinal interpretation.
This is where greedy successors come in.
To elucidate the relation between them and the fundamental sequences, let $B $ be a base hierarchy and $C=B^+$.
Suppose that $B$ can `compute fundamental sequences' for $n$ up to some $i\in\mathbb N$, in the sense that there is $m<n $ such that $ u_B(m) = u_B(n)[i]$.
The claim is that $C$ can then compute fundamental sequences for $\ug n$  up to $i+1$.
The critical case is that where $\zeta = U_B(n)$ has uncountable cofinality.
Then, $ u_B(n)[i+1] = \psi(\zeta [u_B(n)[i] ]) = \psi(\zeta [u_B(m) ])$.
Intuitively, $\zeta [  u_B(m) ]$ is obtained by replacing the `rightmost' occurrence of $\Om$ by $u_B(m) $, e.g.~if $\zeta= \Om^\Om $, then $u_B(n)[i+1] = \psi( \Om^{u_B(m)})$.
If $b=\Base_B(n)$, this would be reflected by replacing the `rightmost' occurrence of $b$ by $m$, so that if $n=b^b$, we would have $u_B(n)[i+1] = u_B(b^{m})$.
But, we may not be able to do this, since we might have that $m>b$.
 
However, the situation changes on $C$.
Since $\zeta$ has uncountable cofinality, it must be a multiple of $\Om$, so $n$ is a multiple of $b$ by Lemma~\ref{lemmULimit}\pref{itULimitBase}; in other words, $n\in\partial B$.
Using Proposition~\ref{propMonC}\pref{itMonCTwo}, we see that $\ug n = \Chgbases bc n$, where $c > \ug(n-1)$, i.e.~we added a base between $\ug(n-1)$ and $\ug n$, which by monotonicity, yields $\ug m <c $.
But that means that $ \ug m$ is now available as a $c$-digit, so we can indeed substitute it for the rightmost $c$ in $\ug n $, thus obtaining $c^{\ug m} < \ug n $ with $u_C(c^{\ug m}) = u_C(\ug n)[i+1] = u_B( n)[i+1]$.
In the countable cofinality case, we simply replace the rightmost coefficient $a$ by $a'$ with $u_C(a') = u_C(a)[i+1]$, assuming inductively that $C$ can also compute the fundamental sequences for $a$.
This idea of replacing the rightmost ocurrence of a base or coefficient to simulate the fundamental sequences is made precise in the next lemma.

\newcommand{\diffn}{\Delta}

\begin{lemma}\label{lemmUCFS}
Let $B$ be a base hierarchy, $ b\geq 2$, and $s\in\mathbb N$ be in $U^b_B$-normal form.
Set $U = U^b_B$.

\begin{enumerate}

\item\label{itUCSFSucc} If $U(s) = \xi+1$, then $s>0$, $U(s-1) = \xi$, and $s-1$ is in $U^b_B$-normal form.

\item\label{itUCSFLimit} If $U(s)$ is a limit, suppose that $\iota <\tau(U (s)) $ and $\diffn \in (0, \min B]$ are such that $  \iota = u_B(c)$ for some $c\leq b-\diffn$ in $u_B$-normal form, and either $\iota$ is infinite, or for every $b$-digit $a$ of $s$, if $U(a)$ is a limit, then there is $a'\leq a -\diffn$ in $u_B$-normal form such that $u_B(a') = u_B(a)[\iota]$.

Then, there is $s'< s$ in $U^b_B$-normal form such that $U ( s' ) =   U (s)  [ \iota ] $ and $s' \leq s-\diffn $.

\end{enumerate}
\end{lemma}

\begin{proof} 
We prove both claims simultaneously by induction on $s+b$. 
\smallskip

\begin{Cases}
\item ($s < b$). Then, $U(s) = u_B(s) < \Om$ and thus has countable cofinality, so $\iota $ is finite.
Consider the following sub-cases.
\medskip

\begin{Cases}
\item ($U(s)$ is a successor). In this case, either $s<\min B$, $U(s) = s $, and $U(s)[i] = s-1$ by Lemma~\ref{lemmFS}\pref{itFSSucc}, so we can take $s'=s-1 $, or else $b':= \Base_B(s) <b $ and we can again take $s'=s-1$ by the induction hypothesis given that $s+b'<s+b$.
\medskip

\item  ($U(s)$ is a limit). By assumption, we can find $s' \leq s -  \diffn$ in $u_B$-normal form (which, since $s'<b$, implies that $s'$ is in $U^b_B$-normal form by definition) with $u_B(s') = u_B(s) [\iota]$, and since $s'<s <b$, we have that $U(s')=u_B(s')$, so
\[U(s') = u_B(s') = u_B(s) [\iota] = U(s)[\iota].\]
\end{Cases}\medskip

\item ($s \geq b$). Write  $s=_b b^ea+r$ and $U(s)=_\Om \Om^\eta\al+\ga$.\medskip

\begin{Cases}
\item ($\ga>0$). Then, $ U(   r) = \ga$.
We apply the induction hypothesis to find suitable $r'$ with either $r'=r-1$ if $U(s)$ (hence $U(r)$) is a successor, and otherwise $r'\leq  r- \diffn$ if $U(r)$ is a limit, and set  $s' = { b}^{  e} a +r' $, which respectively is either $s-1$ or at most $s-\diffn$.
Since $U(r')<U(r)$ and  $\Om^{U(e)} u_B(a) +U(r)$ is in $\Om$-normal form, we easily deduce that $\Om^{U(e)} u_B(a) +U(r')$ is in $\Om$-normal form, from which we readily see that $ s'$ is in $U^b_B$-normal form.
\medskip

\item ($\ga=0$). In this case, $U(s)$ is a limit, so we only need to establish the second claim.
Moreover, $  U(a)=u_B(a) =\al $.
We consider cases according to $\al$.
\smallskip

\begin{Cases}
\item  ($ \al $ is a limit).
The assumption on digits yields $a' \leq  a -\diffn$ in $u_B$-normal form such that $\al[\iota] = u_B(a)[\iota] =  a' $, and thus we may set
\[s' =  b^{  e}a' \leq b^{  e}a - b^e \leq b^{  e}a - \min B \leq s -\diffn \]
(where the second inequality uses $e\geq 1$ since $s\geq b$) and see that
\[U(s') = \Om^{U(e)}u_B(a') = \Om^{U(e)}(u_B(a )[\iota]) = ( \Om^{U(e)} u_B(a )) [\iota] = U(s)[\iota]. \]
The expression $\Om^{U(e)}u_B(a') $ is trivially in $\Om$-decomposition, and we conclude that $s'$ is in $U^b_B$-normal form.
\medskip

\item ($1<\al = \de+1 $). Then, $(\Om^\eta\al)[\iota] = \Om^\eta\de + \Om^\eta[\iota]$.
Let $a' = a-1$ and $b'=\Base_B(a)\leq b$.
Since $a'+b'<s+b$, the induction hypothesis applied to the first claim yields $U_B(a)=U^{b'}_B(a)=U^{b'}_B(a') +1 $.
Since $U_B(a) $ is not a limit, by Lemma~\ref{lemmULimit}\pref{itULimitMin}, $\min B\nmid a$, and thus $a\notin B$.
It follows that $a'\geq b'$, so $b'=\Base_B(a')$ and thus $U_B(a')=U^{b'}_B(a')$, hence $u_B(a) = \de$.
We must also have that $  U(b^e)  = \Om^\eta $, so the induction hypothesis yields suitable $r'\leq b^e-\diffn$ so that $b^ea'+r'\leq s-\diffn$ and
\[(\Om^\eta(\de+1))[\iota] = \Om^\eta\de + \Om^\eta[\iota] = U (b^ea'+r').\]
The $\Om$-decomposition condition is deduced from $U(r') < U(r)$ as above.
\medskip

\item($\al = 1$). We see  that $ U (e) = \eta$, so we apply the induction hypothesis to find suitable $e'<e$ with $u_B(e') = \eta [\iota]$.
Let $a'=1$ if $\eta$ is a limit, otherwise, $a' =c$.
We may then set $s'=b^{e'}a'$, and once again, $\Om^{U(e')}u_B(a')$ is in $\Om$-decomposition trivially.
If $\eta$ is a limit, then $e' \leq e-\diffn$, so
\[b^{e'} \leq   b^{e-\diffn} \leq b^e -\diffn ,\]
where the second inequality is immediate from the fact that $x\mapsto b^x$ is strictly increasing.
Otherwise, 
\[b^{e'}c \leq b^{e'}(b-\diffn) \leq b^e -\diffn .\qedhere\]
\end{Cases}
\end{Cases}
\end{Cases}
\end{proof}

The general picture is that as we iteratively apply the upgrade from $\odh_i$ to $\odh_{i+1}$, we can compute more and more of the fundamental sequences.
As a base case, we note that we can always compute them up to $0$.

\begin{lemma}\label{lemmFSZero}
Let $B$ be any base hierarchy and $n>0$ be in $u_B$-normal form.
Then, there is $n'<n$ in $u_B$-normal form such that $u_B(n') = u_B(n)[0]$.
If, moreover, $u_B(n)$ is a limit, then we can take $n' \leq n-\min B$.
\end{lemma}

\begin{proof}
Proceed by induction on $n$.
If $n<\min B$, then take $n'=n-1$, and note that both are in $u_B$-normal form by definition.
If $\tau(U_B(n)) = \Om$, then $ u_B(n)[0] = 0$ and we can take $n'=0$, which is in $u_B$-normal form.
Since $n\geq \min B$, $0\leq n-\min B$.
Otherwise, we may apply Lemma~\ref{lemmUCFS} to $U_B(n)$, where in the limit case we set $\iota=0$ and $\Delta=\min B $.
\end{proof}

The situation for $i>0$ is more subtle.
Recall that we write $\ug_i$ for $\ug_{\odh_i}^{\odh_{i+1}}$.
Then, $\odh_1$ can compute fundamental sequences up to $1$, but only for numbers of the form $\ug _0 n $.
Similarly, $\odh_1$ can compute fundamental sequences up to $2$, but only for numbers of the form $\ug _1\ug _0 n $.
More generally, $\odh_i$ can compute fundamental sequences up to $i$, but only to numbers that have been successively upgraded $i$ times.

In order to prove this uniformly, it is convenient to identify a key structural property of $\ug \mathbb N$, i.e., of the image of the upgrade operator.
Let $B$ be a base hierarchy and $X\subseteq \mathbb N$.
We say that $X$ is {\em $B$-digit closed} if whenever $n\in X$, $b:=\Base_B(n)\in X$, and if $d\in\coeffs b n $, it follows that $d\in X$.
Obviously, $\mathbb N$ is always $B$-digit closed, and the following is readily verified using Lemma~\ref{lemmUgProps}\pref{itUgPropsCoeffs}.

\begin{lemma}\label{lemmDigitClosed}
Let transformations be from a base hierarchy $B$ to a good successor $C$.
Then, if $X\subseteq \mathbb N$ is $B$-digit closed, it follows that $\ug X$ is $C$-digit closed.
\end{lemma}

The takeaway is that if $B$ computes fundamental sequences up to $i$ for any $n\in X$, where $X$ is $B$-digit closed, then $B^+$ computes fundamental sequences up to $i+1$ for any $n\in \ug X$.
The following lemma makes this precise.
Below, note that neither $m'\in X$ nor $n'\in \ug X$ are required (and, in general, this will not be the case).
 
\begin{lemma}\label{lemmOuroFS}
Let transformations be from a base hierarchy $B$ to $C=B^+$.
Suppose that $X\subseteq \mathbb N$ is $B$-digit closed, and $i >0$ and $\Delta\leq\min B$ are such that for every $m\in X$, $m$ is in $u_B$-normal form and there exists $m'<m$ in $u_B$-normal form such that $u_B(m')= u_B(m)[i-1]$, and such that if $u_B(m) $ is a limit, then $m'\leq m-\Delta$.

Then, if $n \in X $, there is $n'<\ug n$ such that $n'$ is in $u_C$-normal form and $u_C(n') = u_B(n )[i]$.
Moreover:
\begin{enumerate}

\item \label{itOuroFSSucc} If  $u_B(n ) $ is a successor, then $n'=n-1$.

\item \label{itOuroFSLim} If $u_B(n ) $ is a limit, then we can take $n'\leq n-\Delta$.

\end{enumerate}
\end{lemma}

\begin{proof}
Proceed by induction on $n$.
Let $b=\Base_B (n)$, $\xi= u_C(\ug n)$, and $c=\Base_C(\ug n)$.
If $ n <\min B$, then we must have $\xi[i] = n-1$, so we can take $n'=n-1$.
If $n\in B$, then $\xi=\om $ and we take $n' = i =\om[i]$.
Note that the assumption that $u_B(\min B)[i-1] = u_B(m')$ for some $m'\leq \min B-\Delta$ yields $i-1\leq\min B-\Delta$, so $i \leq \min B+1-\Delta=\min C-\Delta\leq \ug n-\Delta$.
 
Otherwise, let $\zeta = U_B(n)$ and note that $\zeta>\Om$, since $n\notin B$.
By Proposition~\ref{propPresU}, $\zeta  = U_C(\ug n)$, and $ \xi = \psi(\zeta )$ is in $\psi$-normal form by Lemma~\ref{lemmMC}.
Then, $\xi[i] = \psi(\zeta' ) $ with $\zeta' = \zeta[\iota]$, where either $\tau(\zeta) = \om$ and $\iota = i$, or else, $\tau(\zeta) = \Om$ and $\iota = \xi[i-1]$ (unless $\tau(\zeta) = \Om$ and $ i= 0$, but in this case we can just set $n'=0$).
Then, consider the following cases.
\smallskip

\begin{Cases}
\item ($\zeta'<\Om$).
Note in this case that $\zeta$ is a limit, for otherwise we would have $\zeta= \zeta'+1<\Om$, contrary to assumption.
Moreover, $\zeta'   \in \{0,1\}$ by Lemma~\ref{lemmZetaIota}\pref{itZetaIiotaUnc}, and thus $\xi[i] =  \psi(\zeta') \in\{1,2\} $.
Note that since $B$ is a base hierarchy, $\min B\geq 2$, so $\min C\geq \min B+1 \geq 3$, and hence we can take $n'= \xi[i] <\min C$, which is in $u_C$-normal form by default.
Since $n\notin B$, $n\geq \min C +1\geq \min B+2$, and hence $n' \leq 2 \leq n-\min B \leq n-\Delta $.
\medskip

\item ($\zeta'\geq  \Om$). By Lemma~\ref{lemmDigitClosed}, $\ug X$ is $C$-closed, given that $X$ is $B$-closed. Thus by the induction hypothesis, $U^{c}_{C}( \ug n)$   satisfies the assumptions of Lemma~\ref{lemmUCFS}\pref{itUCSFLimit} regarding $c$-digits. With this in mind, consider two sub-cases according to the cofinality of $\zeta$.\smallskip
\begin{Cases}

\item ($\tau ( \zeta)=\om)$.
In this case, $\zeta' = \zeta[i]$. 
By Lemma~\ref{lemmUCFS}, there is $n'$ in $U^c_C$-normal form with either $n'=n-1$ if $\zeta$ is a successor or $n' \leq \ug n -  \Delta$ if $\zeta $ is a limit, and  such that $U^c_C(n') = \zeta[i]$.
If $n'<c$, then $n'$ is in $u_C$-normal form since it is a $c$-digit in $U^c_C$-normal form.
Otherwise, $c\leq n' <n$, so $\Base_C(n') = c$.
Thus, $U _C(n') = U^c_C(n') = \zeta[i]$, and $n'$ is in $U _C$-normal form.
Moreover, $u_C(n') = \psi( \zeta[i])$, which is in $\psi$-normal form by Lemma~\ref{lemmFS}\pref{itFSCount}, so Lemma~\ref{lemmMC} implies that $n'$ is in $u_C$-normal form, as needed.
\medskip

\item ($\tau ( \zeta) =\Om)$.
In this case, $\xi$ must be a limit and $\zeta' =\zeta[\xi[i-1]]$.
Let $c'=\Base_C(\ug {n-1})$.
We claim that there is $ s < c $ such that $ s  $ is in $u_C$-normal form and $u_C(s) = \xi[i-1] $.
The assumption on $B$ yields $s'\leq n-\Delta$ such that $u_B(s') = u_B(n)[i-1]$ and $s'$ is in $u_B$-normal form.
Taking $s = \ug s'$, we see by Proposition~\ref{propPresU} that
\[u_C(s ) = u_B(s ') = u_B(n)[i-1]= u_C(\ug n)[i-1]=\xi[i-1]\]
and $s$ is in $u_C$-normal form.
Moreover, since $s'\leq n-\Delta$, we have that $s'\leq (n-1)-(\Delta-1)$, so 
\[s=\ug s' \leq \ug( (n-1)-(\Delta-1)) \leq \ug (n-1)-(\Delta-1) \leq  c  - \Delta ,\]
where the first two inequalities use Lemma~\ref{lemmMonUg}\pref{itMonUgOne} and the third uses Proposition~\ref{propMonC}\pref{itMonCTwo},
so $U^{c }_{C}(\ug n )$ satisfies the assumptions of Lemma~\ref{lemmUCFS} with $\iota := u_C(s)$.
Hence, there is $n' \leq \ug n -\Delta $ such that $U^{c}_{ C} (n') = \zeta[\xi[i-1]] = \zeta' $ and $n'$ is in $U^{c}_C$-normal form.
If $n'<c$, then it must be in $u_C$-normal form.
Otherwise, $\Base_C(\ug n)=c \leq n'<n$, so $\Base_C(n')=c$ and $n'$ is in $U_C$-normal form.
Finally, $u_C(n') = \psi(\zeta')$, which is in $\psi$-normal form by Lemma~\ref{lemmFS}.
We conclude that $n'$ is in $u_C$-normal form, as required.\qedhere
\end{Cases}
\end{Cases}
\end{proof}
 
Henceforth, we will focus our attention on the greedy dynamical hierarchy $\odh=(\odh_i)_{i\in\mathbb N}$.
Recall that when a dynamical hierarchy is fixed, we may write $\ug_i $ instead of $\ug_{\mathcal B_i}^{\mathcal B_{i+1}}$.
More generally, for $\ell\geq i$, we will define $\ug_i^\ell $ by letting $\ug_i^i $ be the identity and $\ug_i^{\ell+1} = \ug _\ell \circ \ug_i^{\ell} $.
As the next proposition shows, as $\ell$ increases, $\odh_{\ell}$ can compute more and more of the fundamental sequences of $\ug_i^{\ell}n $.

\begin{proposition}\label{propOuroFS}
Let $\mathcal D$ be the greedy dynamical base hierarchy.
Write $u_i$ for $u_{\mathcal D_i}$ and let $i,j\in\mathbb N$.
Suppose that $n>0 $ is in  $u_i$-normal form.
Then, there is $n'<\ug_i^{i+j} n$ such that $n'$ is in $u_i$-normal form and $u_{i+j}(n') = u_i(n )[j]$.
If moreover $u_i(n ) $ is a limit, then we can take $n'\leq n-i-2$.
\end{proposition}
 
\begin{proof}
For $j\in\mathbb N$, let $X_j = \ug^{i+j}_i\mathbb N$; we must find suitable $n'$ for $n\in X_j$.
For $j=0$, we have that $X_0 =\mathbb N$, so $n'$ can be found by Lemma~\ref{lemmFSZero}, given that $i+2=\min\odh_i$.
Otherwise, assume inductively the claim for all $m\in X_j$.
Note that $X_{j+1} = \ug_j X_j$, so
Lemma~\ref{lemmOuroFS}\pref{itOuroFSLim} then yields the desired $n'$.
\end{proof}

We are now ready to prove that the termination of the Goodstein process for the greedy dynamical hierarchy is independent of $\sf KP$.
Below, $\se xy$ refers to the superexponential function, given by $\se x 0=1$ and $\se x {y+1} = x^{\se x y}$, and $[x]_{y}$ denotes the remainder of $x$ modulo $y$.

\begin{proof}[Proof of Theorem~\ref{theoGoodInd}]
Consider the function $G \colon\mathbb N\to \mathbb N$ such that $G(k)$ is the termination time of the Goodstein sequence for $\odh$ starting on $\se 2 {k}+1$.
Clearly, Theorem~\ref{theoTerm} proves that $G$ is a total function over $\sf KP$ (or much weaker systems).
We show that $k\mapsto G(\se 2 {k}+1)$ grows faster than the function $F $ of Definition~\ref{defFFun}, and thus $G$ is not provably total in $\sf KP$ by Theorem~\ref{theoKPInc}; hence, Theorem~\ref{theoTerm} is not provable in $\sf KP$.

Fix $k$ and let $I= F(k)$.
For $i<I$, let $m_i = \mathbb G^\odh_i(\se 2 {k} +1 )$ and $\xi  = u_1(m_1) = \se \Om {k}$.
Letting $u_i = u_{\odh_i}$, we show that there are sequences $(t_i)_{i<I}$ and $(n_i)_{i<I}$ of natural numbers, where $(t_i)_{i<I}$ is increasing and for every $i<I$, $u_{t_i}(n_i ) = \xi \llbracket i\rrbracket $.
This will yield $  m_{t_i}>0$ whenever $i<I$, and thus $G( \se 2 {k}+1)\geq F(k) $.

To be precise, we require the following for all $i<I$:
\begin{multicols}2
\begin{enumerate}

\item\label{itGoodIndOne} if $j<i$, then $t_j<t_{i}$; 

\item\label{itGoodIndOneb} $ n_i $ is in $u_{t_i}$-normal form;

\item\label{itGoodIndTwo} $u_{t_i}(n_i ) = \xi \llbracket i\rrbracket $;

\item\label{itGoodIndTwob} $n_i \in \ug_{t_i-i}^ {t_i} \mathbb N$;

\item\label{itGoodIndThree} letting $b_i:=t_i+2$, we have that

$t_i\geq 2i+[n_i]_{b_i} + 1$, and

\item\label{itGoodIndFour}  $ n_i \leq m_{t_i}$.

\end{enumerate}
\end{multicols}
\noindent Conditions \pref{itGoodIndOne}, \pref{itGoodIndTwo}, and \pref{itGoodIndFour} are sufficient to obtain the desired lower bound, but the others are needed to ensure progress of the recursive construction.

We define $n_i$ recursively as follows.
Set $n_0=m_1$ and $t_0=1$, which are readily checked to satisfy all required properties.
Then, suppose that $i+1<I$ and $n_i$ has been defined and satisfies \pref{itGoodIndOne}-\pref{itGoodIndFour}.
By \pref{itGoodIndTwob},    $n_i  \in \ug _{t_i-i } ^ {t_i} \mathbb N $,  and by \pref{itGoodIndThree}, $ t_i-i  \geq i+1$.
Using Proposition~\ref{propOuroFS}, choose $s_0<\ug_{t_{i} }  n_i$ in $u_{t_i+1}$-normal form such that $u_{t_i+1 }(s_0) =  u_{t_i }(n_i )[i+1] $, with $s_0= \ug_{t_{i} } n_i-1$ if $u_{t_i }(n_i )$ is a successor and $s_0 \leq \ug_{t_i} n - i-3$ if $u_{t_i }(n_i )$ is a limit.
Since, by \pref{itGoodIndTwo}, $u_{t_i }(n_i ) = \xi\llbracket i\rrbracket $, $u_{t_i+1 }(s_0) =  \xi\llbracket i + 1\rrbracket $.

Now, consider two cases.
\smallskip

\begin{Cases}

\item ($u_{t_i }(n_i ) $ is a successor).
Set $n_{i+1} = s_{0}$ and $t_{i+1} = t_{i }+1$.
We check that all required properties hold.
\smallskip

\noindent \pref{itGoodIndOne} Clearly, $t_{i+1} > t_{i } $.
\smallskip

\noindent \pref{itGoodIndOneb}-\pref{itGoodIndTwo} Follow from the fact that $s_{0}$ was chosen to be $u_{t_{i }+1}$-normal form with $u_{t_i+1 }(s_0) =  \xi\llbracket i + 1\rrbracket $.
\smallskip

\noindent \pref{itGoodIndTwob} 
Use~\pref{itGoodIndTwob} inductively to write $n_i = \ug_{t_i-i}^ {t_i} (x+1)$. Lemma~\ref{lemmMonUgRem}\pref{itMonUgRemSucc} and backwards induction on $j \leq i$ show that $\ug_{t_i-i}^ {t_i-j+1} (x+1) = \ug_{t_i-i}^ {t_i-j+1}  x+1 $, so for $j=0$, we obtain  $n_{i+1}+1 = \ug_{t_i-i}^ {t_i+1}  x+1 $, thus $n_{i+1}  = \ug_{t_i-i}^ {t_i+1}  x  $.\smallskip

\noindent \pref{itGoodIndThree} We have that $ \ug_{t_i} n_i$ is not a multiple of $b_{i+1}$ by Lemma~\ref{lemmULimit}\pref{itULimitMin}, so
 $[n_{i+1}]_{b_{i+1}} = [ \ug_{t_i} n_{i}]_{b_{i+1}}-1 = [  n_{i}]_{b_{i}}-1 $, where the last equality uses Lemma~\ref{lemmMonUgRem}\pref{itMonUgOne}.
We thus have
\begin{align*}
t_{i+1} &= t_i+1 \geq 2i+ [  n_{i}]_{b_{i}} + 2  = 2(i+1) +[n_{i+1}]_{b_{i+1}} +1 .
\end{align*}

\noindent \pref{itGoodIndFour} 
From $n_i\leq m_{t_i}$ and monotonicity (Lemma~\ref{lemmMonUg}\pref{itMonUgOne}), we obtain
\[n_{i+1} = \ug_{t_i} n_i-1 \leq \ug_{t_i} m_{t_i} - 1 = m_{t_i+1} = m_{t_{i+1}}. \]

\medskip

\item ($u_{t_i }(n_i ) $ is a  limit).
Then, $s_0 \leq  n_i - i-3$.
For $j>1$, define $s_j  = \ug_{t_i+1}^{t_i+j} s_0$.
Set $t_{i+1} = t_i+ i+3$ and $n_{i+1}=s_{i+2}$.\smallskip

\noindent \pref{itGoodIndOne} Clearly, $t_{i+1} > t_i $.
\smallskip

\noindent \pref{itGoodIndOneb} This holds in view of Proposition~\ref{propPresU}, given that $s_{0}$ is in $u_{t_{i }+1}$-normal form, so induction shows that each $s_j$ (including $s_{i+2}=n_{i+1}$) is in $u_{t_{i }+j}$-normal form.
\smallskip

\noindent \pref{itGoodIndTwo} Proposition \ref{propPresU} and induction on  $j$  yield
\[\xi \llbracket i+1\rrbracket = u_{t_i}(n_i) [i+1] =  u_{t_i +1}(s_0) = u_{t_i +j}(s_j) = u_{t_{i+1}}(n_{i+1}) .\]

\noindent \pref{itGoodIndTwob}
This holds since $n_{i+1} = \ug_{t_i+2}^{t_i+i+3} s_1$.\footnote{Note that we purposefully use $s_1$ and not $s_0$ in this step, since the latter would force us to set $t_{i+1} = t_i+i+2$, which is too small for the subsequent \pref{itGoodIndThree}.}
 \smallskip

\noindent \pref{itGoodIndThree}
By Lemma~\ref{lemmZetaIota}\pref{itZetaIiotaC}, $u_{t_i}(n_i) [i+1] = \om\al+r $ for some $r\leq i+1$.
Lemma~\ref{lemmOuroFS}\pref{itOuroFSSucc} applied $r$ times yields $n_{i+1} = p+r $ with $u_{t_{i+1} }(p) = \om\al$, and thus by Lemma~\ref{lemmULimit}\pref{itULimitMin}, $p$ is a multiple of $\min \odh_{t_{i+1}} =b_{i+1}$.
We conclude that $[n_{i+1}]_{b_{i+1}} = r \leq i+1$.
Then, since $t_i\geq 2i+1$,
\[t_{i+1} = t_i+ i+3 \geq 3i +4 =3(i+1)+1 \geq 2(i+1) + [n_{i+1}]_{b_{i+1}} +1.\]
 \smallskip

\noindent \pref{itGoodIndFour} We show inductively using Lemma~\ref{lemmMonUg}\pref{itMonUgOne} that for $0\leq j\leq i+2$, $s_{j }  \leq m_{t_i+j} - i-2  +j $.
For the base case, we use \pref{itGoodIndFour} to see that $n_i\leq m_{t_i}$, and thus using Lemma~\ref{lemmMonUg}\pref{itMonUgOne},
\begin{align*}
s_{0}    \leq  \ug_{t_i }n_i - i-3  
  \leq \ug_{t_i} m_{t_i} - i-3  =  m_{t_i+1} - i-2.
\end{align*} 
For the inductive step,
\begin{align*}
s_{j+1} & = \ug_{t_i+j} s_{j} \leq  \ug_{t_i+j}(m_{t_i+j} - i-2  +j) \\
& \leq \ug_{t_i+j} m_{t_i+j} - i-2  +j =  m_{t_i+j+1} - i-2  +j+1.
\end{align*} 
Setting $j=i+2$, we see that $n_{i+1} = s_{i+2} \leq m_{t_i+i+2} = m_{t_{i+1}}$.\qedhere
\end{Cases}
\end{proof}

\section{Finite Base Hierarchies}

The proof of Theorem~\ref{theoGoodInd} uses a dynamical hierarchy whose base hierarchies (aside from the first) are infinite.
One may wonder if the use of infinite base hierarchies is essential.
The answer is {\em no;} here we will briefly sketch how our constructions should be modified to show this.
Below, say that a {\em finite dynamical hierarchy} is a dynamical hierarchy $\mathcal B$ such that $\mathcal B_i$ is finite for all $i$.

\begin{customthm}{\ref{theoGoodInd}'}\label{theoPrime}
$\sf KP$ does not prove that if $\mathcal B$ is any finite dynamical hierarchy and $m\in\mathbb N$, then there is $i\in \mathbb N$ such that $\G^{\mathcal B}_i(m) = 0$.
\end{customthm}

\begin{proof}[Proof sketch]
Fix $m\in \mathbb N$; we will modify the construction of Definition~\ref{defCanon} to obtain a finite dynamical hierarchy $\mathcal D' = \mathcal D'(m)$ such that $\mathbb G^{\mathcal D'}_i(m ) =\mathbb G^{\odh_i}(m ) $ for all $i$.
To do this, let $\mathcal D'_0 = 2$ and $k_0 = m$.
Then, using the notation of Definition~\ref{defCanon}, define inductively $\mathcal D'_{i+1} = (\mathcal D'_i)^+_{k_i} $ and $k_{i+1} =\ug_{\mathcal D'_i}^{\mathcal D'_{i+1}}k_i$; Lemma~\ref{lemmRestrict} and induction show that indeed $\mathbb G^{\mathcal D'}_i(m ) =\mathbb G^\odh_i(m ) $ for all $i$.
Setting $m=\se 2{n}+1$, it follows that $\mathbb G^{\mathcal D'}_i(m) = 0 $ implies that $i>F(n)$, so the statement is not provable in $\sf KP$.
\end{proof}

Note, however, that $\mathcal D'$ depends on $m$, so one may still ask if there is a {\em single} finite dynamical hierarchy which yields independence.
Indeed, such a dynamical hierarchy can be constructed by diagonalization.

\begin{customthm}{\ref{theoGoodInd}$^\star$}\label{theoPlus}
There is a finite, computable dynamical hierarchy $\mathcal D^\star$ such that $\sf KP$ does not prove that if $m\in\mathbb N$, then there is $i\in \mathbb N$ such that $\G^{\mathcal D}_i(m) = 0$.
\end{customthm}

\begin{proof}[Proof sketch]
Similar to the above construction, but now set $\mathcal D^\star_0 = \{2\}$ and $k_0 = 2$, then $\mathcal D^\star_{i+1} = (\mathcal D^\star_i)^+_{k_i} $ and $k_{i+1} =\se{\max \mathcal D^\star_{i+1}}{i+1}$.
Then, $u_{\mathcal D^\star_i}(k_i) = \psi(\se \Om i) $.
Now, let $m = k_{i+1}+i$.
Since $\ug_ j  x -1\geq x-1 $ holds for all $x$ and $j$, one readily checks that $\mathbb G^{\mathcal D^\star}_i(m ) \geq k_{i+1}$.
Then, we can reason as in the proof of Theorem~\ref{theoGoodInd} to see that if $\mathbb G^{\mathcal D^\star}_\ell (m) = 0$, then $\ell \geq F(i)$.
\end{proof}

\section{Concluding remarks}\label{secConc}

We have exhibited a Goodstein process based only on elementary functions, yet independent of very strong theories.
This opens a new pathway to obtaining powerful extensions of Goodstein's classic result.
A Goodstein principle of $\Gamma_0$-strength (much weaker than $\vartheta(\ve_{\Om+1})$) is obtained by using a version of the Ackermann function instead of the exponential, but with singleton base hierarchies~\cite{FSGoodstein}; it would be interesting to see if the combination of the two approaches leads to substantially more powerful principles, thus avoiding the use of Hardy functions in the search for ever more powerful Goodstein processes.

A second direction for future work lies in optimizing of our independence result.
The primary source of complexity of fractal Goodstein processes lies in the base hierarchies involved; we have already established that only finitely many bases must be added at each step, but a simpler process may be obtained by adding bases more sparingly.
Indeed, we added new bases for each $n$ that is a multiple of its own base, but as observed in Remark~\ref{remOmCof}, this does not necessarily lead to $U_B(n)$ having uncountable cofinality.
We could instead consider a dynamical base hierarchy which only adds new bases when $\tau(U_B(n))=\Om$, which would lead to independence with base hierarchies smaller than the greedy ones used.
We have only resisted this approach due to it being less number-theoretically natural, but our proofs readily apply to such a variation.
However, just how sparse we can make the base hierarchies while maintaining independence remains an interesting open question, possibly leading to phase transition results~\cite{MeskensWeiermann}.

Finally, there is the issue of optimizing the ordinal assignment(s).
Our goal was to keep ordinal calculations as simple as possible, which are already non-trivial with our approach.
As we have mentioned, a tight ordinal interpretation would lead to more cumbersome computations.
It would moreover not improve on the independence results for $\sf KP$, as these are not sensitive to the difference between $\vartheta$ and $\psi$.
However, more fine-grained independence results could be obtained by a precise ordinal interpretation, such as independence for weaker theories with respect to restricted versions of our fractal Goodstein process, so a more refined ordinal analysis may be a viable direction for future work.
For now, we find that the novelty of a two-interpretation approach has greater added value, as we predict that it will become indispensable as Goodstein principles are pushed farther beyond the limits of predicativity.

\subsection*{Acknowledgements}

We thank Milan Morreel for many insightful comments and suggestions which have helped improve the article.

\bibliographystyle{amsplain}
\bibliography{biblio}

\end{document}